\lstdefinelanguage{maple}
	{morekeywords={true, false, try, catch, return, break, error, 
	               module, export, local, option, in, use,
                 and, or, not, xor, xnor,
                 if, then, elif, else, fi,
                 while, for, from, by, to, do, od,
                 proc, nargs, local, global, end, NULL}}
\renewcommand\r{\mathbf{r}}
\newcommand\s{\mathbf{s}}
\newcommand\vone{\mathbf{1}}
\tikzset{partition/.style={fill,circle,inner sep=1pt},
         part/.style={baseline=0,scale=0.5,bend left=45},
         partlabel/.style={below}}
\tikzstyle{pnt}=[draw,ellipse,fill,inner sep=1pt]
\tikzstyle{opnt}=[ ]
\tikzstyle{pntt}=[draw,ellipse,fill,inner sep=0.5pt]
\tikzstyle{point}=[draw,ellipse,fill,inner sep=2pt]
\newtheorem{theorem}{Theorem}[section]
\newtheorem{prop}[theorem]{Proposition}
\newtheorem{corollary}[theorem]{Corollary}
\newtheorem{conjecture}[theorem]{Conjecture}
\newcommand\bt{\begin{tabular}}
\newcommand\et{\end{tabular}}
\newtheorem{defn}{Definition}
\newtheorem{claim}{Claim}
\newtheorem*{example}{Example}
\newcommand{\Succ}{\operatorname{succ}}
\newcommand{\diagpi}{ \foreach \i in {1,...,9}
        \node[pnt,label=below:$\i$] at (\i,0)(\i) {};
   \node[opnt] at (10,1)(1a){};
   \node[opnt] at (10,1.5)(2a){};
   \node[opnt] at (10,2)(3a){};
   \draw(1)  to [bend left=45] (3);
   \draw(3)  to [bend left=45] (3a);
   \draw(4)  to [bend left=45] (6);
   \draw(5)  to [bend left=45] (2a);
   \draw(7)  to [bend left=45] (1a);
   \draw(8)  to [bend left=45] (9);}
\title[$k$-nonnesting partitions and permutations]{A generating tree approach to\\ $k$-nonnesting partitions and permutations}
\author{Sophie Burrill}
\author{Sergi Elizalde}
\address[S.~Elizalde]{Department of Mathematics, Dartmouth College, Hanover, NH 03755, USA}
\email{sergi.elizade@dartmouth.edu}
\author{Marni Mishna}
\author{Lily Yen}
\address[S.~Burrill, M.~Mishna, L.~Yen]{Department of Mathematics, Simon Fraser University, Burnaby, BC, Canada}
\address[L.~Yen]{Department of Mathematics and Statistics, Capilano University, North Vancouver, BC, Canada} 
\thanks{The authors are grateful to the NSERC Discovery Grant Program and the Pacific Institute of
  Mathematical Sciences (Canada) for facilitating this collaboration. The second author was also partially supported by grant DMS-1001046 from the NSF and by grant \#280575 from the Simons Foundation.
}
\begin{document}
\begin{abstract}
We describe a generating tree approach to the enumeration and exhaustive
generation of $k$-nonnesting set partitions and permutations. Unlike previous work in the literature
using the connections of these objects to Young tableaux and restricted lattice walks, our
approach deals directly with partition and permutation diagrams. We provide explicit functional equations for the generating functions, with $k$ as a parameter.
\end{abstract}

\subjclass[2000]{Primary 05A15; Secondary 05A18}
\maketitle

\section{Introduction}\label{sec:intro}
An \emph{arc diagram} representation of a combinatorial class is
useful for highlighting the presence of certain kinds of patterns. It
is a graphical representation with labelled vertices, ordered along a
row. Different classes impose different restrictions on the allowable
edges and the vertex degrees. Some examples include matchings, set
partitions, permutations, and \emph{tangled diagrams}, which model RNA
secondary structures~\cite{ChQiRe08,BuMiPo10,Chetal07,ChHaRe09,deMi06,deMi07,JiQiRe08}. This representation can
facilitate random generation, enumeration, and the analysis of two key
pattern families, \emph{nestings\/} and \emph{crossings}.

Two arcs $(i_1, j_1)$ and $(i_2, j_2)$ \emph{nest} if
$i_1<i_2<j_2<j_1$. These two arcs \emph{cross} if
$i_1<i_2<j_1<j_2$. Many authors are interested in the total number of
crossings, and the case of exactly zero crossings seems to be
particularly compelling, across various structures, as we describe in
a moment.  A stronger notion is that of $k$-nesting pattern. A set of
$k$ arcs forms a \emph{$k$-nesting} if each of the $\binom{k}{2}$ pairs of
arcs nest. The definition of $k$-crossing is
analogous. Figure~\ref{fig:3nest} illustrates a $3$-nesting and a
$3$-crossing. This notion is slightly modified in the case of
permutations, as is described in Section~\ref{sec:permutations}. A diagram is
said to be \emph{$k$-nonnesting} if it does not contain a $k$-nesting
pattern. Again, we define a \emph{$k$-noncrossing} diagram analogously.

\begin{figure}[ht]
\begin{tikzpicture}[scale=0.6]
\node[pnt] at (0,0)(1){};
\node[pnt] at (1,0)(2){};
\node[pnt] at (2,0)(3){};
\node[pnt] at (3,0)(4){};
\node[pnt] at (4,0)(5){};
\node[pnt] at (5,0)(6){};
\node[pnt] at (10,0)(1b){};
\node[pnt] at (11,0)(2b){};
\node[pnt] at (12,0)(3b){};
\node[pnt] at (13,0)(4b){};
\node[pnt] at (14,0)(5b){};
\node[pnt] at (15,0)(6b){};
\draw(1)  to [bend left=45] (6);
\draw(2)  to [bend left=45] (5);
\draw(3)  to [bend left=45] (4);
\draw(1b)  to [bend left=45] (4b);
\draw(2b)  to [bend left=45] (5b);
\draw(3b)  to [bend left=45] (6b);
\end{tikzpicture}
\caption{A $3$-nesting and a $3$-crossing.}
\label{fig:3nest}
\end{figure}
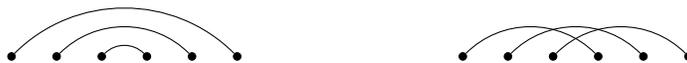

\subsection{Set partitions}
The arc diagram approach to the study of set partitions in
combinatorics was originally dominated by the study of noncrossing
diagrams~\cite{DeZa86,Si00,Kim11}, but the very novel, and
extremely robust bijection of Chen, Deng, Du, Stanley and
Yan~\cite{Chetal07} between matchings with maximum nesting size $k$
to matchings with maximum crossing size $k$ led to a very rich area of
study. The authors adapt the bijection to handle set partitions in the
same article, and later authors adapted the argument to graphs and
permutations. The heart of the argument maps the objects to a sequence
of Young tableaux, and the bijection is achieved by taking the transpose
of the tableaux.

From the enumerative point of view, results have been far less
forthcoming. The enumeration of \mbox{$k$-nonnesting} matchings was
completed by Chen \emph{et al.} in their master work by using a
bijection to collections of noncrossing Dyck paths whose
enumeration was known. Consequently, the results are elegant. Also,
the class of $2$-noncrossing (or simply noncrossing) partitions is
counted by Catalan numbers, and so one could hope for a similarly
beautiful enumeration scheme. So far, only exact formulas exist for
$3$-noncrossing set partitions, and beyond that, a far more complicated
structure is conjectured.  Specifically, Bousquet-M\'{e}lou and
Xin~\cite{BoXi05} did a complete functional equation analysis, and
determined explicit exact and asymptotic enumeration
formulas. Although their functional equations can be adapted to
enumerate set partitions with higher noncrossing numbers, they
conjecture that the structure of the generating function becomes more
complicated.
\begin{conjecture}[Bousquet-M\'elou,Xin 2005~\cite{BoXi05}]
\label{conj:nonDfin}
For every $k>3$, the generating function of $k$-noncrossing set partitions is not D-finite.
\end{conjecture}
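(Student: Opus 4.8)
We outline a possible approach to Conjecture~\ref{conj:nonDfin}. Since $k$-noncrossing and $k$-nonnesting set partitions are equinumerous \cite{Chetal07}, it suffices to treat $k$-nonnesting partitions, for which the functional equations derived in this paper (with $k$ as a parameter) are at our disposal; let $P_k(t)$ denote the corresponding generating function, so that $P_3(t)$ is D-finite by \cite{BoXi05} and the task is to show that $P_k(t)$ is not D-finite for $k\ge 4$. The plan is \emph{not} to argue through the growth of the coefficients: via the classical correspondence between $k$-nonnesting partitions and vacillating tableaux of height at most $k-1$, and thence with lattice walks confined to a Weyl chamber of dimension $k-1$, one expects $[t^n]P_k(t)\sim c_k\,\gamma_k^{\,n}\,n^{-\alpha_k}$ with $\gamma_k$ and $\alpha_k$ rational, which is entirely compatible with D-finiteness. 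Instead the strategy is to show that $P_k(t)$, while having a perfectly tame dominant singularity, nonetheless has \emph{infinitely many} singularities in the complex plane, a property no D-finite function can have.

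Concretely, the first step is to rewrite the functional equation in kernel form: after introducing the catalytic variables $\mathbf x$ that record the ``profile'' of a partially built diagram, one has $K_k(t;\mathbf x)\,F_k(t;\mathbf x)=R_k(t;\mathbf x)$, where $F_k$ specializes to $P_k$ and $K_k$ is the kernel. To $K_k$ one attaches the group $\mathcal{G}_k$ of birational transformations of $\mathbf x$-space generated by the involutions that leave $K_k$ invariant. The core of the argument is then a dichotomy in the spirit of the theory of quarter-plane walks: (i) show that $\mathcal{G}_k$ is finite exactly when $k\le 3$ and infinite when $k\ge 4$ (for small $k$ a finite check; in general one expects this from the structure of the associated walk); and (ii) when $\mathcal{G}_k$ is infinite, analytically continue the solution $F_k$ along the kernel variety and prove that the continuation is ramified over infinitely many points, so that the pertinent specialization $P_k(t)$ acquires infinitely many singularities and therefore cannot satisfy a linear differential equation with polynomial coefficients.

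The main obstacle is that step (ii) is, at present, carried out rigorously only when there are exactly two catalytic variables: the kernel is then a curve of genus at most one, one uniformizes it by elliptic (or rational) functions, and the ramification of the continuation is read off from the action of $\mathcal{G}_k$ on the resulting Riemann surface --- this is the method of Kurkova--Raschel and of Bernardi--Bousquet-M\'elou--Raschel for walks in the quarter plane. For $k\ge 4$ the kernel involves the ``profile'' of a Young-diagram shape with up to $k-1$ rows --- a cone of dimension $k-1\ge 3$ --- and no such uniformization is available; supplying the analytic continuation and the infinitely-many-branch-points argument in this higher-dimensional setting, or else exhibiting a two-variable projection of the problem that is already non-D-finite, is the crux of a full proof. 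A secondary difficulty is that even the asymptotic statement above, which one needs in order to pin down the dominant singularity, requires the Denisov--Wachtel theory of walks in cones together with positivity of the associated discrete harmonic function in a non-convex chamber. As heuristic support --- not a proof, since there is no a priori bound on the order or degree of a putative annihilating operator --- one can, for each small fixed $k$, compute several hundred coefficients of $P_k(t)$ from the functional equation and verify by Hermite--Pad\'e approximation that no linear ODE of modest order and degree annihilates the series, which makes the dichotomy above the natural route toward settling the conjecture.
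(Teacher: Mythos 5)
There is no proof to compare against: the statement you were asked about is Conjecture~\ref{conj:nonDfin}, due to Bousquet-M\'elou and Xin, and it is \emph{open}. The paper does not prove it --- it only supplies indirect evidence, namely functional equations for $k$-nonnesting partitions (equinumerous with the $k$-noncrossing ones by \cite{Chetal07}) and coefficient data from which no differential equation could be fitted, exactly the kind of heuristic support you mention in your last paragraph. So the correct response to this statement is not a proof attempt but a recognition that it is conjectural.

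Your proposal, taken on its own terms, is a research program rather than a proof, and you concede the decisive gaps yourself: step (i), the claim that the group of the kernel is finite exactly when $k\le 3$, is not established for general $k$; and step (ii), the analytic continuation along the kernel variety producing infinitely many singularities, is only available in the two-catalytic-variable setting (genus $\le 1$ kernel curves, elliptic uniformization), whereas the equations here --- such as the one in Corollary~\ref{cor:kpart} --- involve $k-1$ or more catalytic variables for $k\ge 4$, with no known substitute for the uniformization. Absent both of these, nothing in the argument excludes D-finiteness; indeed, as you note, the expected asymptotics $c_k\gamma_k^n n^{-\alpha_k}$ are perfectly compatible with a D-finite generating function, so the singularity-counting route is the only part of the sketch that could in principle close the argument, and it is precisely the part that is missing. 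The proposal is a reasonable statement of why the conjecture is plausible and of where the difficulty lies, but it does not prove the statement, and no proof of it exists in the paper or, to date, in the literature.
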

Mishna and Yen~\cite{MiYe13} determined functional equations for
$k$-non\-nest\-ing set partitions, and described a process for isolating
coefficients, giving additional evidence for
Conjecture~\ref{conj:nonDfin}.

A more recent development  considers set partitions where the
maximum nesting size and the maximum crossing size are controlled
simultaneously. The resulting generating functions are rational~\cite{Marb13}, and
(in theory) can be determined explicitly. They can be summed
together for a different picture of generating functions
for~$k$-nonnesting partitions.

\subsection{Permutations}
The arc diagram is a convenient way to view permutations, as it
simultaneously highlights both the cycle structure, and the line
notation structure. Corteel~\cite{Cort07}
used arc diagrams to directly connect various permutation statistics,
such as exceedences and permutation patterns
to  occurrences of nestings and crossings. Burill, Mishna and
Post~\cite{BuMiPo10} extended this definition, and directly adapted
the results of Chen~\emph{et al.\/} to prove that $k$-nonnesting
permutations are in bijection with $k$-noncrossing permutations. They
computed enumerative data by brute force, by finding the nesting and crossing numbers
of each permutation. Consequently, the data is
restricted to small sizes only. The generating function for permutations
in which both the maximum nesting and maximum crossing numbers are controlled is rational~\cite{Yen13FPSAC},
similarly to the case of set partitions.

Extending the view of Corteel, we note that~$k$-nestings indicate
decreasing subsequences, and indeed some of our techniques here echo
studies of longest decreasing subsequences in permutations, especially
the recent work of Bousquet-M\'elou~\cite{Bous11}.

\subsection{Other classes}
Other classes have been enumerated, notably tangled
diagrams~\cite{CQR08} and RNA pseudoknots~\cite{MaRe08,SRSD11}, using
techniques related to those used for the enumeration of matchings.

\subsection{Generating trees}
Generating trees are a classic combinatorial tool,
introduced by West~\cite{West95} in the study of
pattern-avoiding permutations. Matchings, set partitions, and
permutations can all be described and generated using
generating trees, and the constructions translate directly to
generating function equations. In this strategy, each object in the
class is assigned a label (usually a natural combinatorial parameter),
and succession rules are given using a finite description process. The
class is then generated from some root, to which succession rules are
iteratively applied. The point of view we take here is best described
in~\cite{BaBoDeFlGaGo02}, although other examples of systems, such as
the ECO method~\cite{BrDePePi99} are related.  Our work most
closely resembles the recent generating tree approach of
Bousquet-M\'elou~\cite{Bous11} mentioned just above, as we use vectors
as labels, and the length of the vector comes directly from our
parameter of interest, the nesting number.

In order to translate a generating tree into generating function equations,
we demonstrate the following:
\begin{enumerate}
\item the generating tree construction generates each object uniquely;
\item each object $\pi$ is given a label $\ell(\pi)$;
\item the labels of the children of $\pi$ are completely determined by
  the label $\ell(\pi)$.
\end{enumerate}
Perhaps surprisingly, we are able to do this in all of the structures that we consider in this paper.

\subsection{Main results}
Our main contribution is a generating tree description of
$k$-nonnesting set partitions and $k$-nonnesting permutations, for
arbitrary $k$. Unlike previous enumerative analyses of these
objects, we do not make use of bijections with Young tableaux nor
lattice paths.  Our construction leads to a functional equation for
the generating function which we use to generate terms in the series.
We describe generic functional equations in a number of variables
which grows with~$k$. Thus far we have been unable to solve these
functional equations as our attempts to apply kernel methods have been obstructed
by the lack of symmetry in our description.

The key innovation in this study is a new class of structures that are
essentially arc diagrams ``under construction,'' which we call
\emph{open arc diagrams}. These are arc diagrams in which we allow
semi-arcs with a single endpoint. Usual arc diagrams then form the
subclass of diagrams with no semi-arcs.  We identify a minimal pattern to
avoid which prevents occurrences of $k$-nestings in the complete
diagrams.

Although our enumerative techniques are  subject to restrictions of
computer memory, they are sufficient to form the basis of some
conjectures: we have a precise conjecture on the asymptotic growth of
many $k$-nonnesting arc diagram families, including set partitions,
and permutations, and open diagram classes. This conjecture implies
that, asymptotically, the number of $k$-nonnesting open
diagrams divided by the number of $k$-nonnesting closed diagrams is
subexponential~\cite{Burr14}. This fact, which is not evident by looking only at small
sizes, implies that our open diagram construction is efficient
for large~$n$.

We also conjecture from our numerical data that there is a bijective correspondence between
open arc diagrams related to $3$-nonnesting set partitions and Baxter
permutations (Conjecture~\ref{conj:baxter}). This class does not
obviously posess the same symmetries as other classes counted by the
same sequence, and the generating tree is quite different from all
previously known examples. This is intriguing, although it complicates
the search for an easy combinatorial bijection.

\subsection{Organization of the paper}
We describe open arc diagrams for the case of set partitions and their generating trees in
Section~\ref{sec:openarc}. We also provide some basic results on
converting these generating trees to functional equations for the
generating function.

In Section~\ref{sec:setpartitions3} we give the first application of this construction
to determine a set of functional equations to enumerate~$3$-nonnesting
set partitions. The goal of this section is to provide a detailed
description of the method.

Section~\ref{sec:setpartitionsk} contains the construction for
$k$-nonnesting set partitions for general~$k$, and also the construction
for set partitions avoiding a related pattern, called enhanced
$k$-nestings.

In Section~\ref{sec:permutations}, using similar ideas and combining
the descriptions for usual and enhanced nestings in set partitions, we
construct a generating tree for $k$-nonnesting permutations and obtain
analogous functional equations.

Finally, in Section~\ref{sec:perspectives} we discuss applications of
the generated enumerative data, and how this construction can be
modified to consider other arc diagram families, in particular RNA
secondary structures.

\section{Open arc diagrams: An object under construction}
\label{sec:openarc}

In this section we introduce a generalization of arc diagrams in which not all arcs have a right endpoint. These diagrams are then used as our basic example to illustrate how generating trees are constructed to  generate a combinatorial class
and to obtain an equation for its generating function. We conclude the section by defining nestings in these generalized arc diagrams.

\subsection{Arc diagrams}
An arc diagram of size $n$ is an embedded graph with vertices 1 to~$n$
drawn in an increasing row. We take the convention of labelling our
diagrams from left to right, so we can refer to left and right
endpoints of an arc.  We first focus on the case of arc diagrams
representing set partitions, which we call \emph{partition diagrams}
for short. In this case, the arcs are always drawn above the vertices,
and the partition block~$\{a_1, a_2, \dots, a_j\}$, where
$a_1<a_2<\dots<a_j$, is represented by the arcs~$(a_1, a_2), (a_2,
a_3), \dots, (a_{j-1}, a_j)$.

A vertex in a partition diagram is one of four types:
\smallskip

\begin{center}{\begin{tabular}{lclcl}
{\bf 1. fixed point} &\begin{tikzpicture} \node[pnt] at
    (0,0){}; \end{tikzpicture}& no incident edges;\\
{\bf 2. opener}& \begin{tikzpicture} \node[pnt] at (0,0){}; \draw[bend
    left=45](0,0) to (0.25, 0.25); \end{tikzpicture} & outdegree one,
  the left endpoint of an arc; \\
{\bf 3. transitory} &\begin{tikzpicture}\node[pnt] at (0,0){};\draw[bend
    left=45](-0.25, 0.25) to (0,0); \draw[bend left=45](0,0) to (0.25,
    0.25);\end{tikzpicture}& indegree and outdegree one, \\ &&the right endpoint of one arc and the left
endpoint of another;\\
{\bf 4. closer}&\begin{tikzpicture}\node[pnt] at (0,0){}; \draw[bend
    left=45](-0.25, 0.25) to (0,0);\end{tikzpicture}& indegree one, the right endpoint of an arc.
\end{tabular}}
\end{center}
\smallskip
Figure~\ref{fig:partition} shows the
diagram of a partition of $\{1, \dots, 9\}$, and illustrates the
different types of vertices.

\begin{figure}
  \begin{tikzpicture}[scale=0.5]
   \foreach \i in {1,...,9}
        \node[pnt,label=below:$\i$] at (\i,0)(\i) {};
   \draw(1)  to [bend left=45] (3);
   \draw(3)  to [bend left=45] (5);
   \draw(4)  to [bend left=45] (6);
   \draw(7)  to [bend left=45] (8);
   \draw(8)  to [bend left=45] (9);
   \end{tikzpicture}
\caption{The arc diagram representation of the partition $\{1,3,5 \}\{2\}\{4,6\}\{7,8,9\}$. The vertex
labelled $2$ is a \emph{fixed point}; the vertices labelled $1$, $4$, and $7$
are \emph{openers}; vertices $3$ and $8$
are \emph{transitories}; and $5$, $6$, and $9$ are \emph{closers}. }
\label{fig:partition}
\end{figure}
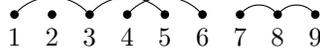
%

\subsection{Open arc diagrams}
To generalize partition diagrams, we allow semi-arcs with a left endpoint,
but no right endpoint. Otherwise said, we allow opener and
transitory vertices with incident arcs that are not `closed', which we call
open semi-arcs or simply semi-arcs. We call these generalized diagrams
\emph{open partition diagrams}.  To ensure a standard representation,
we draw the semi-arcs to a vertical line to the right of vertex $n$,
and retain their order, not allowing the semi-arcs to intersect. We
denote the semi-arc with left endpoint $i$ by
$(i,*)$. Figure~\ref{fig:halfarc} contains an example.

A diagram with no semi-arcs represents a usual set partition. We call
such a diagram a \emph{complete} (partition) diagram.  We view an open
partition diagram as a future set partition, or a set partition in
progress, specifically in a process which incrementally adds vertices
in numerical order, possibly closing semi-arcs and/or opening new
ones. The open partition diagram $\pi$ in Figure~\ref{fig:halfarc} is
an ancestor to the two set partitions represented in
Figure~\ref{fig:descendantshalfarc}, in this process among infinitely
many others.
\begin{figure}
  \begin{tikzpicture}[scale=0.3]
   \node at (-1,1){$\pi=$};
   \foreach \i in {1,...,9}
        \node[pnt,label=below:$\i$] at (\i,0)(\i) {};
   \node[opnt] at (10,1)(1a){};
   \node[opnt] at (10,1.5)(2a){};
   \node[opnt] at (10,2)(3a){};
   \draw(1)  to [bend left=45] (3);
   \draw(3)  to [bend left=40] (3a);
   \draw(4)  to [bend left=45] (6);
   \draw(5)  to [bend left=40] (2a);
   \draw(7)  to [bend left=40] (1a);
   \draw(8)  to [bend left=45] (9);
\end{tikzpicture}

\begin{tikzpicture}[scale=0.4]
   \foreach \i in {1,...,12}
        \node[pnt,label=below:$\i$] at (\i,0)(\i) {};
   \draw(1)  to [bend left=45] (3);
   \draw(3)  to [bend left=45] (12);
   \draw(4)  to [bend left=45] (6);
   \draw(5)  to [bend left=45] (11);
   \draw(7)  to [bend left=45] (10);
   \draw(8)  to [bend left=45] (9);
\end{tikzpicture}
\hspace{2cm}
\begin{tikzpicture}[scale=0.4]
   \foreach \i in {1,...,13}
        \node[pnt,label=below:$\i$] at (\i,0)(\i) {};
   \draw(1)  to [bend left=45] (3);
   \draw(3)  to [bend left=45] (10);
   \draw(4)  to [bend left=45] (6);
   \draw(5)  to [bend left=45] (11);
   \draw(7)  to [bend left=45] (12);
   \draw(8)  to [bend left=45] (9);
   \draw(10) to [bend left=45](13);
\end{tikzpicture}
\caption{An example of an open partition diagram, $\pi$, and two set partitions with $\pi$ as an ancestor.}
\label{fig:descendantshalfarc}
\label{fig:halfarc}
\end{figure}
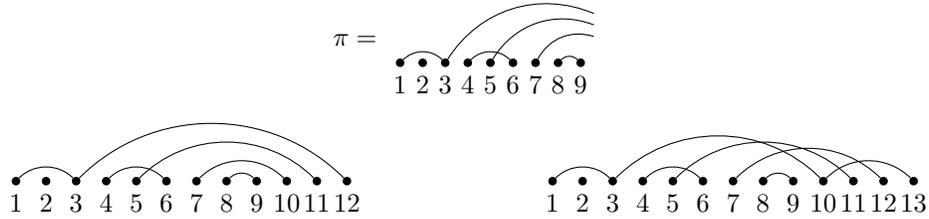

An open partition diagram could also be viewed as a set partition in
which each block is coloured one of two colours: one for proper
blocks, i.e. fixed points or those that end in a closer, and another for blocks ending with a semi-arc. For example,
$\pi$ above represents the bicoloured partition $\mathbf{\{1,3\}},
\{2\}, \{4,6\}, \mathbf{\{5\}, \{7\}}, \{8,9\}$, where blocks written
in bold face are those that end in a semi-arc, and normal fonts
indicate proper blocks.

\subsection{Generating tree for open partition diagrams}
\label{sec:openarcgentree}
The generating tree construction in this case is straightforward.
Given an open partition diagram with $n$ vertices, the added vertex $n+1$ can be  any
of the four kinds, with the caveat that it can be a closer or transitory only if there
is an existing semi-arc to be closed.

For example, the open partition diagram in Figure~\ref{fig:halfarc} generates the 8
diagrams in Figure~\ref{fig:childrenhalfarc}, which we call its children.
\begin{figure}\centering
\begin{tikzpicture}[scale=0.25]
 \diagpi
\end{tikzpicture}
\centerline{\Huge$\downarrow$}
\begin{tikzpicture}[scale=0.25]
   \foreach \i in {1,...,10}
        \node[pnt,label=below:$\i$] at (\i,0)(\i) {};
   \node[opnt] at (11,1)(1a){};
   \node[opnt] at (11,1.5)(2a){};
   \node[opnt] at (11,2)(3a){};
   \node[opnt] at (11,2.5)(4a){};
   \draw(1)  to [bend left=45] (3);
   \draw(3)  to [bend left=45] (3a);
   \draw(4)  to [bend left=45] (6);
   \draw(5)  to [bend left=45] (2a);
   \draw(7)  to [bend left=45] (1a);
   \draw(8)  to [bend left=45] (9);
\end{tikzpicture}\\
\begin{tikzpicture}[scale=0.25]
   \foreach \i in {1,...,10}
        \node[pnt,label=below:$\i$] at (\i,0)(\i) {};
   \node[opnt] at (11,1)(1a){};
   \node[opnt] at (11,1.5)(2a){};
   \node[opnt] at (11,2)(3a){};
   \node[opnt] at (11,2.5)(4a){};
   \draw(1)  to [bend left=45] (3);
   \draw(3)  to [bend left=45] (4a);
   \draw(4)  to [bend left=45] (6);
   \draw(5)  to [bend left=45] (3a);
   \draw(7)  to [bend left=45] (2a);
   \draw(8)  to [bend left=45] (9);
   \draw(10) to [bend left=45] (1a);
\end{tikzpicture}\\
\begin{tikzpicture}[scale=0.25]
   \foreach \i in {1,...,10}
        \node[pnt,label=below:$\i$] at (\i,0)(\i) {};
   \node[opnt] at (11,1)(1a){};
   \node[opnt] at (11,1.5)(2a){};
   \node[opnt] at (11,2)(3a){};
   \draw(1)  to [bend left=45] (3);
   \draw(3)  to [bend left=45] (10);
   \draw(4)  to [bend left=45] (6);
   \draw(5)  to [bend left=45] (3a);
   \draw(7)  to [bend left=45] (2a);
   \draw(8)  to [bend left=45] (9);
   \draw(10)  to [bend left=45] (1a);
\end{tikzpicture}
\begin{tikzpicture}[scale=0.25]
   \foreach \i in {1,...,10}
        \node[pnt,label=below:$\i$] at (\i,0)(\i) {};
   \node[opnt] at (11,1)(1a){};
   \node[opnt] at (11,1.5)(2a){};
   \node[opnt] at (11,2)(3a){};
   \draw(1)  to [bend left=45] (3);
   \draw(3)  to [bend left=45] (3a);
   \draw(4)  to [bend left=45] (6);
   \draw(5)  to [bend left=45] (10);
   \draw(7)  to [bend left=45] (2a);
   \draw(8)  to [bend left=45] (9);
   \draw(10)  to [bend left=45] (1a);
\end{tikzpicture}
\begin{tikzpicture}[scale=0.25]
   \foreach \i in {1,...,10}
        \node[pnt,label=below:$\i$] at (\i,0)(\i) {};
   \node[opnt] at (11,1)(1a){};
   \node[opnt] at (11,1.5)(2a){};
   \node[opnt] at (11,2)(3a){};
   \draw(1)  to [bend left=45] (3);
   \draw(3)  to [bend left=45] (3a);
   \draw(4)  to [bend left=45] (6);
   \draw(5)  to [bend left=45] (2a);
   \draw(7)  to [bend left=45] (10);
   \draw(8)  to [bend left=45] (9);
   \draw(10)  to [bend left=45] (1a);
\end{tikzpicture}\\
\begin{tikzpicture}[scale=0.3]
   \foreach \i in {1,...,10}
        \node[pnt,label=below:$\i$] at (\i,0)(\i) {};
   \node[opnt] at (11,1)(1a){};
   \node[opnt] at (11,1.5)(2a){};
   \draw(1)  to [bend left=45] (3);
   \draw(3)  to [bend left=45] (10);
   \draw(4)  to [bend left=45] (6);
   \draw(5)  to [bend left=45] (2a);
   \draw(7)  to [bend left=45] (1a);
   \draw(8)  to [bend left=45] (9);
\end{tikzpicture}
\begin{tikzpicture}[scale=0.3]
   \foreach \i in {1,...,10}
        \node[pnt,label=below:$\i$] at (\i,0)(\i) {};
   \node[opnt] at (11,1)(1a){};
   \node[opnt] at (11,1.5)(2a){};
   \draw(1)  to [bend left=45] (3);
   \draw(3)  to [bend left=45] (2a);
   \draw(4)  to [bend left=45] (6);
   \draw(5)  to [bend left=45] (10);
   \draw(7)  to [bend left=45] (1a);
   \draw(8)  to [bend left=45] (9);
\end{tikzpicture}
\begin{tikzpicture}[scale=0.3]
   \foreach \i in {1,...,10}
        \node[pnt,label=below:$\i$] at (\i,0)(\i) {};
   \node[opnt] at (11,1)(1a){};
   \node[opnt] at (11,1.5)(2a){};
   \draw(1)  to [bend left=45] (3);
   \draw(3)  to [bend left=45] (2a);
   \draw(4)  to [bend left=45] (6);
   \draw(5)  to [bend left=45] (1a);
   \draw(7)  to [bend left=45] (10);
   \draw(8)  to [bend left=45] (9);
\end{tikzpicture}\\
\caption{The open partition diagram for $\pi$ and its eight children.}
\label{fig:childrenhalfarc}
\end{figure}
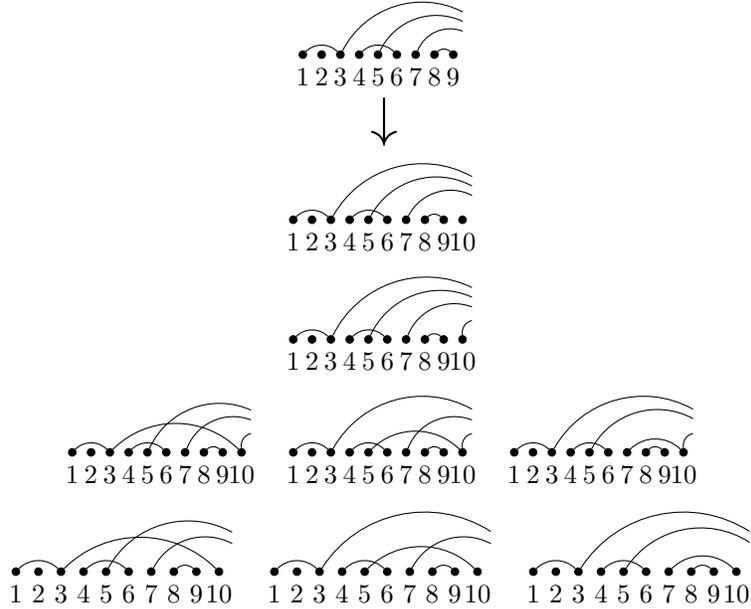
The label of such a diagram is the number of open semi-arcs. This
parameter is sufficient to describe the number of children and all
their labels.  Indeed, suppose a diagram~$\pi$ with~$n$ vertices
has~$\ell(\pi)=m$ semi-arcs (coming from either opener or transitory
vertices). Its number of children and their labels (i.e., number of
semi-arcs) are as follows, depending on the type of the added
vertex~$n+1$:
\begin{description}
\item[1. fixed point] one child with~$m$ semi-arcs;
\item[2. opener] one child with~$m+1$ semi-arcs;
\item[3. transitory] $m$ children, each with $m$ semi-arcs;
\item[4. closer] $m$ children, each with~$m-1$ semi-arcs.
\end{description}
Remark that the last two are trivially empty when~$m=0$.  This sums to
a total of~$2m+2$ children for any diagram with~$m$ semi-arcs. The
fact that the number of children of~$\pi$ and their labels are
completely determined by~$\ell(\pi)$ makes these structures suitable
for generating tree techniques.  The good news is that this property
remains true when we integrate the~$k$-nonnesting constraint.  To
capture the more complex cases, we consider labels which are vectors.

For the above tree, labels have only one component, and the succession
rule is described in the following format:
\begin{equation}\label{eq:easytree}
  [m]\rightarrow
  \begin{array}{llcr}
     ~[m],   & &\quad&\text{(fixed point)}\\[2mm]
     ~[m+1], & &&\text{(opener)}\\[2mm]
     ~\underbrace{[m],[m], \dots,  [m]}_{m\text{ copies}},&\text{if $m>0$,}&&\text{(transitory)}\\[2mm]
     ~\underbrace{[m-1],  [m-1],\dots,   [m-1]}_{m \text{ copies}}, &\text{if $m>0$.}&&\text{(closer)}\\[2mm]
  \end{array}
\end{equation}
We denote the set of labels in the succession rule by
$\Succ([m])$. The root of the tree has label~$[0]$, as the empty set
partition has no semi-arcs.

For comparison, we point out that the rewriting system of Banderier
\emph{et al.} in~\cite{BaBoDeFlGaGo02} uses the parameter ``number of
children'' as the label, which equals~$2m+2$ for a diagram with~$m$
semi-arcs.

\subsection{Converting generating tree specifications to generating
  function equations}
\label{sec:BaseCaseConversion}
Often, the process of translating a generating tree specification to a
generating function equation is quite straightforward. Let
$\mathcal{P}$ be a combinatorial class, and let~$\ell(\pi)=[r_1,
\dots, r_k]$ be the vector label of $\pi\in\mathcal{P}$. If the label
of an object is a vector of length~$k$, we construct a functional
equation using ~$k+1$ variables. The variable~$z$ marks the depth (or
level) of the object in the generating tree, which is given by its
size.  Define the multi-variate generating function
\[
P(\mathbf{u}; z)= \sum_{\pi\in \mathcal{P}} \mathbf{u}^{\ell(\pi)}z^{|\pi|}=\sum_{\pi\in\mathcal{P}} u_1^{r_1} u_2^{r_2}\dots u_k^{r_k}\, z^{|\pi|},
\]
where ${\mathbf u}=(u_1, u_2, \dots, u_k)$, and $|\pi|$ denotes the size
of~$\pi$. In a proper generating tree construction, an
element either is the root~$\pi_0$, or it has a unique parent,
which generates it exactly once. Thus, if $\mathcal{C}(\pi)$ denotes
the set of children of $\pi$, then we deduce the following
relationship:
\[
P(\mathbf{u}; z)= \mathbf{u}^{\ell(\pi_0)} z^{|\pi_0|} + \sum_{\pi\in
  \mathcal{P}} z^{|\pi|+1}\sum_{\pi'\in\mathcal{C}(\pi)}
\mathbf{u}^{\ell(\pi')}.
\]
Recall that $\ell(\pi')$ is determined from $\ell(\pi)$ by definition.
When this correspondence is algebraically straightforward, we can
describe a functional equation.

As we see in the next section, we can treat exponential
generating functions similarly:
\[
\sum_{\pi\in \mathcal{P}} \mathbf{u}^{\ell(\pi)}\,
\frac{z^{|\pi|}}{|\pi|!} = \mathbf{u}^{\ell(\pi_0)} z^{|\pi_0|} + \sum_{\pi\in
  \mathcal{P}} \frac{z^{|\pi|+1}}{(|\pi|+1)!}\sum_{\pi'\in\mathcal{C}(\pi)} \mathbf{u}^{\ell(\pi')}.
\]

\subsection{Exponential generating function for open partition diagrams}
In an open partition diagram, one can distinguish between blocks which
end with a semi-arc and those which do not. The decomposable structure
description of set partitions, which in the methodology
of~\cite{FlSe09} is presented as the labelled class
$\operatorname{Set}(\mathcal{B})$, where
$\mathcal{B}=\operatorname{Set_{card \neq 0}}(\mathcal{Z})$, is easily
modified to accommodate the two types of blocks, giving the class
$\mathcal{P}=\operatorname{Set}(\mathcal{B}+\mathcal{B})$.  From this
combinatorial equation, it is clear that the exponential generating
function for open partition diagrams is $e^{2(e^z-1)}$.

Alternatively, this formula can be obtained by translating the
succession rule~\eqref{eq:easytree} into a differential equation for
the exponential generating function and solving it. To extend to
the~$k$-nonnesting case we persue the strategy of translating the
succession rule into generating function equations. For these cases we
use the equations to generate coefficients rather than solving them.
For this reason, it is more convenient for our purposes to use
ordinary generating functions instead of exponential ones.

Let us now find the bivariate generating function~$P(u,z)$ where the
exponent of $u$ is the label of the node, which equals the number of
open semi-arcs.  Let $\pi$ be an open partition diagram with $|\pi|=n$
vertices and $\ell(\pi)=m$ open semi-arcs.  Recall that its children
$\pi'\in\mathcal{C}(\pi)$ in the generating tree can be of four types,
depending on the type of the added vertex $n+1$, and that the labels
of these children are described by~\eqref{eq:easytree}. It follows
that

\[\sum_{\pi'\in\mathcal{C}(\pi)} u^{\ell(\pi')} =
\underbrace{u^m}_{\text{fixed point}} +
\underbrace{u^{m+1}}_{\text{opener}} +
\underbrace{m\,u^m}_{\text{transitory}} +
\underbrace{m\,u^{m-1}}_{\text{closer}},
\]
which gives the following generating function recurrence:
\begin{eqnarray*}
P(u,z)&=&\sum_{n,m}p(m,n) \frac{u^mz^n}{n!} \\
&=& 1+  \sum_{n,m} p(m,n) \frac{z^{n+1}}{(n+1)!}\left( u^m+u^{m+1}+m u^m
  +m u^{m-1} \right)\\
&=& 1+ \int \left(P(u,z)+ uP(u,z)+ uP_u(u,z)+P_u(u,z)\right)\, dz,
\end{eqnarray*}
where we use the fact that $\int P(z,u)\, dz =\sum p(k,n) \frac{1}{n+1} \frac{z^{n+1}u^k}{n!}$.
Differentiating with respect to $z$ we get
$$P_z(u,z)= (1+u)P_u(u,z) + (1+u)P(u,z) = (1+u)\left(P_u(u,z) + P(u,z)\right),$$
and solving this differential equation gives
$P(u,z)=e^{(1+u)(e^z-1)}$.

\subsection{Nestings and future nestings}
\label{sec:nesting}
Recall that a $k$-nesting in a partition diagram is a set of $k$ mutually nesting arcs, that is, arcs $(i_1, j_1),\dots,(i_{k}, j_{k})$ such that
\[
   i_1<i_2<\dots<i_k<j_k<j_{k-1}<\dots<j_1.
\]
To incorporate the nesting constraint to the generating tree, we need to generalize the notion of $k$-nestings to open partition diagrams.

As before, a \emph{(regular) $k$-nesting} in an open partition diagram is a set of
$k$ mutually nesting (closed) arcs. To handle semi-arcs, we introduce the concept
of a \emph{future $k$-nesting}.

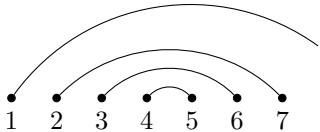
\begin{figure}[htb]
\begin{tikzpicture}[scale=0.6]
   \foreach \i in {1,...,7}
        \node[pnt,label=below:$\i$] at (\i,0)(\i) {};
   \node[opnt] at (8,1)(1a){};
   \draw(1)  to [bend left=45] (1a);
   \draw(2)  to [bend left=45] (7);
   \draw(3)  to [bend left=45] (6);
   \draw(4)  to [bend left=45] (5);
\end{tikzpicture}
\caption{An example of a future $4$-nesting.}
\label{fig:futurenesting}
\end{figure}

\begin{defn}A \emph{\bf future $k$-nesting} is a set of~$k-1$ mutually
  nesting arcs and one semi-arc, such that the left end-point of the
  semi-arc is to the left of the $k-1$-nesting. \end{defn}

An example is drawn in Figure~\ref{fig:futurenesting}. Recall that
since semi-arcs do not intersect, a semi-arc that is above another one
also has its left endpoint further to the left. The following claim
follows trivially.

\begin{claim}\label{claim}
If a semi-arc belongs to a future $k$-nesting, then any semi-arc above it also
belongs to a future $k$-nesting.
\end{claim}

Note that having multiple semi-arcs above a $k-1$-nesting does not imply
$\ell$-nestings in its descendants for $\ell>k$.  For example, the
three semi-arcs in Figure~\ref{fig:halfarc}, together with the arc
under all three of them, become a $4$-nesting in the partition on the left
of Figure~\ref{fig:descendantshalfarc}, but they become three separate
$2$-nestings in the partition on the right.

\begin{defn} An open partition diagram is \emph{$k$-nonnesting} if it
contains neither regular nor future $k$-nestings.
\end{defn}

Note that, in particular, $k$-nonnesting complete partition diagrams are simply $k$-nonnesting partitions, since the
condition about future $k$-nestings is void when there are no semi-arcs.

Our strategy is to pare the general open partition diagram by removing
the vertices which introduce future $k$-nestings, and all of their
children. Next we show that this is sufficient. The main challenge treated by the
remaining sections is to prove that a label exists from which we can
build  the tree.

\begin{prop} \label{prop:subclass} Consider the subclass of open
  partition diagrams generated from the empty diagram (as described in
  Section~\ref{sec:openarcgentree}) left after
  pruning all subtrees of diagrams with a future~$k$-nesting. The
  elements in this class are precisely the $k$-nonnesting open partition diagram.
  In particular, the complete diagrams in this class are precisely the $k$-nonnesting
  partitions.
\end{prop}
\begin{proof}
Any open partition diagram that is a descendant of a diagram with a future $k$-nesting has a
either a $k$-nesting or a future $k$-nesting. Indeed, the only way to remove a future $k$-nesting is by closing its top semi-arc,
which creates a $k$-nesting.

On the other hand, starting from a diagram with a $k$-nesting, and deleting vertices starting from
the right, one can find at least one ancestor diagram that contains a
future $k$-nesting. Thus, in order to obtain all $k$-nonnesting open partition diagrams (and thus all $k$-nonnesting complete partition diagrams),
it is sufficient to generate all open diagrams that avoid future $k$-nestings.
\end{proof}

By generating all $k$-nonnesting open partition diagrams, our process generates a superset of the $k$-nonnesting
partitions. In the generating function equations, we recover the
generating function for our desired class by a variable
specialization.  Mishna and Yen~\cite{MiYe13} gave a construction
which generates only this class, but our construction has two
important advantages over theirs: it can handle the enhanced case (see
Section~\ref{sec:enhanced}), and it can be extended to $k$-nonnesting
permutations (see Section~\ref{sec:permutations}).

However, we can quantify how much larger the set of $k$-nonnesting
open diagrams is compared to the set of closed diagrams--- in other
words, how much rejection would be necessary in a generation
scheme. Numerically, we observed that, perhaps surprisingly, this
ratio is not substantial for~$n$ large.  This led to a combinatorial
analysis and an upper bound on the exponential growth rate for the
number of $k$-nonnesting diagrams~\cite{Burr14}.  Based on this
construction, which applies also to more general classes, Burrill
makes the following conjecture.
\begin{conjecture}[\cite{Burr14}] Let $p_k(n)$ and $o_k(n)$ denote the
  number of $k$-nonnesting set partitions and open partition diagrams
  of size $n$, respectively. Then
\[
\lim_{n \rightarrow \infty} p_k(n)^{\frac{1}{n}}=\lim_{n\rightarrow
  \infty} o_k(n)^{\frac{1}{n}}= k^2.
\]
\end{conjecture}

\section{$3$-nonnesting set partitions}
\label{sec:setpartitions3}
We start with the $3$-nonnesting case as it forms the general template
for both $k$-nonnesting partitions, and also $k$-nonnesting
permutations. It is similar in structure to the open partition case.

This section is divided as follows: we begin with a description of the
generating tree construction for $3$-nonnesting open partition
diagrams.  The complete diagrams in this class are the $3$-nonnesting
set partitions. Then, we prove that the labels of the children of an
object can be determined from that object's label. We conclude with a
translation of the generating tree into generating function equations,
and compute some enumerative data.

\subsection{Generating tree construction}
\label{sec:gentree3}
By Proposition~\ref{prop:subclass}, the generating tree of
$3$-nonnesting open partition diagrams is a subtree of the tree in
Section~\ref{sec:openarcgentree}, which generates all open partition
diagrams. We can describe an appropriate label for this subtree.

To each $3$-nonnesting open partition diagram $\pi$ we associate the
label $\ell(\pi)=[m,s]$ where $m$ is the total number of semi-arcs and
$s$ is the number of semi-arcs in a future $2$-nesting.  The latter
value is the number of semi-arcs above at least one closed arc.
Recall from Claim~\ref{claim} if a semi-arc belongs to a future
$2$-nesting, then any semi-arc above it also belongs to a future
$2$-nesting. The label of the empty partition diagram $\epsilon$ is
$\ell(\epsilon)=[0,0]$.

\begin{example} In Figure~\ref{fig:labeldiagram}, the two arrows
  indicate the semi-arcs in future $2$-nestings. Thus, the label of
  this diagram is $[4,2]$.  If vertex~12 is a closer or a transitory
  vertex closing the semi-arc started at vertex 7, then $(7,12)$,
  $(8,9)$ and $(3,*)$ form a future $3$-nesting, which is forbidden in
  the construction. On the other hand, if vertex $12$ closes the
  semi-arc started at vertex 3, then only a $2$-nesting is created,
  but no future $3$-nesting.  Vertex $12$ can also close any of the
  other two semi-arcs and remain in the class. Consequently, this arc
  diagram has eight children in our generating tree: the two obtained
  from adding a fixed point or opener, plus the six diagrams obtained
  by making vertex $12$ a closer or transitory vertex.
\begin{figure}[htb]
\begin{tikzpicture}[scale=0.4]
  \foreach \i in {1,...,11}
       \node[pnt,label=below:$\i$] at (\i,0)(\i) {};
  \node[opnt] at (12,1)(1a){};
  \node[opnt] at (12,1.5)(2a){};
  \node[opnt,label=right:$\leftarrow$] at (12,2)(3a){};
  \node[opnt,label=right:$\leftarrow$] at (12,2.5)(4a){};
  \draw(1)  to [bend left=45] (3);
  \draw(3)  to [bend left=45] (4a);
  \draw(4)  to [bend left=45] (6);
  \draw(5)  to [bend left=45] (8);
  \draw(7)  to [bend left=45] (3a);
  \draw(8)  to [bend left=45] (9);
  \draw(10)  to [bend left=45] (2a);
  \draw(11)  to [bend left=45] (1a);
  \end{tikzpicture}
\caption{An open partition diagram with label $[4,2]$.}
\label{fig:labeldiagram}
\end{figure}
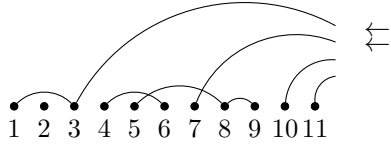
\end{example}

In general, to avoid creating a future $3$-nesting, we must not close
semi-arcs in future $2$-nestings, with the exception of the top arc,
which can always be closed. Semi-arcs not belonging to future
$2$-nestings can be closed as well.  Finally, adding openers and fixed
points does not create future $3$-nestings, so there are no
restrictions on these types of vertices.

\subsection{Succession rule}
Next we show that the label of a node provides sufficient information to
determine the number of children and their labels.

Suppose a diagram $\pi$ of size $n$ has label
$\ell(\pi)=[m,s]$. Its children (and some of their labels) are as follows, depending on the type of the added vertex $n+1$:
\begin{description}
\item[1. fixed point] one child with label $[m,s]$;
\item[2. opener] one child with label $[m+1,s]$;
\item[3. transitory] $m-s+1$ (if $s>0$) or $m$ (if $s=0$) children,
  since we can close any of the $m-s$ semi-arcs not in future
  $2$-nestings, plus the top semi-arc in the case $s>0$;
\item[4. closer] $m-s+1$ (if $s>0$) or $m$ (if $s=0$) children.
\end{description}
In the next theorem, we address the labels of children. To build
intuition, we first return to the example.
\begin{example}
We continue the example in Figure~\ref{fig:labeldiagram}. The label of
the diagram is $[4,2]$ and the application of the succession rule
yields the following, sorted by the type of vertex added:
\begin{description}
\item[1. fixed point] label $[4,2]$;
\item[2. opener] label $[5,2]$;
\item[3. transitory] in each one of the three children, the number of
  semi-arcs is preserved, while the number of semi-arcs belonging to
  future $2$-nestings depends on the semi-arc that is closed, giving
  labels $[4,3]$, $[4,2]$ and $[4,1]$ when closing $(11,*)$, $(10,*)$
  and $(3,*)$, respectively;
\item[4. closer] in each one of the three children, the number of
  semi-arcs is reduced by one, but otherwise it is analogous to the
  transitory case, giving labels $[3,3]$, $[3,2]$, $[3,1]$ when
  closing $(11, *)$, $(10, *)$ and $(3, *)$, respectively.
\end{description}
\end{example}
\begin{theorem}\label{thm:3part}
Let $\Pi^{(2)}$ be the set of $3$-nonnesting open partition diagrams. To each diagram, associate the label
$\ell(\pi)=[m,s]$ if $\pi$ has $m$ semi-arcs, $s$ of which belong to some
future $2$-nesting. Then the number of diagrams in $\Pi^{(2)}$ of size
$n$ is the number of nodes at level $n$ in the generating tree with
root label $[0,0]$ for $n=0$, and succession rule given by
\begin{equation*}
  [m,s]\rightarrow
  \begin{array}{llr}
     ~[m,s],   & &\text{(fixed point)}\\[2mm]
     ~[m+1,s], & &\text{(opener)}\\[2mm]
     ~[m, s], [m,s+1], \dots,  [m, m-1],&\text{if $m>0$,}&\text{(transitory)}\\[2mm]
     ~[m-1, s],  [m-1,s+1],\dots,   [m-1, m-1], &\text{if $m>0$,}&\text{(closer)}\\[2mm]
      ~[m, s-1], [m-1,s-1].&\text{if $m>0$ and $s>0$.}& \text{(transitory and closer}\\
           &&\text{respectively, when $s>0$)}
  \end{array}
\end{equation*}
The number of~$3$-nonnesting set partitions of~$\{1, \dots, n\}$ is
equal to the number of nodes with label~$[0,0]$ at level~$n$.
\end{theorem}

\begin{proof}
We focus on closer vertices, since transitory vertices behave identically
except for the first component, and fixed points and openers have already been discussed.

Consider a $3$-nonnesting open partition diagram $\pi$ with label $[m,s]$. It must be that $m>0$, or else
there is nothing to close.
 Closing any semi-arc  decreases  the  total  number  of  semi-arcs  by
one. Closing the bottom semi-arc  turns all the semi-arcs above it
into future  $2$-nestings, producing a diagram with label  $[m-1, m-1]$.
Closing the second lowest semi-arc  converts all remaining semi-arcs except
the bottom one into future $2$-nestings, and the resulting diagram
has label $[m-1,  m-2]$. More generally, each of the $m-s$ semi-arcs not belonging to future
$2$-nestings can be closed in this manner, yielding $m-s$ diagrams with labels $[m-1,m-i]$ for $1\le i\le m-s$.

Finally, if $s>0$, then $\pi$ contains a future $2$-nesting, and the top
semi-arc can  be closed without  creating a future  $3$-nesting. This operation removes
a future  $2$-nesting has, resulting in label $[m-1, s-1]$.
\end{proof}

The first few levels of this generating tree scheme are shown in Figure~\ref{fig:parttree}.

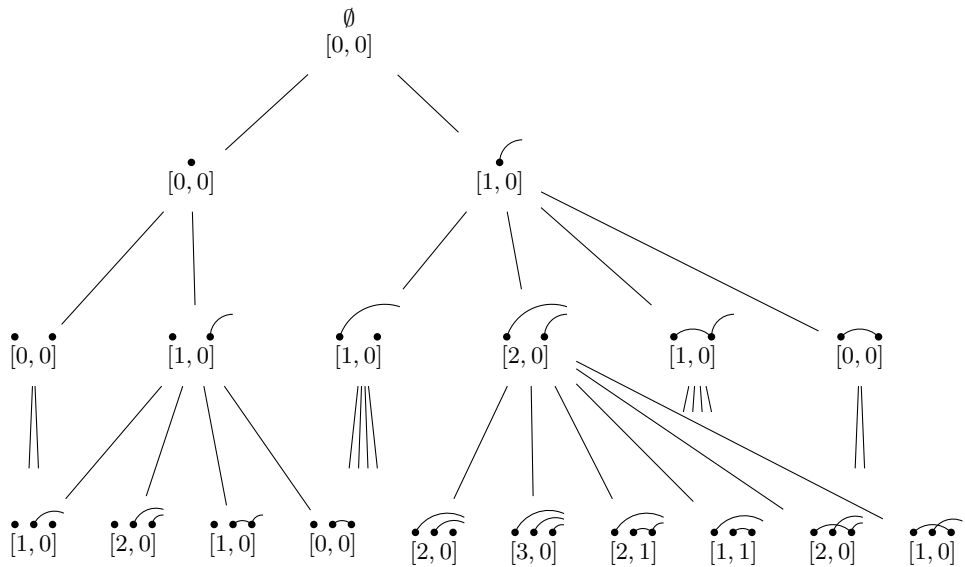
\begin{figure}[htb]
\small\centering
\newsavebox{\Paone}%
\sbox{\Paone}{
   \begin{tikzpicture}[part]
      \node {$\emptyset$};
      \node[partlabel,below=3pt] {$[0,0]$};
   \end{tikzpicture}
}
\newsavebox{\Pbone}%
\sbox{\Pbone}{
   \begin{tikzpicture}[part]
      \node[partition] {};
      \node[partlabel] {$[0,0]$};
   \end{tikzpicture}%
}
\newsavebox{\Pbtwo}%
\sbox{\Pbtwo}{
   \begin{tikzpicture}[part]%
      \node[partition] at (0,0) {};
      \draw (0,0) to (0.6,0.6);
      \node[partlabel] {$[1,0]$};
   \end{tikzpicture}%
}
\newsavebox{\Pcone}%
\sbox{\Pcone}{
   \begin{tikzpicture}[part]%
      \foreach \i in {0,1} \node[partition] at (\i,0) {};
      \node[partlabel] at (0.5,0) {$[0,0]$};
   \end{tikzpicture}%
}
\newsavebox{\Pctwo}%
\sbox{\Pctwo}{
   \begin{tikzpicture}[part]%
      \foreach \i in {0,1} \node[partition] at (\i,0) {};
      \node[partlabel] at (0.5,0) {$[1,0]$};
      \draw (1,0) to (1.6,0.6);
   \end{tikzpicture}%
}
\newsavebox{\Pcthree}%
\sbox{\Pcthree}{
   \begin{tikzpicture}[part]%
      \foreach \i in {0,1} \node[partition] at (\i,0) {};
      \node[partlabel] at (0.5,0) {$[1,0]$};
      \draw (0,0) to (1.6,0.8);
   \end{tikzpicture}%
}
\newsavebox{\Pcfour}%
\sbox{\Pcfour}{
   \begin{tikzpicture}[part]%
      \foreach \i in {0,1} \node[partition] at (\i,0) {};
      \node[partlabel] at (0.5,0) {$[2,0]$};
      \draw (0,0) to (1.6,0.8);
      \draw (1,0) to (1.6,0.6);
   \end{tikzpicture}%
}
\newsavebox{\Pcfive}%
\sbox{\Pcfive}{
   \begin{tikzpicture}[part]%
      \foreach \i in {0,1} \node[partition] at (\i,0) {};
      \node[partlabel] at (0.5,0) {$[1,0]$};
      \draw (0,0) to (1,0);
      \draw (1,0) to (1.6,0.6);
   \end{tikzpicture}%
}
\newsavebox{\Pcsix}%
\sbox{\Pcsix}{
   \begin{tikzpicture}[part]%
      \foreach \i in {0,1} \node[partition] at (\i,0) {};
      \node[partlabel] at (0.5,0) {$[0,0]$};
      \draw (0,0) to (1,0);
   \end{tikzpicture}%
}
\newsavebox{\Pdone}
\sbox{\Pdone}{
\begin{tikzpicture}[part]
\foreach \i in {0,1,2} \node[partition] at (\i/2, 0){};
\node[partlabel] at (0.5,0) {$[0,0]$};
\end{tikzpicture}
}
\newsavebox{\Pdtwo}
\sbox{\Pdtwo}{
\begin{tikzpicture}[part]
\foreach \i in {0,1,2} \node[partition] at (\i/2, 0){};
\draw(1,0) to (1.3, 0.3);
\node[partlabel] at (0.5, 0) {$[1,0]$};
\end{tikzpicture}
}
\newsavebox{\Pdthree}
\sbox{\Pdthree}{
\begin{tikzpicture}[part]
\foreach \i in {0,1,2} \node[partition] at (\i/2, 0){};
\draw(0.5, 0) to (1.3, 0.3);
\node[partlabel] at (0.5, 0){$[1,0]$};
\end{tikzpicture}
}
\newsavebox{\Pdfour}
\sbox{\Pdfour}{
\begin{tikzpicture}[part]
\foreach \i in {0,1,2} \node[partition] at (\i/2, 0){};
\draw(0.5, 0) to (1.3, 0.4);
\draw(1,0) to (1.3, 0.25);
\node[partlabel] at (0.5, 0){$[2,0]$};
\end{tikzpicture}
}
\newsavebox{\Pdfive}
\sbox{\Pdfive}{
\begin{tikzpicture}[part]
\foreach \i in {0,1,2} \node[partition] at (\i/2, 0){};
\draw(1,0) to (1.3, 0.25);
\draw(0.5, 0) to (1,0);
\node[partlabel] at (0.5, 0){$[1,0]$};
\end{tikzpicture}
}
\newsavebox{\Pdsix}
\sbox{\Pdsix}{
\begin{tikzpicture}[part]
\foreach \i in {0,1,2} \node[partition] at (\i/2, 0){};
\draw(0.5, 0) to (1,0);
\node[partlabel] at (0.5, 0){$[0,0]$};
\end{tikzpicture}
}
\newsavebox{\Pdseven}
\sbox{\Pdseven}{
\begin{tikzpicture}[part]
\foreach \i in {0,1,2} \node[partition] at (\i/2, 0){};
\draw(0,0) to (1.3, 0.3);
\node[partlabel] at (0.5, 0){$[1,0]$};
\end{tikzpicture}
}
\newsavebox{\Pdeight}
\sbox{\Pdeight}{
\begin{tikzpicture}[part]
\foreach \i in {0,1,2} \node[partition] at (\i/2, 0){};
\draw(0,0) to (1.3, 0.4);
\draw(1,0) to (1.3, 0.25);
\node[partlabel] at (0.5, 0){$[2,0]$};
\end{tikzpicture}
}
\newsavebox{\Pdnine}
\sbox{\Pdnine}{
\begin{tikzpicture}[part]
\foreach \i in {0,1,2} \node[partition] at (\i/2, 0){};
\draw(1,0) to (1.3, 0.3);
\draw(0,0) to (1,0);
\node[partlabel] at (0.5, 0){$[1,0]$};
\end{tikzpicture}
}
\newsavebox{\Pdten}
\sbox{\Pdten}{
\begin{tikzpicture}[part]
\foreach \i in {0,1,2} \node[partition] at (\i/2, 0){};
\draw(0,0) to (1,0);
\node[partlabel] at (0.5, 0){$[0,0]$};
\end{tikzpicture}
}
\newsavebox{\Pdeleven}
\sbox{\Pdeleven}{
\begin{tikzpicture}[part]
\foreach \i in {0,1,2} \node[partition] at (\i/2, 0){};
\draw(0,0) to (1.3, 0.4);
\draw(0.5, 0) to (1.3, 0.25);
\node[partlabel] at (0.5, 0){$[2,0]$};
\end{tikzpicture}
}
\newsavebox{\Pdtwelve}
\sbox{\Pdtwelve}{
\begin{tikzpicture}[part]
\foreach \i in {0,1,2} \node[partition] at (\i/2, 0){};
\draw(0,0) to (1.3, 0.5);
\draw(0.5, 0) to (1.3, 0.35);
\draw(1,0) to (1.3, 0.2);
\node[partlabel] at (0.5, 0){$[3,0]$};
\end{tikzpicture}
}
\newsavebox{\Pdthirteen}
\sbox{\Pdthirteen}{
\begin{tikzpicture}[part]
\foreach \i in {0,1,2} \node[partition] at (\i/2, 0){};
\draw(0,0) to (1.3, 0.4);
\draw(1, 0) to (1.3, 0.25);
\draw(0.5, 0) to (1,0);
\node[partlabel] at (0.5, 0){$[2,1]$};
\end{tikzpicture}
}
\newsavebox{\Pdfourteen}
\sbox{\Pdfourteen}{
\begin{tikzpicture}[part]
\foreach \i in {0,1,2} \node[partition] at (\i/2, 0){};
\draw(0,0) to (1.3, 0.3);
\draw(0.5,0) to (1,0);
\node[partlabel] at (0.5, 0){$[1,1]$};
\end{tikzpicture}
}
\newsavebox{\Pdfifteen}
\sbox{\Pdfifteen}{
\begin{tikzpicture}[part]
\foreach \i in {0,1,2} \node[partition] at (\i/2, 0){};
\draw(0.5, 0) to (1.3, 0.4);
\draw(1,0) to (1.3, 0.25);
\draw(0,0) to (1,0);
\node[partlabel] at (0.5, 0){$[2,0]$};
\end{tikzpicture}
}
\newsavebox{\Pdsixteen}
\sbox{\Pdsixteen}{
\begin{tikzpicture}[part]
\foreach \i in {0,1,2} \node[partition] at (\i/2, 0){};
\draw(0.5, 0) to (1.3, 0.3);
\draw(0,0) to (1,0);
\node[partlabel] at (0.5, 0){$[1,0]$};
\end{tikzpicture}
}
\newsavebox{\Pdseventeen}
\sbox{\Pdseventeen}{
\begin{tikzpicture}[part]
\foreach \i in {0,1,2} \node[partition] at (\i/2, 0){};
\draw(0,0) to (0.5, 0);
\draw(0.5,0) to (1.3, 0.3);
\node[partlabel] at (0.5, 0){$[1,0]$};
\end{tikzpicture}
}
\newsavebox{\Pdeighteen}
\sbox{\Pdeighteen}{
\begin{tikzpicture}[part]
\foreach \i in {0,1,2} \node[partition] at (\i/2, 0){};
\draw(0,0) to (0.5, 0);
\draw(0.5, 0) to (1.3, 0.4);
\draw(1,0) to (1.3, 0.25);
\node[partlabel] at (0.5, 0){$[2,0]$};
\end{tikzpicture}
}
\newsavebox{\Pdnineteen}
\sbox{\Pdnineteen}{
\begin{tikzpicture}[part]
\foreach \i in {0,1,2} \node[partition] at (\i/2, 0){};
\draw(0,0) to (0.5, 0);
\draw(0.5, 0) to (1,0);
\draw(1,0) to (1.3, 0.3);
\node[partlabel] at (0.5, 0){$[1,0]$};
\end{tikzpicture}
}
\newsavebox{\Pdtwenty}
\sbox{\Pdtwenty}{
\begin{tikzpicture}[part]
\foreach \i in {0,1,2} \node[partition] at (\i/2, 0){};
\draw(0,0) to (0.5, 0);
\draw(0.5, 0) to (1,0);
\node[partlabel] at (0.5, 0){$[0,0]$};
\end{tikzpicture}
}
\newsavebox{\Pdtwentyone}
\sbox{\Pdtwentyone}{
\begin{tikzpicture}[part]
\foreach \i in {0,1,2} \node[partition] at (\i/2, 0){};
\draw(0,0) to (0.5, 0);
\node[partlabel] at (0.5, 0){$[0,0]$};
\end{tikzpicture}
}
\newsavebox{\Pdtwentytwo}
\sbox{\Pdtwentytwo}{
\begin{tikzpicture}[part]
\foreach \i in {0,1,2} \node[partition] at (\i/2, 0){};
\draw(0,0) to (0.5, 0);
\draw(1,0) to (1.3, 0.3);
\node[partlabel] at (0.5, 0){$[1,0]$};
\end{tikzpicture}
}

%
%
\begin{tikzpicture}
   \tikzset{node distance=1 and 1}%
   \node (a1) {\usebox\Paone};
   \node [below left=of a1] (b1) {\usebox\Pbone};
   \tikzset{node distance=2 and 3}%
   \node [base right=of b1] (b2) {\usebox\Pbtwo};
   \foreach \i in {b1, b2}
      \path (a1) edge (\i);
   \tikzset{node distance=1.5 and 1}%
   \node [below left=of b1] (c1) {\usebox\Pcone};
   \node [base right=of c1] (c2) {\usebox\Pctwo};
   \foreach \i in {c1, c2}
      \path (b1) edge (\i);
   \node [base right=of c2] (c3) {\usebox\Pcthree};
   \node [base right=of c3] (c4) {\usebox\Pcfour};
   \node [base right=of c4] (c5) {\usebox\Pcfive};
   \node [base right=of c5] (c6) {\usebox\Pcsix};
   \foreach \i in {c3, c4, c5, c6}
      \path (b2) edge (\i);
\foreach \i/\j in { c5/4} {
\foreach \k in {1,...,\j}
 \path (\i.base) -- +(\k/8-\j/16-1/16, -1) edge (\i);
   }
\foreach \i/\j in {c1/2, c3/4, c6/2} {
\foreach \k in {1,...,\j}
 \path (\i.base) -- +(\k/8-\j/16-1/16, -1.75) edge (\i);
   }
\tikzset{node distance=1.5 and 0.01}%
\node[below=of c1](d3){\usebox\Pdthree};
\node[base right=of d3](d4){\usebox\Pdfour};
\node[base right=of d4](d5){\usebox\Pdfive};
\node[base right=of d5](d6){\usebox\Pdsix};
\node[below left=of c4](d11){\usebox\Pdeleven};
\node[base right=of d11](d12){\usebox\Pdtwelve};
\node[base right=of d12](d13){\usebox\Pdthirteen};
\node[base right=of d13](d14){\usebox\Pdfourteen};
\node[base right=of d14](d15){\usebox\Pdfifteen};
\node[base right=of d15](d16){\usebox\Pdsixteen};
\foreach \i in {d3,d4,d5,d6}
      \path (c2) edge (\i);
\foreach \i in {d11,d12,d13,d14,d15,d16}
      \path (c4) edge (\i);
\end{tikzpicture}
\caption{Generating tree for $3$-nonnesting open partition diagrams. The first few levels agree with the generating tree for all open partition diagrams,
since the $3$-nonnesting restriction only influences levels further down the tree.}
\label{fig:parttree}
\end{figure}
\subsection{Functional equation}
\label{sec:fcteq}
We next translate the succession rule of Theorem ~\ref{thm:3part} into
generating function equations.  The generating function for
$3$-nonnesting set partitions is consequently obtained by an
evaluation of this functional equation.

As before, we define the multivariate generating
function
\[A(u,v;z)=\sum_{\pi\in \Pi^{(2)}}
u^m v^sz^{|\pi|}= \sum_{m,s,n} a_{m,s}(n) u^mv^s z^n,\]
where $[m,s]$ are the components of $\ell(\pi)$ in the first sum, and
$a_{m,s}(n)$ is the
number of $3$-nonnesting set partitions~$\pi$ at level~$n$ of the generating
tree with label $\ell(\pi)=[m,s]$. For the sake of simplicity, we
use~$A(u,v)$ interchangeably with~$A(u,v;z)$.

\begin{corollary}\label{cor:uv}
 The generating function $A(u,v)$ for $3$-nonnesting open partition diagrams, with variables $u$ and $v$ marking values $m$ and $s$ in the label, respectively,
and $z$ marking the number of vertices, satisfies the functional equation
\begin{equation}
   A(u,v)
   = 1+z \left(
      (1+u) A(u,v)
      + \Bigl( 1+\frac1{u} \Bigr) \left(
        \frac{A(u,v)}{v(1-v)} -\frac{A(uv,1)}{1-v} - \frac{A(u,0) }{v}
      \right)
   \right).
\label{eq:uv}
\end{equation}
\end{corollary}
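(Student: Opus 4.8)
The plan is to read the succession rule \eqref{3setchild} as a linear recursion on the level-by-level generating function and turn it into a fixed-point equation for $A(u,v)=A(u,v;z)$. Concretely, weight each node of the generating tree that sits at level $n$ with label $[i,j]$ by $u^iv^jz^n$, so that $A(u,v)$ is the sum of these weights over all nodes. The root (the empty partition, label $[0,0]$, level $0$) contributes the monomial $1$; every other node is a child of a unique node one level up, so I will express the total weight of the non-root nodes as $z$ times the effect of applying \eqref{3setchild}, read as an operator that replaces each node's weight by the weights of its children, to $A(u,v)$. Since the multiset of children of a node depends only on its label, this is a well-defined operation on formal power series, and equating the two expressions for $A(u,v)$ yields \eqref{eq:uv}. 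This is the standard generating-tree-to-functional-equation dictionary, so the work is entirely in bookkeeping.

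Carrying it out line by line: the fixed point and semi-opener lines of \eqref{3setchild} contribute $zA(u,v)$ and $zuA(u,v)$, giving the term $z(1+u)A(u,v)$. For the semi-transitory vertex, the family $[i,j],[i,j+1],\dots,[i,i-1]$ contributes, after summing the finite geometric series $v^j+\dots+v^{i-1}=\tfrac{v^j-v^i}{1-v}$ (valid, as the empty sum, when $i=j$) and using that $A_{i,j}(n)=0$ for $j>i$, the expression $z\,\frac{A(u,v)-A(uv,1)}{1-v}$, where I recognise $\sum A_{i,j}(n)u^iv^iz^n=A(uv,1)$. The extra semi-transitory child $[i,j-1]$, present only when $j>0$, contributes $z\,\frac{A(u,v)-A(u,0)}{v}$, because the restriction $j\ge1$ removes exactly the slice $A(u,0)$. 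The two closer lines are obtained from the two semi-transitory lines by replacing the first coordinate $i$ with $i-1$, i.e. by a global factor $1/u$, so they contribute $\frac{z}{u}$ times the semi-transitory total. Adding $1$ to the sum of all four contributions reproduces \eqref{eq:uv}.

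The step that is not purely mechanical, and where I expect to be careful, is checking that the side conditions "$i\ge1$" and "$j>0$" in \eqref{3setchild} are faithfully captured by the closed-form manipulations, even though the sums above run over all $i,j$. For the closer lines this amounts to justifying division by $u$: the $i=0$ slice of $A(u,v)$ is $\sum_n A_{0,0}(n)z^n$ (with no semi-arcs there can be no future $2$-nesting, so necessarily $j=0$), and this slice cancels in $A(u,v)-A(uv,1)$ and is absent from $A(u,v)-A(u,0)$, so the bracketed quantity in \eqref{eq:uv} is genuinely divisible by $u$ and the identity holds in the power series ring. An entirely analogous observation shows that subtracting $A(u,0)$ is exactly what the "$j>0$" caveat on the extra children demands. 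This boundary bookkeeping, rather than any hard computation, is the only subtle point in the argument.
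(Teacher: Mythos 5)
Your proposal is correct and follows essentially the same route as the paper: translate each line of the succession rule into a weighted contribution, sum the finite geometric series over the semi-transitory children to get $\frac{A(u,v)-A(uv,1)}{1-v}$ and $\frac{A(u,v)-A(u,0)}{v}$, and obtain the closer terms from the semi-transitory ones by the factor $1/u$. Your extra remark verifying divisibility by $u$ (and that the $i\ge1$, $j>0$ caveats are automatically encoded) is a correct elaboration of a point the paper leaves implicit.
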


\begin{proof}
The root~$\pi_0$ has label~$[0,0]$ and size~$n=0$. Let
$\Succ([m,s])$ be the set of labels resulting from the application of
the succession rule in Theorem~\ref{thm:3part} to the label
$[m,s]$. The generating tree gives
\begin{equation}
\label{eq:succ}
A(u,v)= 1 + \sum_{m,s,n} a_{m,s}(n) z^{n+1} \sum_{[m',s']\in\Succ([m,s])}u^{m'}v^{s'}.
\end{equation}
The terms in the interior sum originating from a fixed point and an
opener are straightforward. Let us now compute the terms coming from a transitory vertex.
These terms only appear when $m>0$, which also implies $n>0$. 
For  $s> 0$, we get
\[
\sum_{[m',s']\in \{ [m, s-1], [m,s], [m, s+1], \ldots, [m, m-1] \}}\hspace{-2cm}u^{m'}v^{s'}
= u^m (v^{s-1} + v^{s+1}+ \dots + v^{m-1}) = u^m\, \frac{v^{s-1} - v^m}{1-v}.
\]
For $s=0$, we get
\[
\sum_{[m',s']\in \{  [m,0], [m, 1], \ldots, [m, m-1] \}}\hspace{-1.5cm}u^{m'}v^{s'}
= u^m (v1 + v+ \dots + v^{m-1}) = u^m\, \frac{1 - v^m}{1-v}.
\]
Thus, the contribution in Equation~\eqref{eq:succ} from the children obtained by adding a transitory vertex is
\begin{multline*}
\sum_{n,m,s>0} a_{m,s}(n) z^{n+1}  u^m\,
\frac{v^{s-1} - v^m}{1-v} +\sum_{n,m> 0} a_{m,0}(n) z^{n+1} u^m\,
\frac{1 - v^m}{1-v}\\
=\frac{z}{1-v}\left( \sum_{n,m,s>0} a_{m,s}(n) z^n
u^mv^{s-1} +\sum_{n, m> 0} a_{m,0}(n) z^n
u^m-\sum_{n, m> 0, s\geq 0} a_{m,s}(n) z^n  u^mv^m\right).
\end{multline*}

Writing these summations in terms of evaluations of the generating function $A(u,v)$, this expression becomes
$$
\frac{z}{1-v}\left(\frac{A(u,v)-A(u,0)}{v}+A(u,0)-A(0,0)
  -(A(uv,1)-A(0,0))\right)\\
=z\left(\frac{A(u,v)}{v(1-v)}-\frac{A(uv,1)}{1-v}-\frac{A(u,0)}{v}\right).
$$
The computations for the terms coming from a closer vertex are very similar, the only difference in the formula being a factor of
$1/u$.

Combining the contributions for the four types of vertices we obtain the desired functional equation.
\end{proof}

\section{$k$-nonnesting set partitions}
\label{sec:setpartitionsk}
We follow the same general procedure to enumerate $k$-nonnesting
set partitions for general~$k$. In the first two subsections below
we generalize the generating tree construction and functional equation.
Then we consider a related family of patterns, called enhanced $k$-nestings. We shall see in
Section~\ref{sec:baxter} that the series for open partition diagrams
with neither regular nor future enhanced $3$-nestings appears to be in bijection with Baxter permutations.

\subsection{Generating tree and functional equation}
\label{sec:knonnesting}
Henceforth we shift the index and consider instead $k+1$-nonnesting
partitions.  Suppose that~$\pi$ is a $k+1$-nonnesting open partition
diagram to which we add a vertex on the rightmost end. If $\pi$ has no
future $k$-nestings, we can add any type of vertex. If, on the other
hand, $\pi$ has a future $k$-nesting and the added vertex closes a
semi-arc, it must close the top arc. Otherwise, the top arc together
with the $k$ nesting below it would create a future $k+1$-nesting.

To be able to control future $k$-nestings, we need to keep track of
$j$-nestings for $j<k$.  This is done via the \emph{nesting index\/}
of a semi-arc, which we define to be the maximum $j$ such that there
is a $j$-nesting beneath it. Equivalently, the nesting index of a
semi-arc is the largest~$j$ such that the semi-arc is in a future
$j+1$-nesting.  We track the distribution of nesting indices on the
semi-arcs, updating this distribution every time we add a vertex, and
avoiding the appearance of future $k+1$-nestings.

To each $k+1$-nonnesting open partition diagram~$\pi$, we associate a
label with $k$ components~$\ell(\pi)=[s_0, \dots, s_{k-1}]$, where
$s_i$ is defined to be the number of semi-arcs with nesting index
greater than or equal to~$i$. Figure~\ref{fig:klabeldiagram} contains
an example.  In our notation,~$s_0$ is the total number of semi-arcs,
and $s_{k-1}$ is the number of semi-arcs in a future $k$-nesting.  For
$k=2$, this labelling is consistent with Section~\ref{sec:gentree3}.
Furthermore, note that by definition, $s_0\ge s_1\ge\dots\ge
s_{k-1}\ge 0$. The label of the empty partition is~$[0,0,\dots,0]$,
since it contains no semi-arcs.

\begin{figure}[htb]
\begin{tikzpicture}[scale=0.5]
 \foreach \i in {1,...,14}
     \node[pnt,label=below:$\i$] at (\i,0)(\i) {};
\node[opnt, label=right:$\quad\mathit 0$] at (15,.5)(1a){};
\node[opnt, label=right:$\quad \mathit 1$] at (15,1.25)(2a){};
\node[opnt, label=right:$\quad \mathit 1$] at (15,2)(3a){};
\node[opnt, label=right:$\quad \mathit 2$] at (15,2.75)(4a){};
\node[opnt, label=right:$\quad \mathit 3$]at (15,3.5)(5a){};
\draw(1)  to [bend left=45] (3);
\draw(3)  to [bend left=20] (5a);
\draw(4)  to [bend left=45] (11);
\draw(5)  to [bend left=45] (10);
\draw(7)  to [bend left=20] (4a);
\draw(9) to [bend left=45] (14);
\draw(8)  to [bend left=45] (9);
\draw(10)  to [bend left=20] (3a);
\draw(11)  to [bend left=20] (2a);
\draw(12) to [bend left=45] (13);
\draw (14) to [bend left=20] (1a);
\end{tikzpicture}
\caption{An open partition diagram with label $[5, 4, 2, 1]$. The
  nesting index of each semi-arc is labelled in italics.}
\label{fig:klabeldiagram}
\end{figure}
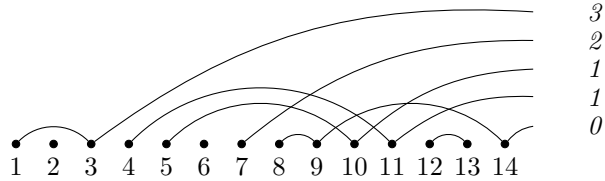

As in Section \ref{sec:gentree3}, we can predict the labels of the
children of a given node from its label alone, based on an analysis of
the four types of vertices that can be added. Since $s_{k-1}$ is the number
of semi-arcs in future $k$-nestings, the completion of such semi-arcs
must be done with care to avoid creating future $k+1$-nestings. When a
semi-arc is closed, the effect on the labels is determined by the fact
that the nesting index of those semi-arcs above it that had the same
nesting index increases by one. This is all that needs to be
tracked. These observations allow us to describe a succession rule for the
generating tree, summarized in the following theorem, which
generalizes Theorem~\ref{thm:3part}.
\begin{theorem}\label{thm:kpart}
  Let $\Pi^{(k)}$ be the set of $k+1$-nonnesting open partition
  diagrams. To each diagram, associate the label $\ell(\pi)=[s_0, \dots, s_{k-1}]$, where $s_i$ is the number of semi-arcs with nesting index $\ge i$. Then, the number of
  diagrams in $\Pi^{(k)}$ of size $n$ is the number of nodes at level
  $n$ in the generating tree with root label $[0,0,\dots,0]$, and
  succession rule given by
\begin{multline*}
 [s_0,s_1,\dots,s_{k-1}]\rightarrow\\
  \begin{array}{llr}
     ~[s_0,s_1,\dots,s_{k-1}],   & \text{\small (1)}\\[2mm]
     ~[s_0+1,s_1,\dots,s_{k-1}], & \text{\small (2)}\\[2mm]
     ~[s_0,s_1-1,\dots,s_{j-1}-1,i,s_{j+1},\dots,s_{k-1}],\quad
        \text{for $1\le j\le k-1$ and $s_j\le i\le s_{j-1}-1$,} & \text{\small (3)} \\[2mm]
     ~[s_0-1,s_1-1,\dots,s_{j-1}-1,i,s_{j+1},\dots,s_{k-1}], \quad
        \text{for $1\le j\le k-1$ and $s_j\le i\le s_{j-1}-1$,} & \text{\small (4)}\\[2mm]
     ~[s_0,s_1-1,\dots,s_{k-1}-1],
     [s_0-1,s_1-1,\dots,s_{k-1}-1], \quad
        \text{if $s_{k-1}>0$.} & \text{\small (5)}\\
\end{array}
\end{multline*}
\end{theorem}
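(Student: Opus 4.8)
The plan is to establish Theorem~\ref{thm:kpart} by the same kind of bookkeeping argument used for Theorem~\ref{thm:3part}, but carried out in terms of the full vector of nesting indices rather than a single counter. First I would recall precisely what happens to the multiset of nesting indices when a new vertex $n$ is appended to a $k+1$-nonnesting open partition diagram $\pi$. There are four vertex types: a fixed point or a semi-opener leaves the nesting index of every existing semi-arc unchanged (a fixed point lies beneath no semi-arc and creates no arc, and a semi-opener creates a semi-arc that starts to the right of everything, hence beneath no closed arc, so its nesting index is $0$); a closer or a semi-transitory closes some semi-arc $(a,*)$ of nesting index, say, $d$, and this has two effects. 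The newly closed arc $(a,n)$ sits beneath exactly the semi-arcs whose left end-point is $<a$; among those, the ones whose nesting index was exactly $d$ now acquire, beneath them, a $(d{+}1)$-nesting (the witnessing $d$-nesting under $(a,*)$ together with the arc $(a,n)$), so their nesting index goes up to $d+1$, while semi-arcs of nesting index $>d$ or $<d$, and semi-arcs whose left end-point is $\ge a$, are unaffected. A semi-transitory additionally creates a fresh semi-arc of nesting index $0$, exactly as a semi-opener does.

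The next step is to translate this into the language of the label $[s_0,\dots,s_{k-1}]$, where $s_i$ counts semi-arcs of nesting index $\ge i$. Adding a fixed point changes nothing, giving line~(1); adding a semi-opener adds one semi-arc of index $0$, i.e.\ increments $s_0$ only, giving line~(2). For a closer that closes a semi-arc of nesting index $d$ which is \emph{not} the topmost semi-arc of index $\ge d$ — equivalently, some semi-arc above it still has index exactly $d$, which by the ordering convention forces $d\le k-1$ and is governed by the choice $j=d+1$ — a certain nonempty set of semi-arcs of index exactly $d$ moves up to index $d+1$. In terms of the $s$-vector this decreases each of $s_1,\dots,s_{d}$ by one (one semi-arc dropped out of the ``$\ge i$'' count? — no: re-examine) and replaces $s_d$ by a value $i$ in the admissible range. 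Here the cleanest route is the one the theorem takes: parametrize by the index $j=d+1\in\{1,\dots,k-1\}$ of the first coordinate that changes, and by the new value $i$ of that coordinate, which can be any integer with $s_j\le i\le s_{j-1}-1$ — the lower bound because the semi-arcs of index $>d$ are untouched, the upper bound because at least one semi-arc of index $d$ remains above the closed one. All coordinates $s_1,\dots,s_{j-1}$ decrease by one because the single closed semi-arc no longer contributes to those counts — wait, that is only true for the closer, not the semi-transitory. For a semi-transitory the closed semi-arc is replaced by a new one of index $0$, so $s_0$ is unchanged; this yields line~(3). For a closer, $s_0$ also drops by one, giving line~(4). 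Finally, when the closed semi-arc \emph{is} the topmost semi-arc of index $\ge k-1$, i.e.\ when $s_{k-1}>0$ and we close that particular semi-arc, no semi-arc lies above it with index $\ge k-1$ (otherwise we would create a future $k+1$-nesting, which is forbidden), so all of $s_1,\dots,s_{k-1}$ drop by one; the semi-transitory version keeps $s_0$ and the closer version drops it, giving the two labels in line~(5).

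Having verified that the label of every child is as listed and that every listed label is realized by exactly one child, I would note that the number of children and their labels depend only on the parent's label, so the set $\Pi^{(k)}$ of $k+1$-nonnesting open partition diagrams, graded by size, is in size-preserving bijection with the levels of the generating tree rooted at $[0,\dots,0]$ — which is the claim. One should also check the boundary conditions: when $s_{k-1}=0$ line~(5) is vacuous and closing any semi-arc of index $\le k-2$ falls under lines~(3)/(4) with the appropriate $j$; when $s_0=0$ there are no semi-arcs to close and only lines~(1) and~(2) apply; and the inequalities $s_0\ge s_1\ge\dots\ge s_{k-1}\ge0$ are preserved by every succession rule, which should be recorded so that the tree stays inside the intended label set. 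For $k=2$ one recovers Theorem~\ref{thm:3part} after matching notation ($s_0=i$, $s_1=j$), providing a consistency check.

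The main obstacle, and the step I would write most carefully, is the precise combinatorial claim that when a semi-arc $(a,*)$ of nesting index $d$ closes, the semi-arcs that gain nesting index are \emph{exactly} those with left end-point $<a$ and current nesting index equal to $d$, together with the assertion that their new index is exactly $d+1$ and not more. The ``$\ge d+1$'' direction is the easy half; the ``$\le d+1$'' direction requires knowing that $\pi$ had no future $(d+2)$-nesting using a semi-arc above $(a,*)$, which in turn rests on the global hypothesis that $\pi$ is $k+1$-nonnesting only when $d+2\le k+1$, i.e.\ $d\le k-1$ — and when $d=k-1$ the topmost-semi-arc case of line~(5) is precisely what prevents the forbidden $(k+1)$-nesting. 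Making this interplay between the local update rule and the global $k+1$-nonnesting constraint fully rigorous — in particular, arguing that the admissible range $s_j\le i\le s_{j-1}-1$ for $i$ is correct because a $d$-nesting beneath a semi-arc is inherited by every semi-arc further out, so the nesting indices of the semi-arcs are weakly decreasing from bottom to top and the ones of index exactly $d$ form a contiguous ``layer'' — is where the real content of the proof lies.
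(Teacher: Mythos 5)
Your overall route is the same as the paper's: the entire content of the succession rule is the local update fact you establish in your first paragraph --- closing a semi-arc $(a,*)$ of nesting index $d$ at the new vertex promotes exactly the semi-arcs above it of index exactly $d$, and only to $d+1$, since any nesting through the new arc $(a,n)$ has $(a,n)$ outermost and its interior part lies beneath $(a,*)$, hence has size at most $d$ --- and this is precisely the observation the paper states just before Theorem~\ref{thm:kpart}, proved by you in more detail than the paper gives. The identification of the five vertex types with lines (1)--(5), the fact that closer versus semi-transitory affects only $s_0$, and the monotonicity of nesting indices along the stack of semi-arcs (which makes the index-$d$ semi-arcs a contiguous layer and matches each admissible $i$ with exactly one closable semi-arc) are all as in the paper.

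That said, the translation step has concrete slips you should repair. First, ``not the topmost semi-arc of index $\ge d$'' is \emph{not} equivalent to ``some semi-arc above it has index exactly $d$'' (every semi-arc above could have strictly larger index), and your case for lines (3)/(4) asserts a \emph{nonempty} set of promotions; as written it misses the children with new $j$-th coordinate $i=s_j$, which come from closing a semi-arc of index exactly $d=j-1$ with no same-index semi-arc above it (in particular the topmost semi-arc of that index class when $d\le k-2$) --- your boundary remark only patches the case $s_{k-1}=0$ and contradicts your own case definition. Second, in that case you allow $d\le k-1$ ``governed by $j=d+1$''; for $d=k-1$ this would be the nonexistent coordinate $j=k$, and in fact closing a non-topmost semi-arc of index $k-1$ is exactly what the $k+1$-nonnesting condition forbids (a promoted semi-arc would witness a future $k+1$-nesting), so lines (3)/(4) require $d\le k-2$ and index-$(k-1)$ semi-arcs enter only through line (5); your last paragraph gestures at this, but the main case analysis must exclude it explicitly. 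Third, the upper bound $i\le s_{j-1}-1$ holds because the closed semi-arc itself has index exactly $d$ and is removed (or reborn with index $0$) rather than promoted, not ``because at least one semi-arc of index $d$ remains above the closed one'' (also, the indices weakly \emph{increase} from the bottom semi-arc to the top one, not decrease). All of these are mechanically repairable from your first-paragraph update rule, but as written the enumeration of children for lines (3)/(4) is neither complete nor correctly bounded.
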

\begin{proof}
The labels arise from adding the following kinds of vertices:
\begin{enumerate}
\item  a fixed point;
\item  a opener;
\item  a transitory;\label{eq:tran}
\item  a closer;
\item  a transitory or a closer that closes the top semi-arc, if
  the parent diagram has a future $k$-nesting.
\end{enumerate}
\end{proof}
As in Section \ref{sec:fcteq}, the generating tree in Theorem
\ref{thm:kpart} can be translated to a functional equation. Consider
the generating function $Q(v_0, v_1, \ldots, v_{k-1};z)=\sum
Q_{s_0,s_1, \ldots, s_{k-1}}(n)v_0^{s_0}v_1^{s_1}\ldots
v_{k-1}^{s_{k-1}} z^n$, where $Q_{s_0, s_1, \ldots, s_{k-1}}(n)$ is
the number of $k+1$-nonnesting open partition diagrams at level $n$ of
the generating tree with label $[s_0, s_1, \ldots, s_{k-1}]$. For
simplicity, we use the notation $Q=Q(v_0, \ldots, v_{k-1})=Q(v_0,
\ldots, v_{k-1};z)$ and $Q_{\mathbf{s}}(n)=Q_{s_0, \ldots,
  s_{k-1}}(n)$.

The equations for the addition of an opener or a fixed point are
analogous to the ones described in Section~\ref{sec:fcteq}. Now
consider the addition of a transitory that closes a semi-arc with
nesting index greater than or equal to $j$. This corresponds
to~line~\ref{eq:tran} in Theorem~\ref{thm:kpart}, giving
\begin{multline*}
  z\sum_{s_0,\dots, s_{k-1}} Q_{\mathbf{s}}(n) v_0^{s_0}v_1^{s_1-1}v_2^{s_2-1}\dots v_{j-1}^{s_{j-1}-1}(v_j^{s_j}+v_j^{s_j+1}+\dots+v_j^{s_{j-1}-1})v_{j+1}^{s_{j+1}}\dots v_{k-1}^{s_{k-1}} z^n\\
  =\frac{z}{v_1\dots v_{j-1}} \sum_{s_0,\dots, s_{k-1}} Q_{\mathbf{s}}(n)v_0^{s_0}\dots v_{k-1}^{s_{k-1}}\left( \frac{1-v_j^{s_{j-1}-s_j}}{1-v_j}\right) z^n\\
  =\frac{z}{v_1\dots v_{j-1}(1-v_j)} \left(Q-Q(v_0, \dots, v_{j-2},
    v_{j-1}v_j, 1, v_{j+1}, \dots, v_{k-1})\right).
\end{multline*}
Summing over all possible labels gives
\[\sum_{j=1}^{k-1}\frac{z}{v_1\dots v_{j-1}(1-v_j)}\left(Q-Q(v_0, \dots, v_{j-2}, v_{j-1}v_j,1,v_{j+1}, \dots, v_{k-2}, v_{k-1})\right).\]
If~$s_k>0$, the addition of a transitory corresponds to $(5)$ in
Theorem~\ref{thm:kpart}. This translates into the equation
\[
z\sum_{s_0, s_1, \dots, s_{k-2}, s_{k-1}\geq1}
Q_{\mathbf{s}}(n)v_0^{s_0}v_1^{s_1-1} \dots v_{k-1}^{s_{k-1}-1}z^n
=\frac{z}{v_1v_2\dots v_{k-1}}(Q-Q(v_0, v_1, \dots, v_{k-2}, 0)).\]
The addition of closers proceeds similarly, and combining the
expressions for all four types of vertices, we get the following
functional equation.

\begin{corollary}\label{cor:kpart}
The generating function for $k+1$-nonnesting open partition diagrams, with variable $v_i$ marking value $s_i$ in the label
and variable $z$ marking number of vertices,
denoted $Q=Q(v_0,v_1,\dots,v_{k-1})=Q(v_0,v_1,\dots,v_{k-1};z)$, satisfies the
functional equation
\begin{multline*}
   Q = 1+z (1+v_0) \biggl(
      Q + \frac{1}{v_0v_1\dots v_{k-1}} (
         Q - Q(v_0,v_1,\dots,v_{k-2},0)
      )
\\
      + \sum_{j=1}^{k-1}
         \frac{1}{v_0v_1\dots v_{j-1}(1-v_j)} (
            Q - Q(v_0,\dots,v_{j-2},v_{j-1}v_j,1,v_{j+1},\dots,v_{k-2}, v_{k-1})
         )
   \biggr).
\end{multline*}
\end{corollary}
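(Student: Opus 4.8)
The plan is to translate the succession rule of Theorem~\ref{thm:kpart} into a functional equation exactly as was done for $k=2$ in Section~\ref{sec:fcteq}, just with more bookkeeping variables. Write $Q=\sum Q_{s_0,\dots,s_{k-1}}(n)\,v_0^{s_0}\cdots v_{k-1}^{s_{k-1}}z^n$ where $Q_{\mathbf s}(n)$ counts the nodes at level $n$ with label $[s_0,\dots,s_{k-1}]$; the root contributes the constant $1$. Each of the five lines of the succession rule contributes a term obtained by multiplying by $z$ (one new vertex) and by the appropriate monomial in the $v_i$, then summing over all diagrams. Lines (1) and (2) are immediate: a fixed point contributes $zQ$, and a semi-opener contributes $zv_0Q$, giving the $z(1+v_0)Q$ out front once we factor. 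The real work is lines (3), (4) and (5), which are governed by which semi-arc gets closed.

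First I would handle line (3), the semi-transitory moves that do not close the top semi-arc of a future $k$-nesting. Fix $j$ with $1\le j\le k-1$: closing a semi-arc whose nesting index is exactly $j-1$ sends $[s_0,\dots,s_{k-1}]$ to $[s_0,s_1-1,\dots,s_{j-1}-1,i,s_{j+1},\dots,s_{k-1}]$ for each $i$ with $s_j\le i\le s_{j-1}-1$. Summing the monomial $v_0^{s_0}v_1^{s_1-1}\cdots v_{j-1}^{s_{j-1}-1}v_i\,\text{(rest)}$ — here I mean $v_j^{i}$ — over the range $s_j\le i\le s_{j-1}-1$ is a finite geometric sum in $v_j$, yielding $\dfrac{v_j^{s_j}-v_j^{s_{j-1}}}{1-v_j}$. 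After dividing through by $v_1\cdots v_{j-1}$ and recognizing the two resulting series, the $j$-th term becomes
\[
   \frac{z}{v_0v_1\cdots v_{j-1}(1-v_j)}\Bigl(Q - Q(v_0,\dots,v_{j-2},v_{j-1}v_j,1,v_{j+1},\dots,v_{k-1})\Bigr),
\]
modulo a factor of $v_0$ that I will pull out; the substitution $v_{j-1}\mapsto v_{j-1}v_j$ and $v_j\mapsto 1$ is precisely the $A(u,v)\mapsto A(uv,1)$ phenomenon of the $k=2$ case, now applied to a single adjacent pair of coordinates. Line (4) is identical except that the closer decreases the total semi-arc count, so every such term picks up an extra factor $1/v_0$; this is exactly why the whole sum in the corollary is multiplied by $(1+v_0)$ rather than split. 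Finally line (5): closing the very top semi-arc in a future $k$-nesting drops every $s_i$ by one, contributing $\dfrac{1}{v_0v_1\cdots v_{k-1}}\bigl(Q-Q(v_0,\dots,v_{k-2},0)\bigr)$ — the subtracted term removing the contribution of diagrams with $s_{k-1}=0$, where this move is unavailable — again up to the overall $(1+v_0)$ coming from semi-transitory-versus-closer.

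Collecting the constant $1$, the $z(1+v_0)Q$ from lines (1)--(2), the $j=1,\dots,k-1$ terms from lines (3)--(4) (which, after factoring $(1+v_0)$, give the sum $\sum_{j=1}^{k-1}\frac{1}{v_0\cdots v_{j-1}(1-v_j)}(Q-Q(\dots))$ appearing in the statement), and the line (5) term $\frac{1}{v_0\cdots v_{k-1}}(Q-Q(v_0,\dots,v_{k-2},0))$, gives exactly the displayed functional equation. The one point demanding care — and the place where I expect the argument to be most delicate — is verifying that the substitutions in each summand land on a genuine power series, i.e.\ that dividing $Q - Q(\text{shifted})$ by the corresponding product $v_0\cdots v_{j-1}(1-v_j)$ (or $v_0\cdots v_{k-1}$) leaves no negative powers: one must check, using the constraint $s_0\ge s_1\ge\dots\ge s_{k-1}\ge0$ from Theorem~\ref{thm:kpart}, that the geometric-sum numerator $v_j^{s_j}-v_j^{s_{j-1}}$ is divisible by $(1-v_j)$ with a quotient supported on exponents $\ge s_j$, and that the leading $v_0^{s_0}\cdots v_{j-1}^{s_{j-1}}$ absorbs the division by $v_0\cdots v_{j-1}$ precisely because $s_0,\dots,s_{j-1}\ge1$ whenever line (3)/(4) with parameter $j$ actually fires (which forces $s_{j-1}\ge s_j+1\ge1$ and hence $s_0\ge\dots\ge s_{j-1}\ge1$). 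Once that positivity bookkeeping is in place, the rest is the routine geometric-series manipulation already rehearsed in Corollary~\ref{cor:uv}.
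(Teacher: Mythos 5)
Your proposal is correct and follows essentially the same route as the paper: the paper obtains Corollary~\ref{cor:kpart} by translating the succession rule of Theorem~\ref{thm:kpart} term by term into the generating function, using finite geometric sums for rules (3)--(4), the $(1+v_0)$ factor to merge the semi-transitory/closer variants, and the subtraction $Q-Q(v_0,\dots,v_{k-2},0)$ for rule (5), exactly as rehearsed for $k=2$ in Section~\ref{sec:fcteq}. Your bookkeeping of the substitution $v_{j-1}\mapsto v_{j-1}v_j$, $v_j\mapsto 1$ and of the positivity/divisibility constraints matches the paper's computation.
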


Note that, as $k$ increases, the number of catalytic variables increases as well.

\subsection{How to use the equations}
The evaluation $v_0=0$ in $Q$ corresponds to the generating function for
$k+1$-nonnesting partitions, $Q(0,v_1,\dots,v_{k-1};z)$.  Note that
this is a function of $z$ only, because of the restriction $s_0\ge
s_1\ge\dots\ge s_{k-1}\ge 0$.  The functional equation in
Corollary~\ref{cor:kpart}, which specializes to Corollary~\ref{cor:uv}
for $k=2$, is amenable to series generation. This is explained in Section~\ref{sec:enumerative}, which also
summarizes some numerical results.

In attempting to solve these equations, the kernel method is a natural tool, but we have encountered some obstacles.
The case $k=1$ has no geometric series terms, and the equations are easily
solved for the bivariate generating function, for example, using the
orbit sum method. However, this is not surprising as this case gives simply a
generalized Catalan generating function. For higher values of $k$, the
particular subset of evaluations suggested that the symmetrizing
approach of~\cite{Bous11} might be the appropriate variant. However,
it is not directly applicable here because of a lack of a certain kind
of symmetry in the distribution of the parameter. This is an
intriguing area of future research.

\subsection{Enhanced nestings}\label{sec:enhanced}
Another relevant pattern in partition diagrams is the \emph{enhanced $k$-nesting}. An enhanced $k$-nesting is either a $k$-nesting, or a set of $k-1$ arcs $(i_1, j_1),\dots,(i_{k-1}, j_{k-1})$ and a fixed point $i_k$ (a singleton
block in the corresponding partition) such that
\[
   i_1<i_2<\dots<i_{k-1}<i_k<j_{k-1}<\dots<j_1,
\]
that is, a $k-1$-nesting with a fixed point inside the innermost arc. With a comparable definition for enhanced $k$-crossings,
the symmetry between nesting and crossing patterns for different structures has been shown to hold in
the enhanced version of the patterns as well. Set partitions are one such structure.
Bousquet-M\'elou and Xin~\cite{BoXi05} considered both enhanced and usual crossings in set partitions.
Our construction that generates $k$-nonnesting partitions can be easily
modified to generate partitions with no enhanced $k$-nesting.
For this purpose, we define a \emph{future enhanced $k$-nesting} as an enhanced $k-1$-nesting together with a semi-arc
beginning to its left, and we let the \emph{enhanced nesting index} of a semi-arc be the largest~$j$ such that it is in a future enhanced $j+1$-nesting.
The labels of the nodes in the generating tree now keep track of semi-arcs according to their enhanced nesting index.
The only difference in the construction of the tree is that the addition of a fixed point to an open partition diagram $\pi$ can create future enhanced $2$-nestings.
Indeed, the semi-arcs that had enhanced nesting index $0$ in $\pi$ have enhanced nesting index 1 after the fixed point is added. This results in the following variation of Theorem~\ref{thm:kpart}.

\begin{theorem}\label{thm:kepart}
Let $\widetilde{\Pi}^{(k)}$ be the set of open partition diagrams with neither enhanced $k+1$-nestings nor future enhanced $k+1$-nestings.
To each diagram, associate the label $\ell(\pi)=[s_0, \dots, s_{k-1}]$, where $s_i$ is the
  number of semi-arcs with enhanced nesting index $\ge i$. Then the number of
  diagrams in $\widetilde{\Pi}^{(k)}$ of size $n$ is the number of nodes at level
  $n$ in the generating tree with root label $[0,0,\dots,0]$, and
  succession rule given by
\begin{multline*}
 [s_0,s_1,\dots,s_{k-1}]\rightarrow\\
  \begin{array}{llr}
     ~[s_0,s_0,s_2,\dots,s_{k-1}],   & \text{\small (1)}\\[2mm]
     ~[s_0+1,s_1,\dots,s_{k-1}], & \text{\small (2)}\\[2mm]
     ~[s_0,s_1-1,\dots,s_{j-1}-1,i,s_{j+1},\dots,s_{k-1}],\quad
        \text{for $1\le j\le k-1$ and $s_j\le i\le s_{j-1}-1$,} & \text{\small (3)} \\[2mm]
     ~[s_0-1,s_1-1,\dots,s_{j-1}-1,i,s_{j+1},\dots,s_{k-1}], \quad
        \text{for $1\le j\le k-1$ and $s_j\le i\le s_{j-1}-1$,}& \text{\small (4)}\\[2mm]
     ~[s_0,s_1-1,\dots,s_{k-1}-1],
     [s_0-1,s_1-1,\dots,s_{k-1}-1], \quad
        \text{if $s_{k-1}>0$.} & \text{\small (5)}\\
\end{array}
\end{multline*}
\end{theorem}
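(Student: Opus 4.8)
The plan is to mirror the proof of Theorem~\ref{thm:kpart} almost verbatim, tracking the single structural change that distinguishes the enhanced setting: a fixed point can now raise the nesting index of certain semi-arcs. First I would fix an open partition diagram $\pi$ in $\widetilde{\Pi}^{(k)}$ with label $[s_0,\dots,s_{k-1}]$ and run through the same four vertex-addition cases used in Section~\ref{sec:openarcgentree}, verifying in each case that no enhanced $k+1$-nesting (regular or future) is created precisely when the stated restriction --- a closer or semi-transitory may not close a semi-arc belonging to a future enhanced $k$-nesting unless it is the topmost such semi-arc --- is respected. The argument that this restriction is both necessary and sufficient is identical to the one given before Theorem~\ref{thm:kpart}: since $\pi$ has no future enhanced $k+1$-nesting, no regular enhanced $k+1$-nesting can appear, and the only way a \emph{future} one appears is by closing a semi-arc that sits above an enhanced $k-1$-nesting together with an outer semi-arc still open.

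The heart of the proof is the bookkeeping on the label, and here I would treat the fixed-point case separately from the other three. For a semi-opener (case (2)), a semi-transitory (cases (3),(5)), or a closer (cases (4),(5)), the analysis is word-for-word that of Theorem~\ref{thm:kpart}: closing a semi-arc of enhanced nesting index $j-1$ bumps by one the index of every semi-arc above it that shared that index, which transforms $[s_0,\dots,s_{k-1}]$ into $[\,*,s_1-1,\dots,s_{j-1}-1,i,s_{j+1},\dots,s_{k-1}]$ with $s_j\le i\le s_{j-1}-1$, the leading coordinate being $s_0$ for a semi-transitory and $s_0-1$ for a closer, and the two rules in line (5) accounting for closing the very top semi-arc when $s_{k-1}>0$. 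The one genuinely new point is case (1): adding a fixed point creates an enhanced $2$-nesting out of every arc that already lies beneath some semi-arc, so every semi-arc whose enhanced nesting index was $0$ now has enhanced nesting index $\ge 1$. Since $s_1$ counts semi-arcs of index $\ge 1$, after the addition the new value of $s_1$ becomes $s_0$ (all semi-arcs now have index $\ge 1$ --- note $s_0$ is the total count), while $s_0$ itself and $s_2,\dots,s_{k-1}$ are unchanged; this yields the child $[s_0,s_0,s_2,\dots,s_{k-1}]$ appearing in line (1). One should also check that this never produces a future enhanced $k+1$-nesting, i.e.\ that it cannot push some $s_i$ past its predecessor for $i\ge 2$ --- it cannot, because a fixed point inside an arc only upgrades a $(j-1)$-nesting-plus-fixed-point to an enhanced $j$-nesting, so an index-$(i-1)$ semi-arc with $i\ge 2$ is unaffected, and the inequality chain $s_0\ge s_1\ge\dots$ is preserved.

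Finally I would observe that, exactly as in Theorem~\ref{thm:kpart}, the number of children and their labels depend only on the parent's label, so the diagrams of $\widetilde{\Pi}^{(k)}$ of size $n$ are in bijection with the nodes at level $n$ of the generating tree rooted at $[0,0,\dots,0]$ (the empty diagram has no semi-arcs and adding the first vertex reproduces the two children of the root, consistent with the rule since then only cases (1) and (2) apply and both give $[0,\dots,0]$ or $[1,0,\dots,0]$). The main obstacle --- such as it is --- is purely a matter of care rather than depth: getting the fixed-point transition $[s_0,s_1,\dots]\to[s_0,s_0,s_2,\dots]$ right, and confirming that no \emph{other} coordinate of the label can change when a fixed point is inserted and that no forbidden enhanced nesting sneaks in. Everything else is a transcription of the non-enhanced argument with ``nesting index'' uniformly replaced by ``enhanced nesting index'' and ``future $j$-nesting'' by ``future enhanced $j$-nesting''.
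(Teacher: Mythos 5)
Your proposal is correct and takes essentially the same approach as the paper, whose proof is just the identification of vertex types (1)--(5) carried over from Theorem~\ref{thm:kpart}, with the sole new ingredient being that an added fixed point promotes every enhanced-index-$0$ semi-arc to index $1$, yielding the child $[s_0,s_0,s_2,\dots,s_{k-1}]$. One small imprecision in your wording of the mechanism: the new fixed point, being rightmost, lies inside no closed arc and so does not ``create an enhanced $2$-nesting out of arcs beneath semi-arcs''; rather, the fixed point is itself an enhanced $1$-nesting lying to the right of the left endpoint of every open semi-arc, which is exactly why the index-$0$ semi-arcs (and only those) are promoted --- your resulting label update, and your check that indices $\ge 1$ are unaffected and that no forbidden (regular or future) enhanced $k+1$-nesting can arise, are correct.
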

\begin{proof}
The label for the addition of a fixed point, which is line (1), follows directly from the paragraph before this theorem. The labels for adding other types of vertices are obtained using the same arguments as in Theorem \ref{thm:kpart}.
\end{proof}

This succession rule can also be translated into a functional equation.
The translation is very similar to that described in Section \ref{sec:knonnesting}, the only difference being that adding a fixed point makes all semi-arcs have nesting index  at least $1$.
\begin{corollary}\label{cor:kepart}
The generating function for open partition diagrams with neither regular nor future enhanced $k+1$-nestings, with variable $v_i$ marking value $s_i$ in the label
and variable $z$ marking the number of vertices,
denoted by $P=P(v_0,v_1,\dots,v_{k-1})=P(v_0,v_1,\dots,v_{k-1};z)$, satisfies the
functional equation
\begin{multline*}
P=1+z\left(v_0P+\frac{1+v_0}{v_0v_1\dots v_{k-1}}\left(P-P(v_0,v_1,\dots,v_{k-2},0)\right)\right.\\
+\sum_{j=2}^{k-1} \frac{1+v_0}{v_0v_1\dots v_{j-1}(1-v_j)}\left(P-P(v_0,\dots,v_{j-2},v_{j-1}v_j,1,v_{j+1},\dots,v_{k-1})\right)\\
\left.+\frac{(1+v_0)P-(1+v_0v_1)P(v_0v_1,1,v_2,\dots,v_{k-1})}{v_0(1-v_1)}\right).
\end{multline*}
\end{corollary}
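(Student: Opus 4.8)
Our plan is to translate the succession rule of Theorem~\ref{thm:kepart} term by term into a functional equation for $P=\sum_{\mathbf{s},n}c_{\mathbf{s}}(n)\,v_0^{s_0}v_1^{s_1}\cdots v_{k-1}^{s_{k-1}}z^n$, where $c_{\mathbf{s}}(n)$ is the number of nodes at level $n$ of the generating tree with label $\mathbf{s}=[s_0,\dots,s_{k-1}]$, exactly as was done for $k=2$ in Section~\ref{sec:fcteq}. The root of the tree contributes the summand $1$, and each of the five families of children contributes $z$ times a sum of monomials, which we then re-express through $P$.

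First we would observe that cases (2)--(5) of Theorem~\ref{thm:kepart} coincide verbatim with cases (2)--(5) of Theorem~\ref{thm:kpart}, so their contributions are precisely those appearing in Corollary~\ref{cor:kpart}. Concretely: the semi-opener (case (2)) multiplies each monomial by $v_0$, contributing $zv_0P$; for each $j$ with $1\le j\le k-1$, cases (3) and (4) together turn a parent monomial $v_0^{s_0}\cdots v_{k-1}^{s_{k-1}}$ into $(1+v_0^{-1})\sum_{i=s_j}^{s_{j-1}-1}v_0^{s_0}v_1^{s_1-1}\cdots v_{j-1}^{s_{j-1}-1}v_j^{\,i}v_{j+1}^{s_{j+1}}\cdots v_{k-1}^{s_{k-1}}$, and summing the finite geometric series $\sum_{i=s_j}^{s_{j-1}-1}v_j^{\,i}=(v_j^{s_j}-v_j^{s_{j-1}})/(1-v_j)$ and rewriting through $P$ yields $\tfrac{z(1+v_0)}{v_0v_1\cdots v_{j-1}(1-v_j)}\bigl(P-P(v_0,\dots,v_{j-2},v_{j-1}v_j,1,v_{j+1},\dots,v_{k-1})\bigr)$; finally case (5) contributes $\tfrac{z(1+v_0)}{v_0v_1\cdots v_{k-1}}\bigl(P-P(v_0,\dots,v_{k-2},0)\bigr)$. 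As in the non-enhanced case, all the divisions are formal power-series operations, the numerators lying in the relevant ideals thanks to the constraint $s_0\ge s_1\ge\cdots\ge s_{k-1}\ge 0$.

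The only genuinely new step is case (1). A node with label $[s_0,s_1,\dots,s_{k-1}]$, i.e.\ monomial $v_0^{s_0}v_1^{s_1}\cdots v_{k-1}^{s_{k-1}}$, has a single fixed-point child, with label $[s_0,s_0,s_2,\dots,s_{k-1}]$, i.e.\ monomial $(v_0v_1)^{s_0}v_2^{s_2}\cdots v_{k-1}^{s_{k-1}}$. Summing over all nodes, the fixed-point contribution is therefore $z\,P(v_0v_1,1,v_2,\dots,v_{k-1})$: one first sets $v_1=1$ to erase $v_1^{s_1}$, then substitutes $v_0\mapsto v_0v_1$ to replace $v_0^{s_0}$ by $(v_0v_1)^{s_0}$. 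This replaces the plain $zQ$ term of the non-enhanced equation (Corollary~\ref{cor:kpart}).

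It remains to assemble the pieces. All contributions except the fixed-point term and the $j=1$ summand of the (3)+(4) sum group, as in the non-enhanced case, into $z\bigl(v_0P+\tfrac{1+v_0}{v_0v_1\cdots v_{k-1}}(P-P(v_0,\dots,v_{k-2},0))+\sum_{j=2}^{k-1}\tfrac{1+v_0}{v_0\cdots v_{j-1}(1-v_j)}(P-P(v_0,\dots,v_{j-2},v_{j-1}v_j,1,v_{j+1},\dots,v_{k-1}))\bigr)$, which is all but the last term of the claimed equation. The leftover is $z\,P(v_0v_1,1,v_2,\dots,v_{k-1})$ plus the $j=1$ summand $\tfrac{z(1+v_0)}{v_0(1-v_1)}\bigl(P-P(v_0v_1,1,v_2,\dots,v_{k-1})\bigr)$; adding these and simplifying via $1-\tfrac{1+v_0}{v_0(1-v_1)}=-\tfrac{1+v_0v_1}{v_0(1-v_1)}$ produces $\tfrac{z\bigl((1+v_0)P-(1+v_0v_1)P(v_0v_1,1,v_2,\dots,v_{k-1})\bigr)}{v_0(1-v_1)}$, the last term. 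The step we expect to be the main (minor) obstacle is handling the $j=1$ boundary case of the substitution $P(v_0,\dots,v_{j-2},v_{j-1}v_j,1,v_{j+1},\dots)$ and recognising that it equals the fixed-point substitution $P(v_0v_1,1,v_2,\dots)$ — this coincidence is exactly what allows the two stray terms to merge into the single clean denominator $v_0(1-v_1)$; once this is noticed, the rest is the same bookkeeping and geometric-sum manipulation as in the proof of Corollary~\ref{cor:uv}.
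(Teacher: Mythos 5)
Your proposal is correct and follows exactly the route the paper intends (the paper gives no written proof beyond saying the succession rule of Theorem~\ref{thm:kepart} translates into the equation, just as Theorem~\ref{thm:3part} gave Corollary~\ref{cor:uv}): cases (2)--(5) are handled verbatim as in Corollary~\ref{cor:kpart}, the fixed-point rule gives $zP(v_0v_1,1,v_2,\dots,v_{k-1})$, and merging it with the $j=1$ geometric-sum term via $1-\tfrac{1+v_0}{v_0(1-v_1)}=-\tfrac{1+v_0v_1}{v_0(1-v_1)}$ yields the last term, exactly as you compute.
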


Again, $P(0,v_1,\dots,v_{k-1};z)$, which is a function of $z$ only, is the
generating function for partitions avoiding enhanced
$k+1$-nestings.

\subsection{Enumerative data}\label{sec:enumerative}
Corollaries~\ref{cor:kpart} and~\ref{cor:kepart} allow us to generate
data for the number of set partitions of size $n$ avoiding
$k+1$-nestings and avoiding enhanced $k+1$-nestings for small $k$ and
$n$.

We iterate the functional equations to get series
information. Specifically, we view each equation $F=1+z\Phi(F)$ as the
system $F^{[n]}=1+z\Phi(F^{[n-1]})$. Upon setting $F^{[0]}=1$, we
iterate to get successive terms in the series expansion. After $n$
iterations, we obtain the correct coefficients for $z^i$ for $0\le i\le n$, since the
functional equation is of the form $F= 1+z\Phi(F)$, where $\Phi$ is
linear in $F$ and its evaluations. Setting the catalytic variables
(i.e., those other than $z$) to $0$ results in the univariate generating series for
complete diagrams.

Tables~\ref{tab:knonnestingpartitions} and~\ref{tab:enhanced}
present the initial counting sequences, and relevant references to the
On-line Encyclopedia of Integer Sequences~\cite{oeis} for
completeness. Note that the first few terms (presented in gray)
coincide with the Bell numbers, as nesting conditions require a
minimum number of vertices. We are able to generate many more terms than
listed here. For $3$-, $4$- and $5$-nonnesting set partitions, we have
generated terms for $n$ up to $420$, $276$ and $129$, respectively.

\begin{table}[htb]
\small
\begin{tabular}{ccl}
$k+1$ & OEIS & Initial terms \\
3 &A108304 & \bt{l} \textcolor{gray}{1, 2, 5, 15, 52,} 202, 859, 3930, 19095, 97566, 520257, 2877834, 16434105, 96505490, 580864901,\\
3573876308, 22426075431, 143242527870, 929759705415, 6123822269373, 40877248201308  \et \\
4 &A108305 & \bt{l} \textcolor{gray}{1, 2, 5, 15, 52, 203, 877,} 4139, 21119, 115495, 671969, 4132936, 26723063, 180775027, 1274056792,\\
9320514343, 70548979894, 550945607475, 4427978077331, 36544023687590, 309088822019071 \et \\
5 &A192126 & \bt{l} \textcolor{gray}{1, 2, 5, 15, 52, 203, 877, 4140, 21147,} 115974, 678530, 4212654, 27627153, 190624976, 1378972826, \\
 10425400681, 82139435907, 672674215928, 5712423473216, 50193986895328, 455436027242590 \et \\
6 & A192127& \bt{l} \textcolor{gray}{1, 2, 5, 15, 52, 203, 877, 4140, 21147, 115975, 678570,} 4213596, 27644383, 190897649, 1382919174, \\
10479355676, 82850735298, 681840170501, 5828967784989, 51665915664913, 473990899143781 \et \\
7 &A192128 & \bt{l} \textcolor{gray}{1, 2, 5, 15, 52, 203, 877, 4140, 21147, 115975, 678570, 4213597, 27644437,} 190899321, 1382958475, \\
10480139391, 82864788832, 682074818390, 5832698911490, 51723290618772, 474853429890994 \et
\end{tabular}
\smallskip
\caption{Counting sequences for $k+1$-nonnesting set partitions.}
\label{tab:knonnestingpartitions}
\end{table}

\begin{table}[htb]
\small
\begin{tabular}{ccl}
$k+1$ & OEIS & Initial terms \\
3 &A108307 & \bt{l}\textcolor{gray}{1, 2, 5, 15,} 51, 191, 772, 3320, 15032, 71084, 348889, 1768483, 9220655, 49286863, 269346822,\\
1501400222, 8519796094, 49133373040, 287544553912, 1705548000296, 10241669069576 \et \\
4 &A192855& \bt{l} \textcolor{gray}{1, 2, 5, 15, 52, 203,} 876, 4120, 20883, 113034, 648410, 3917021, 24785452, 163525976, 1120523114,\\
7947399981, 58172358642, 438300848329, 3391585460591, 26898763482122, 218263920521938 \et \\
5 &A192865& \bt{l} \textcolor{gray}{1, 2, 5, 15, 52, 203, 877, 4140,} 21146, 115945, 678012, 4205209, 27531954, 189486817, 1365888674,\\
 10278272450, 80503198320, 654544093035, 5511256984436, 47950929125540, 430240226306346 \et \\
6 & A192866& \bt{l} \textcolor{gray}{1, 2, 5, 15, 52, 203, 877, 4140, 21147, 115975,} 678569, 4213555, 27643388, 190878823, 1382610179,\\
 10474709625, 82784673008, 680933897225, 5816811952612, 51505026270176, 471875801114626 \et \\
7 &A192867 & \bt{l} \textcolor{gray}{1, 2, 5, 15, 52, 203, 877, 4140, 21147, 115975, 678570, 4213597}, 27644436, 190899266, 1382956734,\\
10480097431, 82863928963, 682058946982, 5832425824171, 51718812364549, 474782378367618 \et \\
\end{tabular}
\smallskip
\caption{Counting sequences for set partitions avoiding enhanced $k+1$-nestings.}
\label{tab:enhanced}
\end{table}

\subsection{Baxter Permutations}
\label{sec:baxter}
In the course of our analysis, we observed that the number of
\emph{open} partition diagrams of size $n$ with no (regular or future)
enhanced $3$-nesting equals~$B_{n+1}$, the Baxter number of
index~$n+1$ (OEIS A001181), for~$n$ at least up to $300$.  Baxter
numbers enumerate a wide variety of combinatorial objects, such as
Baxter permutations, monotone~$2$-line meanders~\cite{Bo81}, bipolar
orientations~\cite{BoBoFu10}, and triples of nonintersecting paths
\cite{FeFuNoOr11}. Based on the numerical evidence, we conjecture the
following.
\begin{conjecture}\label{conj:baxter}
  The number of open partition diagrams on~$n$ vertices with neither
  regular nor future enhanced $3$-nestings is~$B_{n+1}$, the number of
  Baxter permutations of length~$n+1$.
\end{conjecture}
It is natural to ask for a bijective correspondence between open
partition diagrams with no enhanced $3$-nestings and one of the above
sets counted by the Baxter numbers.  At least two different generating
trees for Baxter objects appear in the literature where the labels
have two components. However, our tree for $\widetilde{\Pi}^{(2)}$
described in Theorem~\ref{thm:kepart} differs from these two trees
already at the third level, which contains $6$ elements. One
difficulty in finding a bijection is that our objects lack a
fundamental symmetry that the other objects (and their generating
trees) enjoy.  This is the case, for example, for Baxter permutations,
which are those permutations $\sigma$ having no indices $i<j<k$ such
that $\sigma(j+1)<\sigma(i)<\sigma(k)<\sigma(j)$ or
$\sigma(j)<\sigma(k)<\sigma(i)<\sigma(j+1)$.

The functional equation from Corollary~\ref{cor:kepart}
for $k=2$, after some manipulation gives
\[{\frac { -uv+u{v}^{2}+z{u}^{2}v-z{u}^{2}{v}^{2}+zu+z}{v-1}}B
 \left( u,v \right) ={\frac { \left( uv+1 \right) zv}{v-1}}B \left( u
v,1 \right) -z \left( u+1 \right) B \left( u,0 \right) +uv.
\]
We have not been able to solve this equation using the kernel
method.

 The initial series development is
\begin{multline*}
B(u,v)=1+(u+1)z+(uv+u^2+2u+2)z^2+(v^2u^2+2vu^2+4uv+6u+3u^2+u^3+5)z^3\\
+(u^3v^3+7v^2u^2+2v^2u^3+16uv+11vu^2+3vu^3+20u+12u^2+4u^3+u^4+15)z^4+\ldots.
\end{multline*}
If Conjecture~\ref{conj:baxter} is true, it may reveal which subsets
of each of the above Baxter families correspond to completed diagrams,
i.e., partitions with no enhanced $3$-nesting. Another natural
followup question would be to determine what the parameters $u$ and
$v$ mark on these Baxter objects.

Finally, we wonder whether there exist generalizations of Baxter
objects which correspond to open partition diagrams avoiding enhanced
$k$-nestings. This might provide a connection between the conjectures
on the non-D-finiteness of generating functions for pattern-avoiding
permutations~\cite{NoZe96} and Conjecture~\ref{conj:nonDfin} for
$k$-nonnesting partitions for $k>3$.

\section{$k$-nonnesting permutations}
\label{sec:permutations}
There are some similarities between $k$-nonnesting permutations and
$k$-nonnesting set partitions, yet none of the techniques that have
been applied in the literature to the enumeration of $k$-nonnesting
set partitions~\cite{BoXi05,MiYe13} works for permutations. On the
other hand, our method can be modified to take advantage of these
similarities and deal with $k$-nonnesting permutations as well.  This
gives rise to the first substantial set of enumerative data on
$k$-nonnesting permutations. Previously, only data up to $n=12$ was
known, and obtaining it required extensive computation.

First, we recall how to represent a permutation as an arc
diagram. This representation was first used by
Corteel~\cite{Cort07}, and in a modified form by
Elizalde~\cite{Eliz11}. It is essentially a drawing of the
cycle structure of the permutation. Given~$\sigma\in\mathfrak{S}_n$,
the diagram of~$\sigma$ has an arc between~$i$ and $\sigma(i)$ for
each~$i$ from 1 to~$n$, and the arc is drawn above the vertices (and
is called an \emph{upper arc}) if $i\leq \sigma(i)$, and below the
vertices (and is called a \emph{lower arc}) if $i>\sigma(i)$. %
We call such a representation a \emph{permutation diagram} of size $n$.
In this notation, a subset of $k$ arcs is a \emph{$k$-nesting} if either
\begin{enumerate}
\item all $k$ arcs are upper arcs and form an enhanced $k$-nesting with the definition from Section~\ref{sec:enhanced} (considering arcs of the form $(i,i)$ to be fixed points);
\item all $k$ arcs are lower arcs and form a $k$-nesting with the definition from Section~\ref{sec:intro}.
\end{enumerate}
We call the two above possibilities \emph{upper enhanced $k$-nestings}
and \emph{lower $k$-nestings}, respectively.  The reason behind the
slight dissymmetry in the definition comes from the original paper of
Corteel~\cite{Cort07}, where this was necessary for bijections between
certain classes of permutations.  Burrill \emph{et
  al.}~\cite{BuMiPo10} maintained this dissymmetry. In any case, the
construction we provide can be easily adapted to make a uniform
treatment of upper and lower arcs. For clarity, we  draw an
arc from $i$ to itself when $\sigma(i)=i$, since  such a fixed point
contributes to enhanced $k$-nestings of upper arcs.
Figure~\ref{fig:permutation} shows an example of a permutation
diagram.

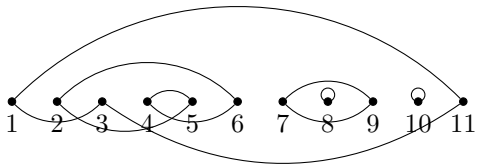
\begin{figure}[htb]
\begin{tikzpicture}[scale=0.6]
 \foreach \i in {1,...,11}
     \node[pnt,label=below:$\i$] at (\i,0)(\i) {};
\draw(1)  to [bend left=45] (11);
\draw(1)  to [bend left=315](3);
\draw(3)  to [bend left=325](11);
\draw(2)  to [bend left=45] (6);
\draw(4)  to [bend left=315] (6);
\draw(4)  to [bend left=45](5);
\draw(2)  to [bend left=315] (5);
\draw(7)  to [bend left=45] (9);
\draw(7)  to [bend left=315] (9);
\draw(8, 0.15) circle (0.15);
\draw(10, 0.15) circle (0.15);
\end{tikzpicture}
\caption{The permutation $\sigma=(1\,11\,3)(2\,6\,4\,5)(7\,9)(8)(10)$
  and its arc diagram representation. This diagram has two $3$-nestings:
  $\{(1,11) ,(2,6),(4,5)\}$ and $\{(1,11),(7,9), (8,8)\}$.}
\label{fig:permutation}
\end{figure}

As in the case of set partitions, there is a related definition
of~$k$-crossings in permutations. In~\cite{BuMiPo10}, the authors proved that
$k$-noncrossing and~$k$-nonnesting permutations are
equinumerous. However, they present very limited enumerative results,
which we significantly improve upon here.

We remark that the subset of upper arcs forms a set partition, and so
does the subset of lower arcs.  However, these two partitions are not
independent.  Their relationship is explicitly described
in~\cite{BuMiPo10}. This interpretation as a pair of partitions allows
us to extend the construction from Section~\ref{sec:setpartitionsk}
quite naturally.

The vertices of a permutation diagram can be of five types:
\emph{fixed points}, \emph{openers}, \emph{closers}, \emph{upper transitories}, and
\emph{lower transitories}. A fixed point has no incident edges, an opener (resp. closer)
is the left (resp. right) endpoint of an upper and a lower arc, and an upper (resp. lower) transitory
vertex is the right endpoint of an upper (resp. lower) arc and the left endpoint of another.

\subsection{Generating tree for open permutation diagrams}\label{sec:gentreeperm}
Similarly to what we did for partitions, we consider a more general
class of objects that we call \emph{open permutation diagrams}, by
allowing three additional vertex types: \emph{openers}, \emph{upper
  semi-transitories}, and \emph{lower semi-transitories}. An opener is
the left endpoint of an upper and a lower semi-arc, and an upper
(resp. lower) transitory vertex is the right endpoint of an upper
(resp. lower) arc and the right endpoint of an upper (resp. lower)
semi-arc. Note that in an open permutation diagram, the number of
upper semi-arcs equals the number of lower semi-arcs. This is because
when a fixed point is added, no upper or lower arcs are added; when an
opener is added, the number of upper and lower semi-arcs both increase
by $1$; when an upper transitory is added, an upper semi-arc is
added then removed, while the lower semi-arcs remain unchanged;
similarly when a lower transitory is added, the lower arcs lose and
gain a semi-arc, and upper arcs remain unchanged; and lastly when a
closer is added, both upper and lower arcs lose a semi-arc.

An open permutation diagram can also be viewed as a permutation where each cycle of length $i$ can be coloured one of $i+1$ possible colours: one to indicate that the cycle has no semi-arcs,
and the other $i$ for the possible choices of an arc in the cycles to be converted into two semi-arcs.

\begin{figure}[htb]
\begin{tikzpicture}[scale=0.6]
\foreach \x in {1,2,3}{\node[pnt, label=below:$\x$] at (\x, 0){};};
\draw[bend left=45](1,0) to (2,0);
\draw[bend left=45](2,0) to (3,0);
\draw[bend left=-45](1,0) to (3,0);
\end{tikzpicture}
\hspace{0.5cm}
\begin{tikzpicture}[scale=0.6]
\foreach \x in {1,2,3}{\node[pnt, label=below:$\x$] at (\x, 0){};};
\draw[bend left=45](1,0) to (2,0);
\draw[bend left=45](2,0) to (3,0);
\draw[bend left=45](3,0) to (3.5, 0.5);
\draw[bend left=-45](1,0) to (3.5, -0.5);
\end{tikzpicture}
\hspace{0.5cm}
\begin{tikzpicture}[scale=0.6]
\foreach \x in {1,2,3}{\node[pnt, label=below:$\x$] at (\x, 0){};};
\draw[bend left=45](1,0) to (2,0);
\draw[bend left=45](2,0) to (3.5, 0.5);
\draw[bend left=-45](1,0) to (3,0);
\draw[bend left=-45](3,0) to (3.5, -0.5);
\end{tikzpicture}
\hspace{0.5cm}
\begin{tikzpicture}[scale=0.6]
\foreach \x in {1,2,3}{\node[pnt, label=below:$\x$] at (\x, 0){};};
\draw[bend left=45](1,0) to (3.5, 0.5);
\draw[bend left=45](2,0) to (3,0);
\draw[bend left=-45](1,0) to (3,0);
\draw[bend left=-45](2,0) to (3.5, -0.5);
\end{tikzpicture}

\caption{A permutation $\sigma=(123)$ with a cycle of length $3$ may
  be coloured in 4 different ways, or have either one of the existing arcs
  or no arcs converted to a semi-arc: $(123)$, $(123*)$,  $(12*3)$, and $(1*23)$. }
\end{figure}
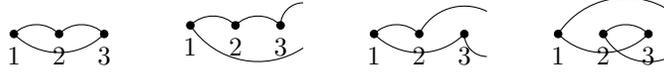

It follows that the exponential generating function for open permutation diagrams is $$\frac{1}{1-z}\exp\left(\frac{z}{1-z}\right).$$

To create an open permutation diagram of size~$n$ from one of
size~$n-1$, we add a vertex labelled~$n$ which can be of any of the five following types:
\begin{enumerate}
\item fixed point (\begin{tikzpicture}[scale=0.6] \node[pnt]at (0,0){}; \draw(0,0.15) circle (0.15);\end{tikzpicture});
\item opener (\begin{tikzpicture}\node[pnt] at (0,0){}; \draw[bend left=45](0,0) to (0.15, 0.15);\draw[bend left=-45](0,0) to (0.15, -0.15);\end{tikzpicture});
\item upper transitory (\begin{tikzpicture}\node[pnt] at (0,0){}; \draw[bend left=45](0,0) to (0.15, 0.15);\draw[bend left=45](-0.15, 0.15) to (0,0);\end{tikzpicture}) (provided there is an available upper semi-arc);
\item lower transitory (\begin{tikzpicture}\node[pnt] at (0,0){};\draw[bend left=-45](0,0) to (0.15, -0.15);\draw[bend left=-45](-0.15, -0.15) to (0,0);\end{tikzpicture}) (provided there is an available lower semi-arc);
\item closer (\begin{tikzpicture}\node[pnt] at (0,0){};\draw[bend left=45](-0.15, 0.15) to (0,0);\draw[bend left=-45](-.15, -0.15) to (0,0);\end{tikzpicture}) (provided there are available upper and lower semi-arcs).
\end{enumerate}
We can easily make a generating tree for the class of
open permutation diagrams by keeping track of the number $h$ of upper semi-arcs. The root has label $[0]$, and the succession rule is given by 

$$
[h] \rightarrow
\begin{array}{llcr}
~[h], && \quad & \text{(fixed point)} \\[2mm]
~[h+1], &&& \text{(opener)} \\[2mm]
~\underbrace{[h], [h], \dots, [h]}_{h \text{ copies}}, & \text{if $h>0$,} &&\text{(upper transitory)}\\[2mm]
~\underbrace{[h], [h], \dots, [h]}_{h \text{ copies}}, & \text{if $h>0$,} &&\text{(lower transitory)}\\[2mm]
~\underbrace{[h-1],  [h-1],\dots,   [h-1]}_{h^2 \text{ copies}}, &\text{if $h>0$.}&&\text{(closer)}\\[2mm]
\end{array}
$$

To incorporate the nesting constraint to the tree, we define a notion of future nestings in open permutation diagrams.
A \emph{future enhanced upper $k$-nesting} is an upper enhanced $k-1$-nesting together with an upper semi-arc
beginning to its left, and a \emph{future lower $k$-nesting} is a lower $k-1$-nesting together with a lower semi-arc
beginning to its left. An example of each is in Figure~\ref{fig:partial-permutation}.
The enhanced nesting index of an upper semi-arc is the largest~$j$ such that the semi-arc is in a future enhanced upper
$j+1$-nesting.
The nesting index of a lower semi-arc is the largest~$j$ such that the semi-arc is in a future lower $j+1$-nesting.
An open permutation diagram is called \emph{$k$-nonnesting} if it has no regular or future enhanced upper $k$-nesting, and no regular or future lower $k$-nesting.

\begin{figure}[htb]
\begin{tikzpicture}[scale=0.6]
\node[pnt, label=below:$1$] at (0,0)(1){};
\node[pnt,label=below:$2$] at (1,0)(2){};
\node[pnt,label=below:$3$] at (2,0)(3){};
\node[pnt,label=below:$4$] at (3,0)(4){};
\node[pnt,label=below:$5$] at (4,0)(5){};
\node[pnt,label=below:$6$] at (5,0)(6){};
\node[pnt,label=below:$7$] at (6,0)(7){};
\node[pnt,label=below:$8$] at (7,0)(8){};
\node[pnt,label=below:$9$] at (8,0)(9){};
\node[pnt,label=below:$10$] at (9,0)(10){};
\node[pnt,label=below:$11$] at (10,0)(11){};
\node[pnt,label=below:$12$] at (11,0)(12){};
\node[pnt,label=below:$13$] at (12,0)(13){};
\node[opnt,  label=right:$\quad\mathit 0$] at (13,0.5)(1a){};
\node[opnt,  label=right:$\quad\mathit 0$] at (13,1)(2a){};
\node[opnt,  label=right:$\quad\mathit 2$] at (13,1.5)(3a){};
\node[opnt, label=right:$\quad\mathit 0$] at (13,-0.5)(1b){};
\node[opnt, label=right:$\quad\mathit 0$] at (13,-1)(2b){};
\node[opnt, label=right:$\quad\mathit 1$] at (13,-1.5)(3b){};
\draw(1)  to [bend left=45] (11);
\draw(2)  to [bend left=45] (6);
\draw(5)  to [bend left=315] (6);
\draw(2)  to [bend left=315] (5);
\draw(7) to  [bend left=45] (12);
\draw(7) to  [bend left=315] (10);
\draw(8)  to [bend left=45] (9);
\draw(8)  to [bend left=315] (12);
\draw(10)  to [bend left=340] (2b);
\draw(1)  to [bend left=315] (9);
\draw(3) to [bend left=20] (3a);
\draw(11) to [bend left=20] (2a);
\draw(3) to [bend left=340] (3b);
\draw(13) to [bend left=20] (1a);
\draw(13) to [bend left=340] (1b);
\draw(3,0.15) circle (0.15);
\end{tikzpicture}
\caption{An open permutation diagram. The upper arcs $(7,12)$, $(8,9)$
  and the upper semi-arc $(3,*)$ form a future enhanced upper $3$-nesting. The lower arc $(7,10)$
  and the lower semi-arc $(3,*)$ form a future lower $2$-nesting. The nesting index of each
  semi-arc is labelled in italics.}
\label{fig:partial-permutation}
\end{figure}
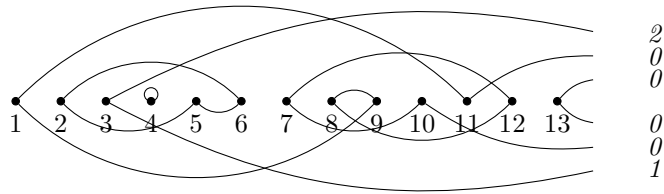

Again, it is convenient to shift the index and consider
$k+1$-nonnesting permutations from now on.  To each $k+1$-nonnesting
open permutation diagram, we associate a label consisting of a
$3$-tuple, $[h;\r;\s]$. Here, $h$ is the number of upper semi-arcs
(and hence also the number of lower semi-arcs),
$\r=[r_1,\dots,r_{k-1}]$ is a vector such that~$r_i$ is the number of
upper semi-arcs of enhanced nesting index greater than or equal
to~$i$, and $\s=[s_1,\dots,s_{k-1}]$ is a vector such that~$s_i$ is
the number of lower semi-arcs of nesting index greater than or equal
to~$i$. The label of the $4$-nonnesting diagram in
Figure~\ref{fig:partial-permutation} is $[3;1,1;1,0]$.

We note that in an open permutation diagram $\sigma$, if we consider
loops (which are cycles of length $1$) as fixed points, the upper
(resp. lower) arcs and semi-arcs form an open partition diagram
$\sigma^+$ (resp. $\sigma^-$) on the vertices $\{1,\dots,n\}$.  If the
label of $\sigma$ is $[h;\r;\s]$, then the label of $\sigma^+$ as
described in Theorem~\ref{thm:kpart} is $[h,\r]$, and the label of
$\sigma^-$ as described in Theorem~\ref{thm:kepart} is $[h,\s]$. In
particular, $h\ge r_1\ge\dots\ge r_{k-1}\ge 0$ and $h\ge
s_1\ge\dots\ge s_{k-1}\ge 0$.

\subsection{$3$-nonnesting permutations}
\label{sec:3perm}
Again, the case of $3$-nonnesting permutations is sufficiently insightful for the
general method without being overly complicated. In this section,  we describe the
generating tree for $3$-nonnesting open permutation diagrams, and we determine a functional equation for the
generating function.

\subsubsection{Generating tree}
The label of a $3$-nonnesting open permutation diagram is~$[h, r, s]$
(we use commas instead of semicolons in this section). Here, $2h$ is
the total number of semi-arcs, $r$ is the number of upper semi-arcs
that belong to a future enhanced upper $2$-nesting, and $s$ is the
number of lower semi-arcs that belong to a future lower
$2$-nesting. Notice that in this case the vectors $\r$ and $\s$ each
only have one component, thus we dispense with the formal vector
notation while working through this example. The empty diagram has
label $[0,0,0]$. An example of the labelling is given in
Figure~\ref{fig:labeldiagramperm}, where the numbers on the vertices
are omitted.  The arrows indicate semi-arcs that are in future
$2$-nestings: two upper semi-arcs and one lower semi-arc, hence the
label of the diagram is $[4, 2, 1]$.

\begin{figure}[ht]
\begin{tikzpicture}[scale=0.4]
\foreach \i in {1,...,11}
     \node[pnt] at (\i,0)(\i) {};
\node[label=right:$\quad\leftarrow$] at (11,1.5){};
\node[label=right:$\quad\leftarrow$] at (11,2){};
\node[label=right:$\quad\leftarrow$] at (11, -2){};
\draw(1)  to [bend left=45] (3) to  [bend left=20] (12, 2);
\draw(4)  to [bend left=45] (6);
\draw(5) to [bend left=45](8) to [bend left=30] (9) to [bend left=30] (10) to [bend left=20] (12,1);
\draw(7) to  [bend left=20] (12, 1.5);
\draw(11) to [bend left=20] (12, 0.5);
\draw(1)  to [bend left=-20](12, -2);
\draw(4) to [bend left=-45](6);
\draw(5) to [bend left=-20](12, -1.5);
\draw(7) to [bend left=-20](12, -1);
\draw(11) to [bend left=-20](12, -0.5);
\draw(2, 0.15) circle (0.15);
\end{tikzpicture}
\caption{{An arc diagram with label $[4, 2, 1]$.}}
\label{fig:labeldiagramperm}
\end{figure}
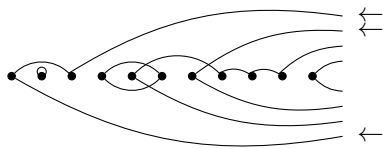

To predict the labels of the children of a $3$-nonnesting open permutation diagram, we consider the
different types of vertices that can be added. To avoid future $3$-nestings,
we are not allowed to add a closer or transitory vertex that closes a semi-arc belonging to a future enhanced upper $2$-nesting or a future lower $2$-nesting, unless it is an outermost semi-arc, i.e., the
top upper semi-arc or the bottom lower semi-arc. As an example,
the labels of the children of the diagram in Figure~\ref{fig:labeldiagramperm} are generated by adding vertices of different types as follows.
\begin{enumerate}
\item Fixed point: one child with label $[4,4,1]$, since the upper semi-arcs belong now to future enhanced upper $2$-nestings.
\item Opener: one child with label $[5,2,1]$.
\item Upper transitory: closing the upper semi-arcs that are not in future enhanced $2$-nestings gives the labels $[4,2,1]$ and $[4,3,1]$;
closing the top semi-arc (the only one in a $2$-nesting that we are allowed to close) removes one future upper $2$-nesting, giving the label $[4,1,1]$.
\item Lower transitory: all lower semi-arcs can be closed, and the four resulting labels are $[4,2,0]$, $[4,2,1]$, $[4,2,2]$ and $[4,2,3]$.
\item Closer: we simultaneously and independently close an upper and a lower semi-arc, among those that we are allowed to close. There are three choices for the
  former and four for the latter, giving twelve children with labels $[ 3,1,0]$, $[3,1,1]$, $[3,1,2]$, $[3,1,3]$, $[3, 2,0]$, $[3,2,1]$,  $[3,2,2]$, $[3,2,3]$, $[3, 3,0]$, $[3,3,1]$, $[3,3,2]$, $[3,3,3]$, that is, $\{3\} \times \{1,2,3\} \times \{0,1,2,3\}$.
\end{enumerate}
The succession rule for the generating tree is described in Theorem~\ref{thm:permgenscheme}.
Another example is drawn in Figure~\ref{fig:permchildren}, and the first few levels of the tree appear in Figure~\ref{fig:permtree}.

\begin{figure}[htb]
\center
\begin{tikzpicture}[scale=0.4]
\foreach \i in {1, 2}
     \node[partition] at (\i,0) {};
\draw[bend left=45](1,0) to (2.5,.6);
\draw[bend left=-45](1,0) to (2.5,-.6);
\draw[bend left=45](2,0) to (2.5,.4);
\draw[bend left=-45](2,0) to (2.4,-.4);
\node at (2,-2){[2,0,0]};
\end{tikzpicture}\\
{\begin{tikzpicture}[scale=0.4]
\draw(-0.5,0) to (-1, -1);
\draw(-0.4,0) to (-0.8, -1);
\draw(-0.3,0) to (-0.6, -1);
\draw(-0.2,0) to (-0.4, -1);
\draw(-0.1,0) to (-0.2, -1);
\draw(0,0) to (0.2, -1);
\draw(0.1,0) to (0.4, -1);
\draw(0.2,0) to (0.6, -1);
\draw(0.3,0) to (0.8, -1);
\draw(0.4,0) to (1, -1);
\end{tikzpicture}
\vspace{0.25cm}
}\\
\begin{tikzpicture}[scale=0.5] 
\foreach \i in {1, 2, 3}
     \node[partition] at (\i,0) {};
\draw[bend left=45](1,0) to (3.5,.6);
\draw[bend left=-45](1,0) to (3.5,-.6);
\draw[bend left=45](2,0) to (3.5,.4);
\draw[bend left=-45](2,0) to (3.5, -.4);
\node at (2,-2){[2,2,0]};
\draw(3, 0.15) circle (0.15);
\end{tikzpicture}
\hspace{0.3cm}
\begin{tikzpicture}[scale=0.5] 
\foreach \i in {1, 2, 3}
     \node[partition] at (\i,0) {};
\draw[bend left=45](1,0) to (3.5,.8);
\draw[bend left=-45](1,0) to (3.5,-.8);
\draw[bend left=45](2,0) to (3.5,.6);
\draw[bend left=-45](2,0) to (3.5, -.6);
\draw[bend left=45](3,0) to (3.5,.4);
\draw[bend left=-45](3,0) to (3.5, -.4);
\node at (2,-2){[3,0,0]};
\end{tikzpicture}
\hspace{0.3cm}
\begin{tikzpicture}[scale=0.5] 
\foreach \i in {1, 2, 3}
     \node[partition] at (\i,0) {};
\draw[bend left=45](1,0) to (3.5,.6);
\draw[bend left=-45](1,0) to (3.5,-.6);
\draw[bend left=45](2,0) to (3,0) to (3.5,.4);
\draw[bend left=-45](2,0) to (3.5, -.4);
\node at (2,-2){[2,1,0]};
\end{tikzpicture}
\hspace{0.3cm}
\begin{tikzpicture}[scale=0.5] 
\foreach \i in {1, 2, 3}
     \node[partition] at (\i,0) {};
\draw[bend left=45](1,0) to (3,0) to (3.5,.4);
\draw[bend left=-45](1,0) to (3.5,-.6);
\draw[bend left=45](2,0) to (3.5,.6);
\draw[bend left=-45](2,0) to (3.5, -.4);
\node at (2,-2){[2,0,0]};
\end{tikzpicture}
\begin{tikzpicture}[scale=0.5] 
\foreach \i in {1, 2, 3}
     \node[partition] at (\i,0) {};
\draw[bend left=45](1,0) to (3.5,.6);
\draw[bend left=-45](1,0) to (3.5,-.6);
\draw[bend left=45](2,0) to (3.5,.4);
\draw[bend left=-45](2,0) to (3,0) to (3.5, -.4);
\node at (2,-2){[2,0,1]};
\end{tikzpicture}
\hspace{0.3cm}
\begin{tikzpicture}[scale=0.5] 
\foreach \i in {1, 2, 3}
     \node[partition] at (\i,0) {};
\draw[bend left=45](1,0) to (3.5,.6);
\draw[bend left=-45](1,0) to (3,0) to (3.5,-.4);
\draw[bend left=45](2,0) to (3.5,.4);
\draw[bend left=-45](2,0) to (3.5, -.6);
\node at (2,-2){[2,0,0]};
\end{tikzpicture}

\hspace{0.3cm}
\begin{tikzpicture}[scale=0.5] 
\foreach \i in {1, 2, 3}
     \node[partition] at (\i,0) {};
\draw[bend left=45](1,0) to (3.5,.6);
\draw[bend left=-45](1,0) to (3.5,-.6);
\draw[bend left=45](2,0) to (3,0);
\draw[bend left=-45](2,0) to (3,0);
\node at (2,-2){[1,1,1]};
\end{tikzpicture}
\hspace{0.3cm}
\begin{tikzpicture}[scale=0.5] 
\foreach \i in {1, 2, 3}
     \node[partition] at (\i,0) {};
\draw[bend left=45](1,0) to (3.5,.6);
\draw[bend left=-45](1,0) to (3,0);
\draw[bend left=45](2,0) to (3,0);
\draw[bend left=-45](2,0) to (3.5,-.6);
\node at (2,-2){[1,1,0]};
\end{tikzpicture}
\hspace{0.3cm}
\begin{tikzpicture}[scale=0.5] 
\foreach \i in {1, 2, 3}
     \node[partition] at (\i,0) {};
\draw[bend left=45](1,0) to (3,0);
\draw[bend left=-45](1,0) to (3,0);
\draw[bend left=45](2,0) to (3.5,.6);
\draw[bend left=-45](2,0) to (3.5,-.6);
\node at (2,-2){[1,0,0]};
\end{tikzpicture}
\hspace{0.3cm}
\begin{tikzpicture}[scale=0.5] 
\foreach \i in {1, 2, 3}
     \node[partition] at (\i,0) {};
\draw[bend left=45](1,0) to (3,0);
\draw[bend left=-45](1,0) to (3.5,-.6);
\draw[bend left=45](2,0) to (3.5,.6);
\draw[bend left=-45](2,0) to (3,0);
\node at (2,-2){[1,0,1]};
\end{tikzpicture}
\caption{A $3$-nonnesting open permutation diagram and its children.}
\label{fig:permchildren}
\end{figure}
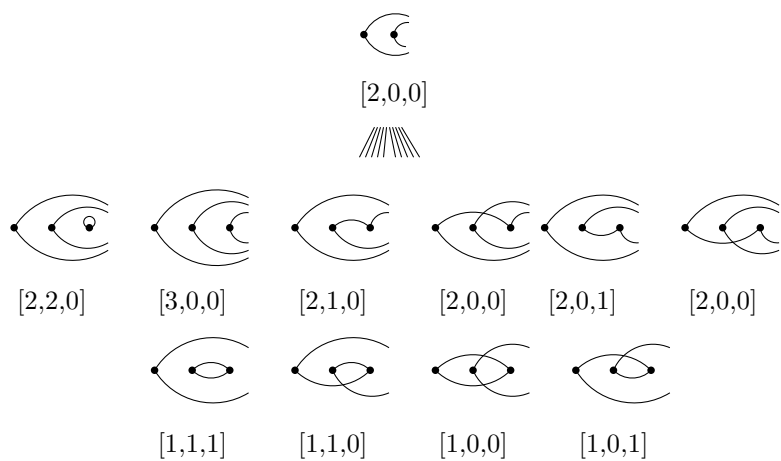

\begin{theorem}\label{thm:permgenscheme}
  Let~$\Sigma^{(2)}$ be the set of $3$-nonnesting open permutation
  diagrams. To each diagram~$\sigma$, associate the label
  $\ell(\sigma)=[h,r,s]$, where $2h$ is the total number of semi-arcs,
  $r$ is the number of semi-arcs in a future enhanced upper $2$-nesting
  and $s$ is the number of semi-arcs in a future lower
  $2$-nesting. Then, the number of diagrams in $\Sigma^{(2)}$ of size
  $n$ is the number of nodes at level $n$ in the generating tree with
  root label $[0,0,0]$, and succession rule given by
\begin{equation*}
[h, r, s] \rightarrow
\begin{array}{llr}
~[h, h, s], &&\text{\small (1)} \\[2mm]
~[h+1,r,s],&&\text{\small (2)}  \\[2mm]
~[h, i, s],& \text{for $\max\{0,r-1\}\le i\le h-1$,} &\text{\small (3)}\\[2mm]
~[h, r, j],& \text{for $\max\{0,s-1\}\le j\le h-1$,} &\text{\small (4)}\\[2mm]
~[h-1, i, j],&\text{for $\max\{0,r-1\}\le i\le h-1$ and $\max\{0,s-1\}\le j\le h-1$.} & \text{\small (5)}\\
\end{array}
\end{equation*}
The number of $3$-nonnesting permutations of size $n$ is equal to the
number of nodes with label $[0,0,0]$ at the $n$-th level of this generating
tree.
\end{theorem}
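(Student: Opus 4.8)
The plan is to follow the template of the proof of Theorem~\ref{thm:3part}, but to carry it out simultaneously for the two partition structures hidden inside an open permutation diagram: its upper part $\sigma^{+}$ (the upper arcs and semi-arcs, with loops $\sigma(i)=i$ regarded as fixed points) and its lower part $\sigma^{-}$ (the lower arcs and semi-arcs). As observed just before this section, a label $[h,r,s]$ of $\sigma$ records the enhanced label $[h,r]$ of $\sigma^{+}$ in the sense of Theorem~\ref{thm:kepart} (with $k=2$) together with the regular label $[h,s]$ of $\sigma^{-}$ in the sense of Theorem~\ref{thm:3part}. The first step is to note that appending a new rightmost vertex $n$ to $\sigma$ adds a vertex $n$ to each of $\sigma^{+}$ and $\sigma^{-}$, and that the $3$-nonnesting condition on $\sigma$ splits cleanly into ``$\sigma^{+}$ has neither a regular nor a future enhanced upper $3$-nesting'' and ``$\sigma^{-}$ has neither a regular nor a future lower $3$-nesting'', since an enhanced upper nesting involves only upper arcs and a lower nesting involves only lower arcs. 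Moreover, since $n$ is the rightmost vertex it can never lie inside a closed arc, only inside semi-arcs; so, exactly as in Sections~\ref{sec:nesting} and~\ref{sec:gentree3}, the only way that adding $n$ can create a forbidden configuration is to close a semi-arc of a future $2$-nesting that is not the outermost one, and no regular $3$-nesting can appear because $\sigma$ had no future $3$-nesting to begin with.

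The second step is a walk through the five types of vertex, reading the label update off Theorems~\ref{thm:3part} and~\ref{thm:kepart}. \emph{Fixed point:} $n$ is a genuine fixed point of $\sigma$, so for enhanced upper nestings it behaves like a fixed point; by the fixed-point rule of Theorem~\ref{thm:kepart} the label of $\sigma^{+}$ becomes $[h,h]$, while a fixed point is irrelevant to regular lower nestings and $\sigma^{-}$ keeps $[h,s]$, yielding $[h,h,s]$. \emph{Semi-opener:} $n$ is a semi-opener in both $\sigma^{+}$ and $\sigma^{-}$, yielding $[h+1,r,s]$. \emph{Upper semi-transitory:} $n$ is a semi-transitory in $\sigma^{+}$, and the semi-transitory children in Theorem~\ref{thm:kepart} are precisely $[h,i]$ for $\max\{0,r-1\}\le i\le h-1$; in $\sigma^{-}$ the vertex $n$ has no lower edges, so the number of lower semi-arcs and all lower nestings are untouched, yielding $[h,i,s]$. \emph{Lower semi-transitory:} symmetrically $\sigma^{-}$ undergoes a regular semi-transitory move, giving $[h,j]$ for $\max\{0,s-1\}\le j\le h-1$; and although $n$ is empty in $\sigma^{+}$, it is not a genuine fixed point of $\sigma$, hence it creates no enhanced upper $1$-nesting and leaves every enhanced nesting index of the upper semi-arcs unchanged, so $\sigma^{+}$ keeps $[h,r]$ and the result is $[h,r,j]$. \emph{Closer:} $n$ is a closer in both $\sigma^{+}$ and $\sigma^{-}$; the closer children are $[h-1,i]$ with $\max\{0,r-1\}\le i\le h-1$ for $\sigma^{+}$ and $[h-1,j]$ with $\max\{0,s-1\}\le j\le h-1$ for $\sigma^{-}$, and since closing an upper semi-arc with left end-point $a$ amounts to setting $\sigma(a)=n$ while closing a lower semi-arc with left end-point $b$ amounts to setting the independent value $\sigma(n)=b$, every pair $(i,j)$ arises, yielding $[h-1,i,j]$. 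Collecting these five cases gives exactly the succession rule in the statement.

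The last step is routine: in every case the set of children and their labels depends only on $[h,r,s]$, so the above really is a generating tree; the empty diagram has no semi-arcs and label $[0,0,0]$, so it is the root; and a $3$-nonnesting permutation of size $n$ is exactly a $3$-nonnesting open permutation diagram of size $n$ with no semi-arcs, i.e.\ with $h=0$, which forces $r=s=0$ because $h\ge r\ge0$ and $h\ge s\ge0$, so these are the level-$n$ nodes with label $[0,0,0]$. The step I expect to be the main obstacle is the bookkeeping for the two semi-transitory moves (and, implicitly, for the two halves of a closer): one must be careful that adding a lower semi-transitory vertex --- which looks like a fixed point when restricted to $\sigma^{+}$ --- does \emph{not} raise any enhanced nesting index among the upper semi-arcs, whereas adding a genuine fixed point of $\sigma$ \emph{does} raise all of them. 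The resolution is the convention from Section~\ref{sec:permutations} that only the true fixed points $(i,i)$ of $\sigma$, not arbitrary degree-zero vertices of $\sigma^{+}$, participate in enhanced upper nestings; this is precisely what separates rule (1) from rules (3) and (4), and getting it right is the crux of the proof.
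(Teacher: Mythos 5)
Your proposal is correct and follows essentially the same route as the paper: identify the five cases of the succession rule with the five types of added vertices, and read the label updates off the upper/lower decomposition $\sigma^{+},\sigma^{-}$ into open partition diagrams (with enhanced nestings above and regular nestings below), which is exactly the setup the paper establishes before the theorem and invokes in its proof. Your explicit treatment of the subtlety that only genuine fixed points $(i,i)$, not lower semi-transitory vertices, contribute to enhanced upper nestings matches the paper's convention and its example computation.
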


\begin{proof}The labels correspond to the addition of the following vertices to a diagram of $\sigma$:
\begin{enumerate}
\item A fixed point, which results in all the upper semi-arcs becoming part of future enhanced $2$-nestings;
\item an opener, which produces a new upper semi-arc and a new lower one, neither of which is in a future $2$-nesting;
\item an upper transitory closing a semi-arc not belonging to a future enhanced upper $2$-nesting or, if $r>0$, possibly closing the top semi-arc;
\item a lower transitory closing a semi-arc not belonging to a future lower $2$-nesting or, if $s>0$, possibly closing the bottom semi-arc;
\item a closer, which can close any combination of an upper and a lower semi-arc among those allowed to close in parts~(3) and~(4).
\end{enumerate}
\end{proof}

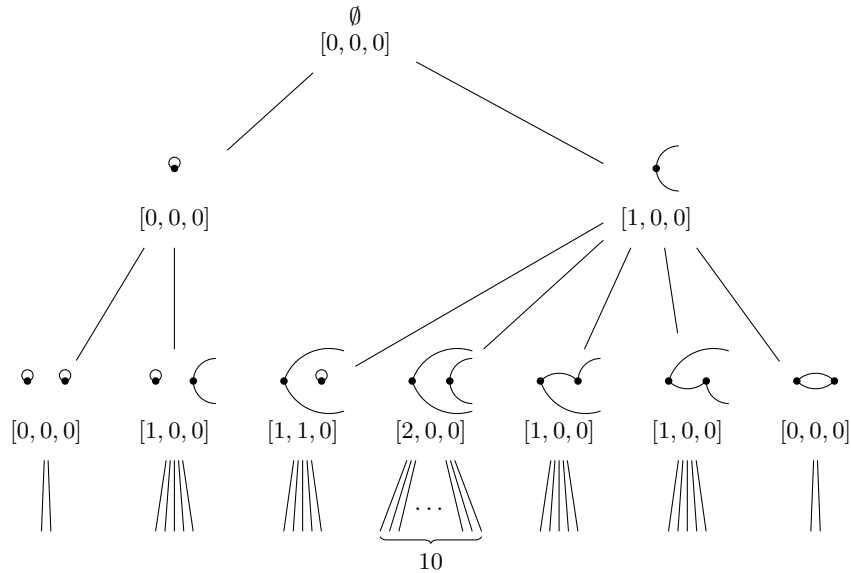
\begin{figure}[htb]
\begin{center}\small
\tikzset{partlabel/.style={below=0.4}}
\sbox{\Paone}{%
   \begin{tikzpicture}[part]
      \node {$\emptyset$};
      \node[partlabel,below=3pt] {$[0,0,0]$};
   \end{tikzpicture}
}
\sbox{\Pbone}{%
   \begin{tikzpicture}[part]
      \node[partition] {};
      \node[partlabel] {$[0,0,0]$};
      \draw(0, 0.15) circle (0.15);
   \end{tikzpicture}%
}
\sbox{\Pbtwo}{%
   \begin{tikzpicture}[part]%
      \node[partition] at (0,0) {};
      \draw (0,0) to (0.6,0.6);
      \draw[bend right] (0,0) to (0.6,-0.6);
      \node[partlabel] {$[1,0,0]$};

   \end{tikzpicture}%
}
\sbox{\Pcone}{%
   \begin{tikzpicture}[part]%
      \foreach \i in {0,1} \node[partition] at (\i,0) {};
      \node[partlabel] at (0.5,0) {$[0,0,0]$};
   \draw(0, 0.15) circle (0.15);
     \draw(1, 0.15) circle (0.15);
   \end{tikzpicture}%
}
\sbox{\Pctwo}{%
   \begin{tikzpicture}[part]%
      \foreach \i in {0,1} \node[partition] at (\i,0) {};
      \node[partlabel] at (0.5,0) {$[1,0,0]$};
      \draw (1,0) to (1.6,0.6);
      \draw[bend right] (1,0) to (1.6,-0.6);
\draw(0, 0.15) circle (0.15);
   \end{tikzpicture}%
}
\sbox{\Pcthree}{%
   \begin{tikzpicture}[part]%
      \foreach \i in {0,1} \node[partition] at (\i,0) {};
      \node[partlabel] at (0.5,0) {$[1,1,0]$};
      \draw (0,0) to (1.6, 0.8);
      \draw[bend right] (0,0) to (1.6,-0.8);
   \draw(1, 0.15) circle (0.15);
   \end{tikzpicture}%
}
\sbox{\Pcfour}{%
   \begin{tikzpicture}[part]%
      \foreach \i in {0,1} \node[partition] at (\i,0) {};
      \node[partlabel] at (0.5,0) {$[2,0,0]$};
      \draw (0,0) to (1.6, 0.8);
      \draw[bend right] (0,0) to (1.6,-0.8);
      \draw (1,0) to (1.6, 0.6);
      \draw[bend right] (1,0) to (1.6,-0.6);
   \end{tikzpicture}%
}
\sbox{\Pcfive}{%
   \begin{tikzpicture}[part]%
      \foreach \i in {0,1} \node[partition] at (\i,0) {};
      \node[partlabel] at (0.5,0) {$[1,0,0]$};
      \draw (0,0) to (1,0);
      \draw (1,0) to (1.6, 0.6);
      \draw[bend right] (0,0) to (1.6, -0.8);
   \end{tikzpicture}%
}
\sbox{\Pcsix}{%
   \begin{tikzpicture}[part]%
      \foreach \i in {0,1} \node[partition] at (\i,0) {};
      \node[partlabel] at (0.5,0) {$[1,0,0]$};
      \draw[bend right] (0,0) to (1,0);
      \draw (0,0) to (1.6, 0.8);
      \draw[bend right] (1,0) to (1.6,-0.6);
   \end{tikzpicture}%
}
\newsavebox{\Pcseven}%
\sbox{\Pcseven}{%
   \begin{tikzpicture}[part]%
      \foreach \i in {0,1} \node[partition] at (\i,0) {};
      \node[partlabel] at (0.5,0) {$[0,0,0]$};
      \draw (0,0) to (1,0);
      \draw[bend right] (0,0) to (1,0);
   \end{tikzpicture}%
}
%
%
\begin{tikzpicture}
   \tikzset{node distance=1 and 1}%
   \node (a1) {\usebox\Paone};
   \node [below left=of a1] (b1) {\usebox\Pbone};
   \tikzset{node distance=2 and 5}%
   \node [base right=of b1] (b2) {\usebox\Pbtwo};
   \foreach \i in {b1, b2}
      \path (a1) edge (\i);
   \tikzset{node distance=1.5 and 0.3}%
   \node [below left=of b1] (c1) {\usebox\Pcone};
   \node [base right=of c1] (c2) {\usebox\Pctwo};
   \foreach \i in {c1, c2}
      \path (b1) edge (\i);
   \node [base right=of c2] (c3) {\usebox\Pcthree};
   \node [base right=of c3] (c4) {\usebox\Pcfour};
   \node [base right=of c4] (c5) {\usebox\Pcfive};
   \node [base right=of c5] (c6) {\usebox\Pcsix};
   \node [base right=of c6] (c7) {\usebox\Pcseven};
   \foreach \i in {c3, c4, c5, c6, c7}
      \path (b2) edge (\i);
   \foreach \i/\j in {c1/2, c2/5, c3/5, c5/5, c6/5, c7/2} {
      \foreach \k in {1,...,\j}
         \path (\i.base) -- +(\k/8-\j/16-1/16, -2) edge (\i);
   }
   \foreach \k in {1,2,3} {
      \path (c4.base) -- +(-\k/8-0.3,-2) edge (c4);
      \path (c4.base) -- +( \k/8+0.3,-2) edge (c4);
   }
   \path (c4.base) -- +(0,-1.7) node{$\dots$};
   \path (c4.base) -- +(-3/8-0.3,-2) coordinate (c4braceleft);
   \path (c4.base) -- +( 3/8+0.3,-2) coordinate (c4braceright);
   \draw[decorate,decoration={mirror,brace,raise=2pt}]
      (c4braceleft) -- node[below=5pt]{$10$} (c4braceright);
\end{tikzpicture}
\end{center}
\caption{Generating tree for $3$-nonnesting open permutation diagrams. The first few levels coincide with those of the tree for all open permutation diagrams.}
\label{fig:permtree}
\end{figure}

\subsubsection{Functional equation}
\label{subsub:funeq}
We translate the generating tree from Theorem~\ref{thm:permgenscheme} into a functional equation for the multivariate
generating function~$F(u,v,w)= \sum F_{h,r,s}(n) u^h v^r w^s z^n$ where~$F_{h,r,s}(n)$ is the number of open permutation arc diagrams at
level $n$ with label $[h, r, s]$. The coefficient~$F_{0,0,0}(n)$
is the number of $3$-nonnesting permutations of $\{1,2,\dots,n\}$.

The translation process to derive a functional equation is analogous to the set partition case. We consider the
five types of vertices in turn to analyze their contribution. The
form of the functional equation is
\[
F(u,v,w) = 1+z(\Psi_1 +\Psi_2+\Psi_3+\Psi_4+\Psi_5),
\]
where $\Psi_i$ is the contribution for adding a vertex of type $(i)$ for $1\le i\le 5$, which we compute next.
\begin{enumerate}
\item Fixed point. Note that case $(1)$ in the succession rule can alternatively be included by extending the range of $i$ in case $(3)$ to include $h$. Thus, it is simpler to compute $\Psi_1+\Psi_3$ in item $(3)$ below.
\item Opener.  $\Psi_2=uF(u,v,w)$.
\item Upper transitory and fixed point. $\Psi_1+\Psi_3=\frac{F(u,v,w) - vF(uv,1,w)}{1-v}
     +\frac{F(u,v,w) - F(u,0,w) }{v}$, found using the formula for a finite geometric sum in the expressions below:
\begin{alignat*}{2}
   & \sum_{h,s,n}F_{h,0,n}(n) u^h (1+v+v^2+ \dots + v^h) w^s z^n
   & \qquad &\text{if $r=0$},
\\
   & \sum_{h,s,n}F_{h,r,s}(n) u^h(v^{r-1}+ v^r +\dots + v^h) w^s z^n
   &\qquad &\text{if $0<r\le i$}.
\end{alignat*}

\item Lower transitory. $\Psi_4=\frac{F(u,v,w) - F(uw,v,1)}{1-w}
     +\frac{F(u,v,w) - F(u,v,0) }{w}$.
 \item Closer.  The addition of a closer to a diagram with label $[h,r,s]$ contributes
\begin{equation*}
\sum_{h, n} F_{h,r,s}(n)u^{h-1} (v^{\max\{r-1,0\}}+ \dots+ v^{h-1}) ( w^{\max\{s-1,0\}} +  \dots + w^{h-1})z^n,
\end{equation*}
which can be simplified using finite geometric sum formulas, and separating the case when $r=0$ or $s=0$:
 \begin{multline*}
\Psi_5= \frac{F(u,v,w)-F(uv,1,w)-F(uw,v,1)+F(uvw,1,1)}{u(1-v)(1-w)}
\\
    + \frac{F(u,v,w)-F(u,0,w)-F(uw,v,1)+F(uw, 0,1)}{uv(1-w)}
\\
    + \frac{F(u,v,w)-F(u,v,0)-F(uv,1,w)+F(uv, 1, 0)}{uw(1-v)}
\\
    + \frac{F(u,v,w)-F(u,0, w)-F(u,v,0)+F(u,0,0)}{uvw}.
\end{multline*}
\end{enumerate}
Adding all five contributions, we get the following corollary.
\begin{corollary}\label{cor:genfun3nonperm}
The generating function for $3$-nonnesting open permutation diagrams, denoted
\[
   F(u,v,w)=F(u,v,w;z)
   = \sum_{h,r,s,n}F_{h,r,s}(n) u^h v^r w^s z^n,
\]
where $F_{h,r,s}(n)$ is the number of
diagrams of size~$n$ with label $[h, r, s]$, satisfies the
functional equation
\begin{multline*}
F(u,v,w)=1+ z\biggl( uF(u,v,w)\\
+ \frac{F(u,v,w)-v(uv, 1,w)}{1-v} + \frac{F(u,v,w)-F(u,0,w)}{v} + \frac{F(u,v,w)-F(uw,v,1)}{1-w}\\
+\frac{F(u,v,w)-F(u,v,0)}{w} +
   \frac{F(u,v,w)-F(uw,1,w)-F(uw,v,1)+F(uvw,1,1)}{u(1-v)(1-w)}
\\
    + \frac{F(u,v,w)-F(u,0,w)-F(uw,v,1)+F(uw, 0,1)}{uv(1-w)}
\\
    + \frac{F(u,v,w)-F(u,v,0)-F(uv,1,w)+F(uv, 1, 0)}{uw(1-v)}
\\
    + \frac{F(u,v,w)-F(u,0, w)-F(u,v,0)+F(u,0,0)}{uvw}
   \biggr).
\end{multline*}
\end{corollary}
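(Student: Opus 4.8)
The plan is to translate the succession rule of Theorem~\ref{thm:permgenscheme} directly into an identity for $F(u,v,w;z)$, mirroring the passage from Theorem~\ref{thm:3part} to Corollary~\ref{cor:uv} in the set partition case. First I would record three facts: a diagram of size $n$ with label $[h,r,s]$ contributes the monomial $u^hv^rw^sz^n$ to $F$; the root (the empty diagram, label $[0,0,0]$) contributes the constant term $1$; and every non-root node of the generating tree is the unique child of exactly one node one level up. Grouping the monomials of all non-root nodes by their parent then gives
\[
 F(u,v,w;z)=1+z\sum_{h,r,s,n}F_{h,r,s}(n)\,z^n\,M(h,r,s),
\]
where $M(h,r,s)$ is the sum of the monomials $u^{h'}v^{r'}w^{s'}$ over the children of a node with label $[h,r,s]$ and the prefactor $z$ accounts for the vertex added at each step (so that a child of a level-$n$ node sits at level $n+1$). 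Splitting $M(h,r,s)$ according to the five lines of the succession rule produces $F=1+z(\Psi_1+\Psi_2+\Psi_3+\Psi_4+\Psi_5)$, where $\Psi_i$ is the generating function contributed by the children produced by adding a vertex of type $(i)$.

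The computation of the $\Psi_i$ is exactly what is carried out in the five items above, so it remains to assemble them. Two tools recur. First, the finite geometric sum replaces each window $\sum_{i=a}^{b}v^i$ coming from the ranges in lines $(3)$--$(5)$ by $(v^a-v^{b+1})/(1-v)$, and similarly in $w$. Second, the resulting sums $\sum_{h,r,s,n}F_{h,r,s}(n)u^hv^{\alpha}w^{\beta}z^n$ with $\alpha,\beta$ depending on $(h,r,s)$ get rewritten as specializations of $F$: for instance $\sum F_{h,r,s}(n)u^hv^{h+1}w^sz^n=vF(uv,1,w)$ after summing over $r$, and $\sum F_{h,r,s}(n)u^hv^{\max(r-1,0)}w^sz^n=F(u,0,w)+v^{-1}\bigl(F(u,v,w)-F(u,0,w)\bigr)$ after separating $r=0$ from $r\ge1$. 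Applying these to the grouped cases in turn recovers the stated expressions for $\Psi_2$, $\Psi_1+\Psi_3$, $\Psi_4$ and $\Psi_5$.

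The one genuinely delicate piece of bookkeeping is the lower limits $\max\{0,r-1\}$ and $\max\{0,s-1\}$ appearing in lines $(3)$, $(4)$ and $(5)$, together with the constraints $h\ge r\ge0$ and $h\ge s\ge0$ that make the windows empty when $h=0$: the $\max$ forces a case split between $r=0$ and $r\ge1$ (respectively $s=0$ and $s\ge1$), and this is precisely what turns a single geometric sum into a two-term combination of specializations of $F$. For the closer, line $(5)$, the $v$-range and the $w$-range are independent, so $M(h,r,s)$ is here a \emph{product} of a $v$-window and a $w$-window; expanding this product into its four corner monomials and then performing both case splits yields the four-term inclusion--exclusion shape of $\Psi_5$, whose corners are $F(u,v,w)$, a term with the first argument scaled by $w$ or the second set to $0$, the symmetric term in $v$, and a doubly specialized term such as $F(uvw,1,1)$, $F(uw,0,1)$, $F(uv,1,0)$ or $F(u,0,0)$. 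Once this boundary bookkeeping is handled consistently, adding $1+z(\Psi_1+\Psi_2+\Psi_3+\Psi_4+\Psi_5)$ is routine algebra and delivers the functional equation in the statement; the main obstacle is simply keeping the boundary specializations in the closer contribution straight, which is why $\Psi_5$ is by far the bulkiest of the five terms.
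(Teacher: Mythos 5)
Your proposal is correct and follows essentially the same route as the paper: translating the succession rule of Theorem~\ref{thm:permgenscheme} into contributions $\Psi_1,\dots,\Psi_5$, absorbing the fixed-point case into the upper semi-transitory window, handling the $\max\{0,r-1\}$ and $\max\{0,s-1\}$ lower limits by splitting off $r=0$ (resp.\ $s=0$), and expanding the independent $v$- and $w$-windows of the closer into the four-term inclusion--exclusion expression via finite geometric sums and specializations such as $vF(uv,1,w)$ and $F(u,0,w)$. Your intermediate forms (e.g.\ writing the bottom endpoint as $F(u,0,w)+v^{-1}(F(u,v,w)-F(u,0,w))$) are algebraically equivalent to the paper's, so nothing further is needed.
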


We have found this equation useful to generate terms in the sequence, but we have
been unable to solve it, or to find an explicit expression for
$F_{0,0,0}(n)$, the number of $3$-nonnesting permutations.

\subsection{The general case: $k+1$-nonnesting permutations}
The construction in Section~\ref{sec:3perm} can easily be generalized to this case.
Recall that to each $k+1$-nonnesting open permutation diagram, we assign a label
$[h;\r;\s]=[h;r_1,r_2,\dots,r_{k-1};s_1,s_2,\dots,s_{k-1}]$.
To describe the succession rule of the corresponding generating tree,
we think of $[h,\r]$ as the label of the upper set partition, where we consider enhanced nestings (refer to Theorem \ref{thm:kepart}), and of $[h,\s]$
as the label of the lower set partition, where we consider usual nestings (see Theorem \ref{thm:kpart}).
We use~$\r-\vone$ as a shorthand for $r_1-1,r_2-1,\dots,r_{k-1}-1$,
and similarly for~$\s-\vone$. When the parameters $r_0$ and $s_0$ are
used below in $(3b), (4b),$ etc., they are defined to be equal to $h$.

\begin{theorem}\label{thm:kperm}
  Let~$\Sigma^{(k)}$ be the set of $k+1$-nonnesting open permutation diagrams. To each diagram~$\sigma$
  associate the label $\ell(\sigma)=[h;\r;\s]=[h;r_1,r_2,\dots,r_{k-1};s_1,s_2,\dots,s_{k-1}]$,
  where $2h$ is the number of semi-arcs, and $r_i$ (resp. $s_i$) is
  the number of open upper (resp. lower) semi-arcs of enhanced nesting index (resp. nesting index)
  greater than or equal to~$i$. Then the number of diagrams in $\Sigma^{(k)}$ of
  size $n$ is the number of nodes at level $n$ in the generating tree
  with root label $[0;\mathbf{0};\mathbf{0}]$, and succession rule given by
\begin{multline*}
[h;\r;\s]\longrightarrow\\
\quad\begin{array}{llr}
[h;h,r_2,\dots,r_{k-1};\s], &\text{\small (1)}\\[2mm]
[h+1;\r;\s], &\text{\small (2)} \\[2mm]
[h;\r-\vone;\s], \quad \text{if $r_{k-1}\ge1$}, &\text{\small (3a)}\\[2mm]
[h;r_1-\vone,\dots,r_{j-1}-1,i,r_{j+1},\dots,r_{k-1};\s], \quad
   \text{for $1\le j\le k-1$ and $r_j\le i\le r_{j-1}-1$}, &\text{\small (3b)}\\[2mm]
[h;\r;\s-\vone], \quad\text{if $s_{k-1}\ge1$}, &\text{\small (4a)}\\[2mm]
[h;\r;s_1-\vone,\dots,s_{\jmath-1}-1,\imath,s_{\jmath+1},\dots,s_{k-1}],
\quad\text{for $1\le \jmath\le k-1$ and $s_{\jmath}\le \imath\le s_{\jmath-1}-1$},
&\text{\small(4b)}\\[2mm]
[h-1;\r-\vone;\s-\vone],
\quad \text{if $r_{k-1}\ge1$ and $s_{k-1}\ge1$}, &\text{\small (5a)}\\[2mm]
[h-1;\r-\vone;s_1-1,\dots,s_{\jmath-1}-1,\imath,s_{\jmath+1},\dots,s_{k-1}],\\
\multicolumn{1}{r}{\text{if $r_{k-1}\ge1$, for $1\le \jmath\le k-1$ and $s_{\jmath}\le \imath\le s_{\jmath-1}-1$},}&\text{\small (5b)}\\[2mm]
[h-1;r_1-1,\dots,r_{j-1}-1,i,r_{j+1},\dots,r_{k-1};\s-\vone],\\
\multicolumn{1}{r}{\text{if $s_{k-1}\ge1$, for $1\le j\le k-1$ and $r_j\le i\le r_{j-1}-1$},}&\text{\small (5c)}\\[2mm]
[h-1;r_1-1,\dots,r_{j-1}-1,i,r_{j+1},\dots,r_{k-1};s_1-1,\dots,s_{\jmath-1}-1,\imath,s_{\jmath+1},\dots,s_{k-1}],\\
\multicolumn{1}{r}{\text{for $1\le \jmath\le k-1$ and $s_{\jmath}\le \imath\le s_{\jmath-1}-1$, and for $1\le j\le k-1$ and $r_j\le i\le r_{j-1}-1$.}}&\text{\small (5d)}\\[2mm]
\end{array}
\end{multline*}
\end{theorem}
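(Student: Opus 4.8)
The plan is to run the arguments of Theorems~\ref{thm:kpart} and~\ref{thm:kepart} on the two partition diagrams $\sigma^+$ and $\sigma^-$ attached to an open permutation diagram $\sigma$, simultaneously. First I would invoke the observations of Section~\ref{sec:gentreeperm}: for $\sigma$ with label $[h;\r;\s]$, the upper arcs and semi-arcs form an open partition diagram $\sigma^+$ with label $[h,\r]$ in the sense of Theorem~\ref{thm:kepart} (enhanced nestings), and the lower arcs and semi-arcs form $\sigma^-$ with label $[h,\s]$ in the sense of Theorem~\ref{thm:kpart} (usual nestings). By the definition of a $k+1$-nesting and a future $k+1$-nesting in a permutation diagram, every such pattern of $\sigma$ is either purely upper, and is then exactly a regular or future enhanced $k+1$-nesting of $\sigma^+$, or purely lower, and is then a regular or future $k+1$-nesting of $\sigma^-$; hence $\sigma\in\Sigma^{(k)}$ precisely when $\sigma^+$ avoids the patterns of Theorem~\ref{thm:kepart} and $\sigma^-$ avoids those of Theorem~\ref{thm:kpart}. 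I would also record that a future (enhanced or regular) $k+1$-nesting of any open diagram persists, as a future or regular $k+1$-nesting, in every child obtained by adding a vertex; consequently deleting vertex $n$ maps $\Sigma^{(k)}$ into itself, and since the empty diagram carries label $[0;\mathbf 0;\mathbf 0]$, the identification of size-$n$ diagrams with level-$n$ nodes will follow from the succession rule by the standard generating-tree induction.

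The core of the proof is the case analysis on the type of the new vertex $n$, reading off its effect on the pair $(\sigma^+,\sigma^-)$. The dictionary is: a fixed point of $\sigma$ (a loop) acts on $\sigma^+$ as the fixed-point move of Theorem~\ref{thm:kepart} and on $\sigma^-$ as the (inert) fixed-point move of Theorem~\ref{thm:kpart}; a semi-opener is a semi-opener of both; an upper semi-transitory is a semi-transitory of $\sigma^+$ and an inert fixed point of $\sigma^-$; a lower semi-transitory is an inert singleton of $\sigma^+$ (addressed below) and a semi-transitory of $\sigma^-$; and a closer is a closer of both, completing one available upper semi-arc and, independently, one available lower semi-arc. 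Putting these together: a loop applies rule~(1) of Theorem~\ref{thm:kepart} to $\r$ (raising $r_1$ to $h$) and fixes $\s$, which is line~(1); a semi-opener gives line~(2); an upper semi-transitory applies the semi-transitory branch of Theorem~\ref{thm:kepart} to $\r$ and fixes $\s$, producing lines~(3a) and~(3b); a lower semi-transitory does the mirror-image thing, producing~(4a) and~(4b); and a closer applies the closer branch of Theorem~\ref{thm:kepart} to $\r$ together with the closer branch of Theorem~\ref{thm:kpart} to $\s$, with the two choices independent, so the four combinations of ``close the outermost semi-arc of a future $k$-nesting'' versus ``close some other admissible semi-arc'' on the upper and lower sides yield exactly lines~(5a)--(5d). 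For $k=2$ this reduces to Theorem~\ref{thm:permgenscheme}, a useful consistency check.

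I expect the main obstacle to be the asymmetry between the \emph{enhanced} and \emph{usual} notions, which forbids simply quoting Theorem~\ref{thm:kepart} verbatim for $\sigma^+$. In a permutation diagram only genuine loops of $\sigma$ count as fixed points inside an innermost arc, so in $\sigma^+$ the vertices that can create future enhanced upper nestings are exactly the loops, not every singleton of $\sigma^+$. A lower semi-transitory is a singleton of $\sigma^+$ but is not a loop of $\sigma$, and thus --- in contrast to the loop case, where rule~(1) of Theorem~\ref{thm:kepart} applies verbatim because the loop lies to the right of the left endpoint of every current upper semi-arc and so bumps all $h$ enhanced indices to at least $1$ --- its addition must leave $\r$ unchanged, as recorded in lines~(4a),~(4b). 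So in practice I would re-run the short case analysis from the proof of Theorem~\ref{thm:kepart} with ``fixed point'' replaced by ``loop'', quoting from that proof only the part about how completing a semi-arc raises the nesting indices of the semi-arcs nesting around it, which is insensitive to the presence of fixed points and hence identical to the partition case. A secondary, routine point is to check that for a closer the choice of upper semi-arc to complete and the choice of lower semi-arc to complete are genuinely independent --- which holds because admissibility of an upper choice depends only on the upper arcs and that of a lower choice only on the lower arcs --- so that closer-children are counted by a product, accounting for the four families~(5a)--(5d).
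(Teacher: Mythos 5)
Your proposal is correct and follows essentially the same route as the paper: the paper's proof is simply the list of vertex types (fixed point, semi-opener, upper/lower semi-transitory, closer) whose additions produce the productions (1)--(5d), with the justification implicitly delegated to the partition analyses of Theorems~\ref{thm:kpart} and~\ref{thm:kepart} applied to the upper (enhanced) and lower (usual) diagrams $\sigma^+$ and $\sigma^-$ and combined independently for closers, exactly as you argue. Your explicit caveat that only genuine loops of $\sigma$, and not arbitrary singletons of $\sigma^+$ such as lower semi-transitories, can create enhanced upper nestings is a point the paper glosses over, and you handle it correctly and consistently with rules (1) and (4a)--(4b).
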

Note that for $k=2$, the generating tree in Theorem~\ref{thm:kperm} agrees with the generating tree defined in Thoerem \ref{thm:permgenscheme} for $\Sigma^{(2)}$.

\begin{proof} The labels correspond to the addition of the following vertices to a diagram $\sigma$:
\begin{itemize}
\item[$(1)$] a fixed point  (as in $(1)$ of Theorem \ref{thm:kepart});
\item[$(2)$] an opener (as in $(2)$ of Theorem \ref{thm:kepart} or $(2)$ of Theorem $\ref{thm:kpart}$);
\item[$(3a)$] an upper transitory closing the top semi-arc, if $\sigma$ has a future enhanced upper $k-1$-nesting (as in $(5)$ of Theorem \ref{thm:kepart});
\item[$(3b)$] an upper transitory (as in $(3)$ of Theorem \ref{thm:kepart});
\item[$(4a)$] a lower transitory closing the bottom semi-arc, if $\sigma$ has a future lower $k-1$-nesting ($(5)$ in Theorem \ref{thm:kpart});
\item[$(4b)$] a lower transitory (as in $(3)$ of Theorem \ref{thm:kpart});
\item[$(5a)$] a closer that closes both the top and the bottom semi-arcs, if $\sigma$ has both a future enhanced upper $k-1$-nesting and a future lower $k-1$-nesting;
\item[$(5b)$] a closer that closes the top semi-arc and a lower semi-arc that is not the bottom one, if $\sigma$ has a future enhanced upper $k-1$-nesting;
\item[$(5c)$]  a closer that closes the bottom semi-arc and an upper semi-arc that is not the top one,
if $\sigma$ has a future lower $k-1$-nesting;
\item[$(5d)$] a closer that closes an upper and a lower semi-arc, neither of which is an outermost one.
\end{itemize}
\end{proof}

The generating function for $(k+1)$-nonnesting open permutation diagrams, denoted by \[
F(u; v_1, v_2, \dots, v_{k-1}; w_1, w_2, \dots, w_{k-1}; z) = F(u, \mathbf{v}, \mathbf{w})
 = \sum_{h, \mathbf{r}, \mathbf{s}, n} F_{h,\mathbf{r},\mathbf{s}} (n) 
 			u^h \mathbf{v}^{\mathbf{r}} \mathbf{w}^{\mathbf{s}} z^n,
\]
 where $F_{h,\mathbf{r},\mathbf{s}} (n)$ is the number of diagrams of size $n$ with label $[ h; \mathbf{r}; \mathbf{s}]$ satisfying the functional equation:
\[
F(u, \mathbf{v}, \mathbf{w}) = 1 + z(\Phi_1 + \Phi_2 + \Phi_3 + \Phi_4 + \Phi_5)
\]
such that each $\Phi_i$ is the contribution for adding a vertex of type $(i)$ in Theorem~\ref{thm:kperm}. We compute each $\Phi_i$ next, following the development of the functional equation for $3$-nonnesting open permutation diagrams in Section~\ref{subsub:funeq}.

\begin{enumerate}
\item Fixed point. By extending the range of $i$ in case $(3)$ for upper transitories, the case $(1)$ in the succession rule  can alternatively be included. Thus it is simpler to compute $\Phi_1 + \Phi_3$ in item $(3)$ below.
\item Opener. $\Phi_2 = uF$.
\item Upper transitory and fixed point:
\[
\Phi_1 + \Phi_3 =  \frac{1}{v_1\dots v_{k-1}}\left(F-F|_{v_{k-1}=0}\right) + \sum_{j=1}^{k-1}  \frac{1}{v_1 \dots v_{j-1}(1-v_j)} 
\left(F-F|_{v_j=1,v_{j-1}=v_{j-1}v_j}\right) 
+F|_{v_1=1,u=uv_1}
\]
\item Lower transitory:
\[
\Phi_4 =\frac{1}{w_1\dots w_{k-1}}\left(F-F|_{w_{k-1}=0}\right) + \sum_{\jmath=1}^{k-1} \frac{1}{w_1\dots w_{\jmath-1}(1-w_{\jmath})}\left(F-F|_{w_{\jmath}=1,w_{\jmath-1}=w_{\jmath-1}w_{\jmath}}\right)
\]
\item Closer: 
\begin{multline*}
\Phi_5 = +\frac{1}{uv_1\dots v_{k-1}w_1\dots w_{k-1}}\left(F-F|_{v_{k-1}=0}-F|_{w_{k-1}=0}+F|_{v_{k-1}=w_{k-1}=0}\right)\\
+\frac{1}{uv_1\dots v_{k-1}}\sum_{\jmath=1}^{k-1}\frac{1}{w_1\dots w_{\jmath-1}(1-w_{\jmath})}\left(F-F|_{v_{k-1}=0}-F|_{w_{\jmath}=1,w_{\jmath-1}=w_{\jmath-1}w_{\jmath}}+F|_{v_{k-1}=0,w_{\jmath}=1,w_{\jmath-1}=w_{\jmath-1}w_{\jmath}}\right)\\
+\frac{1}{uw_1\dots w_{k-1}}\sum_{j=1}^{k-1}\frac{1}{v_1\dots v_{j-1}(1-v_{j})}\left(F-F|_{w_{k-1}=0}-F|_{v_{j}=1,v_{j-1}=v_{j-1}v_{j}}+F|_{w_{k-1}=0,v_{j}=1,v_{j-1}=v_{j-1}v_{j}}\right)\\
+\frac{1}{u}\sum_{j=1}^{k-1}\sum_{\jmath=1}^{k-1} \Biggl(
\frac{1}{v_1\dots v_{j-1}(1-v_{j})w_1\dots w_{\jmath-1}(1-w_{\jmath})} \\
\times \left(F-F|_{v_{j}=1,v_{j-1}=v_{j-1}v_{j}}
-F|_{w_{\jmath}=1,w_{\jmath-1}=w_{\jmath-1}w_{\jmath}}
+F|_{v_{j}=1,v_{j-1}=v_{j-1}v_{j},w_{\jmath}=1,
w_{\jmath-1}=w_{\jmath-1}w_{\jmath}}\right) \Biggr)
\end{multline*}

\end{enumerate}

Note that when $F$ is evaluated at $u=0$, namely disregarding diagrams with openers, we obtain a function of $z$ only. Thus $F|_{u=0}$ is the generating function for permutations avoiding $(k+1)$-nestings. Using the functional equation, we computed the first few terms for $(k+1)$-nonesting permutations as shown in Table~\ref{tab:k+1perm}.

\subsection{Enumerative data}
\begin{table}[htb]\small
\begin{tabular}{ccl}
$k+1$ & OEIS & Initial terms \\
3& A193938 & \bt{l} \textcolor{gray}{1, 2, 6, 24,} 118, 675, 4333, 30464, 230615, 1856336, 15738672, 139509303, 1285276242,\\
12248071935,  120255584181, 1212503440774, 12519867688928, 132079067871313 \et \\
4& A193935 & \bt{l} \textcolor{gray}{1, 2, 6, 24, 120, 720,} 5034, 40087, 356942, 3500551, 37343168, 428886219, 5257753614,\\
 68306562647, 934747457369, 13404687958473, 200554264435218,  3118638648191005 \et \\
5& A193936 & \bt{l} \textcolor{gray}{1, 2, 6, 24, 120, 720, 5040, 40320,} 362856, 3627385, 39864333, 477407104, 6183182389,\\
86033729930,  1278515941177,  20185987771091 \et \\
6& A193937 & \bt{l} \textcolor{gray}{1, 2, 6, 24, 120, 720, 5040, 40320, 362880, 3628800,} 39916680, 478991641, 6226516930,\\
 87157924751, 1306945300264 \et
\end{tabular}
\smallskip
\caption{Counting sequences for $k+1$-nonnesting permutations. Terms that coincide with $n!$ are gray.}
\label{tab:k+1perm}
\end{table}

In 2007, Corteel \cite{Cort07} gave a bijection between weighted
bicoloured Motzkin paths and permutations with $k$ weak exceedences,
$l$ crossings and $m$ nestings. Using continued fractions, she then
determined a generating function for such permutations. Note, however,
that in \cite{Cort07} permutations are parameterized by the total number
of crossings and nestings (that is, $2$-crossings and $2$-nestings),
instead of by sets of $k$ arcs mutually crossing or nesting.

We used the {\it gfun} package of {\sc Maple} (version 3.53) to try to
fit the counting sequences for $k$-nonnesting permutations (with $3\le
k\le 6$) into a differential equation using 80 terms, with no
success. Thus, we make the following conjecture.
\begin{conjecture}\label{conj:notDfinite}
The ordinary generating function for $k$-nonnesting permutations is
not D-finite for any $k>2$.
\end{conjecture}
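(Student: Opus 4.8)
Since the authors flag this as a conjecture supported only by a numerical \textit{gfun} fit on $80$ terms, any argument here is a research programme rather than a routine proof; I describe the route I would take. Write $a_k(n)$ for the number of $k$-nonnesting permutations of $\{1,\dots,n\}$ and $G_k(z)=\sum_n a_k(n)z^n$. The guiding principle is the rigidity of D-finite asymptotics: if $G_k$ were D-finite it would have only finitely many singularities, each a regular singular point of its defining ODE with algebraic local exponents, so $a_k(n)$ would be a finite $\mathbb C$-linear combination of terms $\rho^{-n}n^{\alpha}(\log n)^m$ with $\rho$ and $\alpha$ \emph{algebraic}; in particular the exponential growth rate $1/|\rho|$ would be algebraic and the subexponential exponent $\alpha$ would be algebraic. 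The plan is to contradict one of these. Note that, unlike the partition case where $3$-noncrossing partitions are D-finite by \cite{BoXi05}, here even the smallest case $k=3$ is conjectured non-D-finite, so the first target is $3$-nonnesting permutations, governed by the three-variable functional equation of Section~\ref{sec:permutations}.

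The route I would pursue first is asymptotic. I would recast the generating tree as a walk confined to a cone, paralleling the treatment of $k$-noncrossing/nonnesting \emph{partitions} in \cite{Chetal07,BoXi05,MiYe11}: the labels $[h;r_1,\dots;s_1,\dots]$ range over a cone $C\subset\mathbb Z^{d}$ (with $d$ growing linearly in $k$) cut out by the chains $h\ge r_1\ge\cdots\ge 0$ and $h\ge s_1\ge\cdots\ge 0$, and $a_k(n)$ counts excursions in $C$ from and to the apex. The key structural point, and the reason one expects non-D-finiteness, is that the transitions of Theorem~\ref{thm:permgenscheme} and its generalization to arbitrary $k$ are \emph{not} invariant under the reflection group of $C$: adding a fixed point or a semi-opener treats the upper and lower halves of the label asymmetrically, and closing a semi-arc resets a coordinate to an arbitrary value within a range. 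If, after the standard exponential (Cram\'er) tilt that recentres the walk, the effective cone is not a Weyl chamber of a finite reflection group, then the theory of random walks in cones (Denisov--Wachtel; Bostan--Raschel--Salvy in the lattice-path setting) gives $a_k(n)\sim K\mu_k^{\,n}n^{\alpha_k}$ with $\alpha_k$ governed by the principal Dirichlet eigenvalue of the spherical cap $C\cap S^{d-1}$; for such a cone $\alpha_k$ has the shape $\gamma/\arccos(\beta)$ with $\gamma$ rational and $\beta$ algebraic, hence rational or transcendental by transcendence theory, and one finishes by showing it is not rational, so not algebraic, contradicting D-finiteness. Carrying this out requires: (i) determining $\mu_k$ exactly as the spectral radius of the transfer operator on $C$; (ii) performing the tilt and identifying the recentred cone; (iii) computing its Dirichlet eigenvalue finely enough to certify non-rationality of $\alpha_k$.

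The main obstacle is that none of these steps is available off the shelf. The step set of the generating tree is \emph{unbounded} --- the number of children grows with the label, and a closing move can displace a coordinate an arbitrary distance --- so the Denisov--Wachtel local limit theorems, which assume light-tailed translation-invariant increments, do not apply directly; one would need a large-deviations and local-limit analysis tailored to this position-dependent, queueing-type dynamics, or a faithful re-encoding as a bounded-step model. Worse, one cannot presently solve or even substantially simplify the multivariate catalytic functional equation from Section~\ref{sec:permutations} --- the authors could not solve even the three-variable case --- so producing asymptotics precise enough to \emph{detect} non-D-finiteness is itself the crux. An alternative route is to show directly, from the functional equation, that the singularities of $G_k$ are dense on its circle of convergence (a natural boundary), for instance by an arithmetic argument controlling the $p$-adic valuations of $a_k(n)$, in the spirit of natural-boundary proofs for other lattice-path generating functions; but this again demands structural control over the functional equation that we currently lack. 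Either way, the gap to be bridged --- turning the many-variable kernel method of Section~\ref{sec:permutations} into sharp asymptotics or into an infinitude-of-singularities statement --- is precisely the gap that keeps Conjecture~\ref{conj:nonDfin} open for partitions, and for permutations it is at least as hard.
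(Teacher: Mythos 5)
There is no proof to compare against: the statement you were given is left as a \emph{conjecture} in the paper, supported only by the failure of a \textit{gfun} fit to $80$ terms of the counting sequences for $3\le k\le 6$, and your text is likewise, by your own admission, a research programme rather than an argument. So the honest verdict is that the proposal has a genuine gap --- indeed it consists entirely of gaps, which you at least identify candidly. Concretely: (i) the reduction of the generating tree of Section~\ref{sec:permutations} to an excursion problem in a cone is never carried out, and as you note the transitions are position-dependent with an unbounded number of children and long-range ``reset'' moves, so the Denisov--Wachtel / Bostan--Raschel--Salvy machinery you invoke does not apply as stated; (ii) no growth constant, no tilted cone, and no Dirichlet eigenvalue is ever computed, so there is nothing whose irrationality could be certified; (iii) the alternative natural-boundary route is mentioned but not begun. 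None of the steps that would turn ``plausibly non-D-finite'' into ``not D-finite'' is supplied, so the conjecture remains exactly as open after your text as before it.

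One technical correction to the framing, should you pursue this: D-finiteness by itself does not force $a_k(n)\sim K\mu^n n^{\alpha}(\log n)^m$ with algebraic $\alpha$ --- irregular singular points of the minimal ODE can contribute stretched-exponential factors such as $\exp(cn^{1/2})$ to coefficient asymptotics. The rigidity you want comes from G-function theory: since $G_k(z)$ has nonnegative integer coefficients and (granting a proof of at most exponential growth) is a G-function, the Andr\'e--Chudnovsky--Katz theorem forces its minimal operator to have only regular singularities with \emph{rational} exponents, which is the criterion actually used in the quarter-plane non-D-finiteness proofs; so the target should be irrationality, not merely non-algebraicity, of the critical exponent. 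Setting that up correctly does not close any of the gaps above, but it is the right contradiction to aim for, and it also clarifies why the analogous Conjecture~\ref{conj:nonDfin} for partitions has resisted the same strategy.
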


\section{Perspectives}
\label{sec:perspectives}

We conclude by mentioning some possible future directions of
research extending our work.
Among the conjectures that we have presented, Conjecture~\ref{conj:baxter} seems
to be the most accessible. A more challenging goal would be to find some general
techniques for solving the functional equations that we encounter.

Our work also opens some new options for addressing
Conjecture~\ref{conj:nonDfin}. By studying the refinement of
$k$-noncrossing set partitions by nesting number, an
interesting picture of the singularities of the generating function emerges. This might
be useful in showing that it is not D-finite.

Having found functional equations for the generating functions of
partitions and permutations avoiding $k$-nestings (which are
equinumerous to those avoiding $k$-crossings), a natural extension
would be to describe the distribution of the number of $k$-crossings
and $k$-nestings on partitions and permutations.  More generally,
given $i_1,i_2,\dots,i_r,j_1,j_2,\dots,j_r$, what is the number of
partitions (resp. permutations) of size $n$ with $i_k$ $k$-crossings
and $i_\ell$ $\ell$-nestings for $1\le k,\ell \le r$? Is this number
the same if the words ``crossings" and ``nestings" are switched?  A
reasonable first step towards describing this distribution would be to
study the number of $\ell$-nestings (for $\ell<k$) in $k$-nonnesting
partitions (resp. permutations). Perhaps probabilistic arguments could
be used to obtain information about the expected number of
$\ell$-nestings.
In a similar direction, it may be worth investigating whether a notion
of future $k$-crossings can be used to translate the $k$-noncrossing
condition on open partition diagrams into certain restrictions on some
appropriately defined labels, from where a generating tree for
$k$-noncrossing partitions could be constructed. This might allow one
to impose noncrossing and nonnesting restrictions simultaneously.

Beyond partitions and permutations, our setup is well suited to related combinatorial classes.
Arc diagrams can be used to express the intramolecular interactions of RNA
molecules, where nucleotides (vertices) can form hydrogen bonds (arcs)
which stabilize the structure. In these molecules, it is unlikely for
many mutually crossing hydrogen bonds to be formed. Such configurations correspond to
$k$-crossings.  Furthermore, it is highly
improbable for closely neighbouring nucleotides to form hydrogen bonds,
and it would be interesting to use our open diagram construction to
study $k$-nestings in a such restricted classes.  Similarly,
tangled diagrams are a combinatorial class consisting of arc diagrams
in which each vertex may have $0$, $1$, or $2$ incident arcs. These
can be used to represent all intramolecular bonds in RNA
\cite{ChQiRe08}.  Our method of generating trees for open diagrams can
exhaustively generate $3$-nonnesting tangled diagrams without
inflating vertices of degree $2$ or relying on enumerative results on
$k$-nonnesting matchings.

In addition, there are combinatorial objects which may seem unrelated at first, but which possess a similar recursive structure that can be exploited for exhaustive
generation. This is the case of Skolem sequences, which have been recently studied in~\cite{BuYe13} from this perspective. However, a label permitting the translation
of the generating tree into generating function equations is still not known in this case.

Finally, it is worth mentioning that the generating tree construction in all the combinatorial classes described above is a natural starting point for random generation
purposes.

\section*{Acknowledgements}
We are very grateful to Mogens Lemvig Hansen for his technical
assistance. We are also grateful to anonymous referees for their comments.

\bibliographystyle{plain}



\section{Appendix: Maple code}

\subsection{Maple code for generating sequences of $k$-nonnesting partitions
(including enhanced nestings).}
\begin{lstlisting}
RULE:=proc(label) option remember; #nops(label)=2(k-2)+1
local out, s, ss,i,j,k;
k:= nops(label);
s:= label; ss:= s - [1$k];

out:=
      #1. fixed point
      [s[1], s[1], op(s[3..k])],

      #2. opener
      [s[1]+1, op(s[2..k])];

      #3. transitory - top arc
      if s[k]>0 then
       out:= out, [s[1], op(ss[2..k])];
      fi;

      #4. transitory - other arcs
      for j from 1 to k-1 do
         for i from s[j+1] to s[j]-1 do
           out:= out, [s[1], op(ss[2..j]), i, op(s[j+2..k])];
         od;
      od;

      #5. closer - top arc
      if s[k]>0 then
       out:= out, [s[1]-1, op(ss[2..k])];
      fi;

      #6. closer - other arcs
      for j from 1 to k-1 do
         for i from s[j+1] to s[j]-1 do
           out:= out, [s[1]-1, op(ss[2..j]), i, op(s[j+2..k])];
         od;
      od;


return [out];
end proc:
\end{lstlisting}

For example \lstinline!RULE([1,0,0]);! yields
\[
                [[1, 1, 0], [2, 0, 0], [1, 0, 0], [0, 0, 0]]
\]

\begin{lstlisting}
termtolabeltoterm:= proc (term, N)
 option remember;local out;
   out:=RULE([seq(degree(term, x[i]), i=1..N)]);
subs(seq(x[i]=1, i=1..N), term)*add(mul(x[i]^out[j][i], i=1..N), j=1..nops(out))
end proc:
\end{lstlisting}
\begin{lstlisting}
nextlevel:=proc(l, N) option remember;
local i,out, L;
  out:=0;
  L:= convert(l, list);
  out:= add(termtolabeltoterm(L[i], N), i=1..nops(L));
  return out;
end:
\end{lstlisting}
\begin{lstlisting}
level:=proc(n, K) option remember;
    if n=0 then return 1
    else return nextlevel(level(n-1, K), K); fi;
end:
\end{lstlisting}
For example,
\begin{lstlisting}
seq(subs(seq(x[i]=0, i=1..5),level(n, 2)),n=0..16);
\end{lstlisting}

\subsection{Maple code for generating sequences of k-nonnesting permutations}
\begin{lstlisting}
RULE:=proc(label) option remember; #nops(label)=2(k-2)+1
local out, h,r, s, rr,ss,i,j, ii, jj,k;
#r is for top, s is for bottom

k:= (nops(label)-1)/2+1;
h:= label[1];
r:= label[2..k];       rr:= r - [1$k-1];
s:= label[k+1..nops(label)]; ss:= s - [1$k-1];

out:=
      #1. fixed point
      [h, h, op(r[2..k-1]), op(s)],

      #2. opener
      [h+1, op(r), op(s)];

      #3. upper transitory - top arc
      if r[k-1]>0 then
       out:= out, [h, op(rr), op(s)];
      fi;

      #4. upper transitory - other arcs
      for i from r[1] to h-1 do
           out:= out, [h, i, op(r[2..k-1]), op(s)];
      od;
      for j from 2 to k-1 do
         for i from r[j] to r[j-1]-1 do
           out:= out, [h, op(rr[1..j-1]), i, op(r[j+1..k-1]), op(s)];
         od;
      od;
      #5. upper transitory - top arc
      if s[k-1]>0 then
       out:= out, [h, op(r), op(ss)];

      fi;


      #6. lower transitory - other arcs
      for i from s[1] to h-1 do
           out:= out, [h, op(r), i, op(s[2..k-1])];
      od;
      for j from 2 to k-1 do
         for i from s[j] to s[j-1]-1 do
           out:= out, [h, op(r), op(ss[1..j-1]), i, op(s[j+1..k-1])];
         od;
      od;

      #7. closer - top and bottom arcs
      if r[k-1]>0 and s[k-1]>0 then
        out:= out, [h-1, op(rr), op(ss)];
      fi;


      #8 closer top arc + bottom others
      if r[k-1]>0 then
        for i from s[1] to h-1 do
           out:= out, [h-1, op(rr), i, op(s[2..k-1])];
         od;

        for j from 2 to k-1 do
         for i from s[j] to s[j-1]-1 do
           out:= out, [h-1, op(rr), op(ss[1..j-1]), i, op(s[j+1..k-1])];
         od;
        od;
      fi;

      #9 closer top others + bottom arc
      if s[k-1]>0 then
        #j=1
        for i from r[1] to h-1 do
           out:= out, [h-1, i, op(r[2..k-1]), op(ss)];
         od;
        #j=2
        for j from 2 to k-1 do
         for i from r[j] to r[j-1]-1 do
           out:= out, [h-1, op(rr[1..j-1]), i, op(r[j+1..k-1]), op(ss)];
         od;
        od;
      fi;


      #10 closer: others top + bottom
      #j=1
      for i from s[1] to h-1 do
        #jj=1
        for ii from r[1] to h-1 do
           out:= out, [h-1, ii,  op(r[2..k-1]),
                           i,  op(s[2..k-1])];
        od;

        #jj>1
        for jj from 2 to k-1 do
          for ii from r[jj] to r[jj-1]-1 do
           out:= out, [h-1, op(rr[1..jj-1]), ii, op(r[jj+1..k-1]),
                          i,  op(s[2..k-1])];

          od;
        od;
      od;

      #j>1
      for j from 2 to k-1 do
         for i from s[j] to s[j-1]-1 do
           #jj=1
           for ii from r[1] to h-1 do
           out:= out, [h-1,  ii, op(r[2..k-1]),
                          op(ss[1..j-1]),  i,  op(s[j+1..k-1])];

           od;

        #jj>1
        for jj from 2 to k-1 do
              for ii from r[jj] to r[jj-1]-1 do
              out:= out, [h-1, op(rr[1..jj-1]), ii, op(r[jj+1..k-1]),
                          op(ss[1..j-1]),  i,  op(s[j+1..k-1])];

         od;od;
      od;od;

return [out];
end proc:
\end{lstlisting}
\begin{lstlisting}
termtolabeltoterm:= proc (term, N)
 option remember;local out;
   out:=RULE([seq(degree(term, x[i]), i=1..N)]);
subs(seq(x[i]=1, i=1..N), term)*add(mul(x[i]^out[j][i], i=1..N), j=1..nops(out))
end proc:
\end{lstlisting}
\begin{lstlisting}
nextlevel:=proc(l, N) option remember;
local i,out, L;
  out:=0;
  L:= convert(l, list);
  out:= add(termtolabeltoterm(L[i], N), i=1..nops(L));
  return out;
end:
\end{lstlisting}
\begin{lstlisting}
level:=proc(n, K) option remember;
    if n=0 then return 1
    else return nextlevel(level(n-1, K),2*(K-1)-1); fi;
end:
\end{lstlisting}
For example,
\begin{lstlisting}
k:=5; # choose your desired k
seq(subs(seq(x[i]=0, i=1..5),level(n, k)),n=0..16);
\end{lstlisting}

\end{document}